\newtheorem{theorem}{Theorem}[section]
\newtheorem{proposition}[theorem]{Proposition}
\newtheorem{definition}[theorem]{Definition}
\newtheorem{lemma}[theorem]{Lemma}
\newtheorem{corollary}[theorem]{Corollary}
\declaretheoremstyle[
notefont=\bfseries, notebraces={}{},
bodyfont=\normalfont\itshape,
headformat=\NAME \NOTE
]{nopar}
\definecolor{puorange}{rgb}{0.80,0.20,0}
\definecolor{bluegray}{rgb}{0.04,0,0.7}
\definecolor{greengray}{rgb}{0.05,0.50,0.15}
\definecolor{darkbrown}{rgb}{0.40,0.2,0.05}
\definecolor{darkcyan}{rgb}{0,0.4,1}
\definecolor{black}{rgb}{0,0,0}
\definecolor{grey}{rgb}{0.93,0.93,0.93}
\newcommand{\reals}{{\mathbb R}}
\newcommand{\norm}[1] {\left \| #1 \right \|}
\newcommand{\idm}{\operatorname{I}}
\newcommand{\id}{\operatorname{I}}
\newcommand{\Tr}{\operatorname{\bf Tr}}
\newcommand{\Expect}{\operatorname{\mathbb E}}
\DeclareMathOperator*{\argmin}{arg\,min}
\newcommand{\bigO}{\mathcal{O}}
\newcommand{\red}[1]{\textcolor{puorange}{\bf #1}}
\def\shortdisplay{\setlength{\abovedisplayskip}{5pt}%
	\setlength{\belowdisplayskip}{5pt}%
	\setlength{\abovedisplayshortskip}{2pt}%
	\setlength{\belowdisplayshortskip}{2pt}}
\let\oldselectfont\selectfont
\def\selectfont{\oldselectfont\shortdisplay}
\newcommand{\horizon}{\tau}
\newcommand{\dyn}{\phi}
\newcommand{\Dyn}{\Phi}
\newcommand{\Ddyn}{\upphi}
\newcommand{\dimcmd}{\horizon p}
\newcommand{\stepsize}{\gamma}
\newcommand{\accstepsize}{\delta}
\newcommand{\valuefunc}{c}
\newcommand{\cst}{\mu}
\newcommand{\unknown}{\bar u}
\newcommand{\optimobj}{f}
\newcommand{\ctrlobj}{h}
\newcommand{\targetfunc}{\tilde x}
\newcommand{\regfunc}{g}
\title{Iterative Linearized Control: \\ Stable Algorithms and Complexity Guarantees}
\date{June 10th, 2019}
\author{
	Vincent Roulet$^1$ \qquad
	Siddhartha Srinivasa$^2$ \qquad
	Dmitriy Drusvyatskiy$^3$ \qquad
	Zaid Harchaoui$^1$
	\\ \\
	$^1$ Department of Statistics, University of Washington \\
	$^2$ Paul G. Allen School of Computer Science and Engineering, University of Washington \\
	$^3$ Department of Mathematics, University of Washington \\
	\small{\texttt{ \{vroulet, zaid, ddrusv\}@uw.edu, \{siddh\}@cs.uw.edu}}
}
\begin{document}
	\maketitle
\begin{abstract}
We examine popular gradient-based algorithms for nonlinear control in the light of the modern complexity analysis of first-order optimization algorithms. 
The examination reveals that the complexity bounds can be clearly stated in terms of calls to a computational oracle related to dynamic programming 
and implementable by gradient back-propagation using machine learning software libraries such as PyTorch or TensorFlow. Finally, we propose a regularized Gauss-Newton algorithm enjoying worst-case complexity bounds and improved convergence behavior in practice. The software library based on PyTorch is publicly available. 
\end{abstract}

\section*{Introduction}
Finite horizon discrete time nonlinear control has been studied for decades, with applications ranging from spacecraft dynamics to robot learning~\citep{Bell67,whittle1982optimization,Bert05}. 
Popular nonlinear control algorithms, such as differential dynamic programming or iterative linear quadratic Gaussian algorithms, 
are commonly derived using a linearization argument relating the nonlinear control problem to a linear control problem~\citep{Todo03, Li07}. 

We examine nonlinear control algorithms based on iterative linearization techniques through the lens of the modern complexity analysis of first-order optimization algorithms. We first reformulate the problem as the minimization of an objective that is written as a composition of functions. Owing to this reformulation, we can frame several popular nonlinear control algorithms as first-order optimization algorithms applied to this objective. 

We highlight the equivalence of dynamic programming and gradient back-propagation in this framework and underline the central role of the corresponding automatic differentiation oracle in the complexity analysis in terms of convergence to a stationary point of the objective. We show that the number of calls to this automatic differentiation oracle is the relevant complexity measure given the outreach of machine learning software libraries such as PyTorch or TensorFlow~\citep{Pyto17, Tens15}.

Along the way we propose several improvements to the iterative linear quadratic regulator (ILQR) algorithm, resulting in an accelerated regularized Gauss-Newton algorithm enjoying a complexity bound in terms of convergence to a stationary point and displaying stable convergence behavior in practice. Regularized Gauss-Newton algorithms give a template for the design of algorithms based on partial linearization with guaranteed convergence~\citep{Bjor96,Burk85, nesterov2007modified, MR3511391, Drus16}. The proposed accelerated regularized Gauss-Newton algorithm is based on a Gauss-Newton linearization step stabilized by a proximal regularization and boosted by a Catalyst extrapolation scheme, potentially accelerating convergence while preserving the worst-case guarantee. 

\paragraph{Related work.}
Differential dynamic programming (DDP) and iterative linearization algorithms are popular algorithms for finite horizon discrete time nonlinear control~\citep{Tass14}.
DDP is based on approximating the Bellman equation at the current trajectory in order to use standard dynamic programming. Up to our knowledge, the complexity analysis of DDP has been limited; see~\citep{Mayn66, Jaco70, Todo03} for classical analyses of DDP. 

Iterative linearization algorithms such as the iterative linear quadratic regulator (ILQR) or the iterative linearized Gaussian algorithm (ILQG) linearize the trajectory in order to use standard dynamic programming~\citep{Li04, Todo05, Li07}. Again, the complexity analysis of ILQR for instance has been limited. In this paper, we refer to the definitions of ILQR and ILQG as given in the original papers~\citep{Li04, Todo05, Li07}, the same names have been then used for variants of those algorithms that use a roll-out phase on the true trajectory as in, e.g., \citep{tassa2012synthesis} where line-searches were proposed. Line-searches akin to the Levenberg-Marquardt method were proposed but without convergence rates~\citep{Todo05}.
It is worthwhile to mention related approaches in the nonlinear model predictive control area~\citep{grune2017nonlinear, richter2012computational, dontchev2018inexact}.

We adopt the point of view of the complexity theory of first-order optimization algorithms. The computation of a Gauss-Newton step (or a Newton step) through dynamic programming for nonlinear control problems is classical; see~\citep{whittle1982optimization, Dunn89, Side05}. However, while the importance of the addition of a proximal term in Gauss-Newton algorithms is now well-understood~\citep{nesterov2007modified}, several popular nonlinear control algorithms involving such steps, such as ILQR, have not been revisited yet~\citep{Li04}. Our work shows how to make these improvements. 

We also show how gradient back-propagation, \textit{i.e.}, automatic differentiation~\citep{Grie08}, a popular technique usually derived using either a chain rule argument or a Lagrangian framework~\citep{Bert05,lecun1988theoretical}, allows one to solve the dynamic programming problems arising in linear quadratic control. Consequently, the subproblems that arise when using iterative linearization for nonlinear control can be solved with calls to an automatic differentiation oracle implementable in PyTorch or TensorFlow~\citep{Tens15, Pyto17, Kaka18}.

The regularized Gauss-Newton method was extensively studied to minimize the nonlinear least squares objectives arising in inverse problems~\citep{Bjor96, Noce06, Kalt08, Hans13}. The complexity-based viewpoint used in~\citep{nesterov2007modified, cartis2011evaluation, Drus16} informs our analysis and offers generalizations to locally Lipschitz objectives. We build upon these results in particular when equipping the proposed regularized Gauss-Newton algorithm with an extrapolation scheme in the spirit of~\citep{Paqu17}.

All notations are presented in Appendix~\ref{app:notations}. The code for this project is available at \url{https://github.com/vroulet/ilqc}.

\section{Discrete time control}\label{sec:opt_ctrl_setting}
We first present the framework of finite horizon discrete time nonlinear control.  

\paragraph{Exact dynamics.}
Given state variables $x\in \reals^d$ and control variables $u\in \reals^p$, we consider the control of finite trajectories $\bar x = (x_1;\ldots;x_\horizon) \in \reals^{\horizon d}$ of horizon $\horizon$ whose dynamics are controlled by a command $\bar u = (u_0;\ldots;u_{\horizon-1}) \in \reals^{\horizon p}$, through 
\begin{equation}\label{eq:dynamic}
x_{t+1} = \dyn_t(x_t,u_t), \qquad \mbox{for $t=0,\ldots,\horizon-1$,}
\end{equation}
starting from a given $\hat x_0 \in \reals^d$, where the functions $\dyn_t: \reals^d \times \reals^p \rightarrow \reals^d$ are assumed to be differentiable.

Optimality is measured through convex costs $h_t$, $g_t$, on the state and control variables $x_t$, $u_t$ respectively, defining the discrete time nonlinear control problem
\begin{align}
\label{eq:opt_control_pb}
\min_{\substack{x_0,\ldots,x_\horizon \in \reals^d\\ u_0,\ldots,u_{\horizon-1} \in \reals^{p}}} \quad & \sum_{t=1}^{\horizon} h_t(x_t) + \sum_{t=0}^{\horizon-1}g_t(u_t) \\
\mbox{subject to} \quad & x_{t+1} = \dyn_t(x_t,u_t), \qquad x_0 = \hat x_0, \nonumber
\end{align}
where, here and thereafter, the dynamics must be satisfied for $t=0,\ldots, \horizon-1$.
\paragraph{Noisy dynamics.}
The discrepancy between the model and the dynamics can be taken into account by considering noisy dynamics as
\begin{equation}\label{eq:noisydynamic}
x_{t+1} = \dyn_t(x_t, u_t, w_t),
\end{equation}
where $w_t \sim \mathcal{N}(0, \idm_q)$ for $t=0,\ldots,\horizon-1$.
The resulting discrete time control problem consists of optimizing the average cost under the noise  $\bar w = (w_0; \ldots; w_{\horizon-1})$ as
\begin{align}
\label{eq:noisy_opt_control_pb}
\begin{split}
\min_{\substack{x_0,\ldots,x_\horizon \in \reals^d\\ u_0,\ldots,u_{\horizon-1} \in \reals^{p}}} \quad & \Expect_{\bar w}\left[\sum_{t=1}^{\horizon} h_t(x_t)\right] + \sum_{t=0}^{\horizon-1}g_t(u_t) \\
\mbox{subject to} \quad & x_{t+1} = \dyn_t(x_t,u_t, w_t), \qquad  x_0 = \hat x_0.
\end{split}
\end{align}

\paragraph{Costs and penalties.}
The costs on the trajectory can be used to force the states to follow a given orbit $\hat x_1, \ldots, \hat x_\horizon$ as 
\begin{equation}\label{eq:quad_state_costs}
h_t(x_t) = \frac{1}{2} (x_t - \hat x_t)^\top Q_t (x_t - \hat x_t), \quad \mbox{with} \quad Q_t \succeq 0,
\end{equation}
which gives a quadratic tracking problem, while the regularization
penalties on the control variables are typically quadratic functions
\begin{equation}\label{eq:quad_control_costs}
g_t(u_t) = \frac{1}{2} u_t^\top R_t u_t, \quad \mbox{with} \quad R_t \succ 0.
\end{equation}
The regularization penalties can also encode constraints on the control variable such as the indicator function of a box
\begin{equation}\label{eq:constraint_costs}
g_t(u_t)  = \iota_{\{u : c_t^-  \leq u \leq  c_t^+ \}}(u_t), \quad \mbox{with} \quad c_t^-, c_t^+ \in \reals^p,
\end{equation}
where $\iota_S$ denotes the indicator function of a set $S$. 

\paragraph{Iterative Linear Control algorithms.}
We are interested in the complexity analysis of algorithms such as the iterative linear quadratic regulator (ILQR) algorithm as defined in~\citep{Li04,Todo05, Li07}, used for exact dynamics, which iteratively computes the solution of 
\begin{align}
\min_{\substack{y_0, \ldots y_\horizon \in \reals^d \\ v_0,\ldots, v_{\horizon-1} \in \reals^p} } &
\sum_{t=1}^{\horizon} q_{h_t}(x_t^{(k)} + y_t) + \sum_{t=0}^{\horizon-1} q_{g_t}(u_t^{(k)}+v_t) \label{eq:LQR_gauss_newton} \\
\mbox{\textup{subject to}} \quad & y_{t+1} = \ell_{\phi_t}(y_t, v_t), \qquad  y_0 = 0, \nonumber
\end{align}
where $\bar u^{(k)}$ is the current command, $\bar x^{(k)}$ is the corresponding trajectory given by~\eqref{eq:dynamic}, $q_{h_t}, q_{g_t}$ are quadratic approximations of the costs $h_t, g_t$ around respectively $x_t^{(k)}, u_t^{(k)}$ and  $\ell_{\phi_t}$ is the linearization of $\dyn_t$ around $(x_t^{(k)}, u_t^{(k)})$. The next iterate is then given by $\bar u^{ (k+1)} = \bar u^{(k)} + \alpha \bar v^*$ where $\bar v^*$ is the solution of~\eqref{eq:LQR_gauss_newton} and $\alpha$ is a step-size given by a line-search method. To understand this approach, we frame the problem as the minimization of a composition of functions.

Note that the term ILQR or ILQG has then been used to refer to a variant of the above algorithm that uses the feedback gains computed in the resolution of the linear control problem to control to move along the true trajectory, see~\citep{tassa2012synthesis}.

\paragraph{Formulation as a composite optimization problem.}
We call an optimization problem a \emph{composite optimization problem} if it consists in the minimization of a composition of functions.
For a fixed command $\bar u  \in\reals^{\horizon p}$, denote by $\tilde x(\bar u) = (\tilde x_1(\bar u); \ldots; \tilde x_\horizon(\bar u)) \in \reals^{\horizon d}$ the trajectory given by the exact dynamics, which reads 
\begin{align}\label{eq:traj_func}
\tilde x_1(\bar u) & = \dyn_0(\hat x_0, u_0), & 
\tilde x_{t+1}(\bar u) & = \dyn_t(\tilde x_t(\bar u),u_t).
\end{align}
Similarly denote by $\tilde x(\bar u, \bar w) \in \reals^{ \horizon d}$ the trajectory in the noisy case.
Denoting the total cost by $h(\bar x) = \sum_{t=1}^{\horizon} h_t(x_t)$, the total penalty by $g(\bar u) = \sum_{t=0}^{\horizon-1}g_t(u_t)$, the  control problem~\eqref{eq:opt_control_pb} with exact dynamics reads
\begin{align}\label{eq:composite_pb}
\min_{\bar u \in \reals^{\horizon p}} \quad & f(\bar u) \triangleq h(\tilde x(\bar u)) + g(\bar u),
\end{align}
and with noisy dynamics, 
\begin{align}\label{eq:averaged_composite_pb}
\min_{\bar u \in \reals^{\horizon p}} \quad & f (\bar u) \triangleq \Expect_{\bar w}\left[h(\tilde x(\bar u, \bar w ))\right] + g(\bar u),
\end{align}
\textit{i.e.}, we obtain a composite optimization problem whose structure can be exploited to derive oracles on the objective.

\section{Oracles in discrete time control}\label{sec:oracles_ctrl}
We adopt here the viewpoint of the complexity theory of first-order optimization. 
Given the composite problem~\eqref{eq:composite_pb}, what are the relevant oracles and what are the complexities of calls to these oracles? 
We first consider exact dynamics $\dyn_t$ of the form~\eqref{eq:dynamic} and unconstrained cost penalties such as~\eqref{eq:quad_control_costs}.

\subsection{Exact and unconstrained setting}
\paragraph{Model minimization.}
Each step of the optimization algorithm is defined by the minimization of a regularized model of the objective. 
For example, a \emph{gradient step} on a point $\bar u $ with step-size $\stepsize$ corresponds to linearizing both $h$ and $\tilde x$ and defining the linear model
\begin{equation*}
\ell_f(\bar u  + \bar v; \bar u) = \ell_h\left(\tilde x (\bar u) + \nabla \tilde x(\bar u)^\top\bar v; \tilde x(\bar u)\right) + \ell_g(\bar u + \bar v ; \bar u)
\end{equation*}
of the objective $f$, where $\ell_h(\bar x + \bar y; \bar x) = h(\bar x) + \nabla h(\bar x)^\top\bar y$ and $\ell_g(\bar u + \bar v; \bar u)$ is defined similarly.  
Then, this model with a proximal regularization is minimized in order to get the next iterate
\begin{align}\label{eq:grad_step}
\bar u^+ & = \bar u + \argmin_{\bar v \in \reals^{\horizon p}}\left\{\ell_f(\bar u + \bar v ; \bar u) + \frac{1}{2\gamma}\|\bar v\|_2^2\right\}.
\end{align}

Different models can be defined to better approximate the objective. For example, if only the mapping $\tilde x$ is linearized, this corresponds to defining the convex model at a point $\bar u$
\begin{equation}\label{eq:cvx_model}
c_f(\bar u + \bar v;\bar u) =  h\left(\tilde x(\bar u) + \nabla \tilde x(\bar u)^\top\bar v \right) +  g(\bar u + \bar v).
\end{equation}
We get then a \emph{regularized Gauss-Newton step} on a point $\bar u \in \reals^{\dimcmd}$ with step size $\stepsize > 0$ as
\begin{align}\label{eq:prox_linear_step}
\bar u^+ = \bar u + \argmin_{\bar v \in \reals^{\horizon p}} \left\{c_f(\bar u + \bar v;\bar u) +\frac{1}{2\gamma} \|\bar v\|_2^2\right\}.
\end{align} 

Although this model better approximates the objective, its minimization may be computationally expensive for general functions $h$ and $g$. 
We can use a quadratic approximation of $h$ around the current mapping $\tilde x (\bar u)$ and linearize the trajectory around $\bar u$ which defines the quadratic model 
\begin{equation}\label{eq:quad_model}
q_f(\bar u + \bar v ; \bar u) = q_h\left(\tilde x (\bar u) + \nabla \tilde x(\bar u)^\top \bar v; \tilde x(\bar u)\right) + q_g(\bar u + \bar v; \bar u),
\end{equation}
where $q_h(\bar x + \bar y; \bar x) \triangleq h(\bar x) + \nabla h(\bar x)^\top \bar y + \bar y^\top \nabla^2 h(\bar x) \bar y/2$ and  $q_g(\bar u + \bar v; \bar u)$ is defined similarly. 
A \emph{Levenberg-Marquardt step} with step-size $\gamma$ consists in minimizing the model~\eqref{eq:quad_model} with a proximal regularization
\begin{align}\label{eq:LM_step}
\bar u^+ = \bar u + \argmin_{\bar v \in \reals^{\horizon p}} \left\{ q_f(\bar u + \bar v;\bar u) +\frac{1}{2\gamma} \|\bar v\|_2^2\right\}.
\end{align} 

\paragraph{Model-minimization steps by linear optimal control.}
Though the chain rule gives an analytic form of the gradient, we can use the definition of a gradient step as an optimization sub-problem to understand its implementation.
Formally, the above steps~\eqref{eq:grad_step}, \eqref{eq:prox_linear_step}, \eqref{eq:LM_step}, define a model $m_f$ of the objective $f$  in~\eqref{eq:composite_pb} on a point $\bar u$, as
\[
f(\bar u + \bar v) \approx m_f(\bar u + \bar v; \bar u)=m_h\left(\tilde x (\bar u){+}\nabla \tilde x(\bar u)^\top \bar v; \tilde x(\bar u)\right) + m_g(\bar u + \bar v; \bar u),
\]
where $m_h = \sum_{t=1}^{\horizon} m_{h_t}$, $m_g = \sum_{t=0}^{\horizon-1} m_{g_t}$ are models of $h$ and $g$ respectively, composed of models on the individual variables. The model-minimization step with step-size $\stepsize$, 
\begin{equation}\label{eq:model_min}
\bar u^+ = \bar u + \argmin_{\bar v \in \reals^{\horizon p}} \left\{m_f(\bar u+ \bar v; \bar u)  + \frac{1}{2\gamma}\|\bar v\|_2^2\right\},
\end{equation} 
amounts then to a linear control problem as shown in the following proposition.

\begin{proposition}\label{prop:LQR_step}
	The model-minimization step~\eqref{eq:model_min} for control problem~\eqref{eq:opt_control_pb} written as~\eqref{eq:composite_pb} is given by $\bar u^+ = \bar u + \bar v^*$ where $\bar v^* = (v_0^*;\ldots, v_{\horizon-1}^*)$ is the solution of
	\begin{align}
	\min_{\substack{y_0, \ldots y_\horizon \in \reals^d \\ v_0,\ldots, v_{\horizon-1} \in \reals^p} } &
	\sum_{t=1}^{\horizon} m_{h_t}(x_t{ + }y_t; x_t)  + \sum_{t=0}^{\horizon-1}  m_{g_t}(u_t{+}v_t; u_t) + \frac{1}{2\stepsize}\|v_t\|_2^2 \nonumber
	\\
	\mbox{\textup{subject to}} \quad & y_{t+1} = \Dyn_{t,x}^\top y_t + \Dyn_{t,u}^\top v_t, \label{eq:lin_opt_ctrl} \qquad y_0 = 0, 
	\end{align}
	where $\Dyn_{t,x}{=}\nabla_x \dyn_t(x_t, u_t)$, $\Dyn_{t,u}{ =} \nabla_u \dyn_t(x_t, u_t)$ and $x_t = \tilde x_t(\bar u)$.
\end{proposition}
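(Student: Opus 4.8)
The plan is to exhibit a change of variables under which the model-minimization subproblem~\eqref{eq:model_min} becomes verbatim the linearized control problem~\eqref{eq:lin_opt_ctrl}. The new variables are the blocks $y_t$ of the linearized trajectory $\nabla \tilde x(\bar u)^\top \bar v \in \reals^{\horizon d}$, together with the blocks $v_t$ of $\bar v = (v_0;\ldots;v_{\horizon-1})$. There are three things to check: that these $y_t$ satisfy the linearized recursion $y_{t+1} = \Dyn_{t,x}^\top y_t + \Dyn_{t,u}^\top v_t$ with $y_0 = 0$; that this recursion parametrizes exactly the same feasible set as ranging $\bar v$ over $\reals^{\horizon p}$; and that the objective in~\eqref{eq:model_min} rewrites blockwise into the objective in~\eqref{eq:lin_opt_ctrl}.

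\emph{Step 1: chain rule on the trajectory map.} From the recursive definition~\eqref{eq:traj_func}, with the gradient convention of the paper, I differentiate $\tilde x_{t+1}(\bar u) = \dyn_t(\tilde x_t(\bar u), u_t)$ in $\bar u$. Writing $J_t = \nabla_{\bar u}\tilde x_t(\bar u)$, $\Dyn_{t,x} = \nabla_x \dyn_t(x_t,u_t)$, $\Dyn_{t,u} = \nabla_u \dyn_t(x_t,u_t)$ with $x_t = \tilde x_t(\bar u)$, and letting $S_t$ be the linear map extracting the block $u_t$ from $\bar u$ (so that $v_t = S_t \bar v$), the chain rule gives $J_{t+1} = J_t \Dyn_{t,x} + S_t^\top \Dyn_{t,u}$; the base case $\tilde x_1(\bar u) = \dyn_0(\hat x_0, u_0)$ yields $J_1 = S_0^\top \Dyn_{0,u}$ since $\hat x_0$ is fixed, consistent with $J_0 = 0$. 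Setting $y_t = J_t^\top \bar v$ and transposing, this becomes $y_{t+1} = \Dyn_{t,x}^\top y_t + \Dyn_{t,u}^\top v_t$ with $y_0 = 0$. This identity is the only computational step; the point requiring care is the bookkeeping of transpose/gradient conventions and of which coordinates of $\bar u$ enter the dynamics at stage $t$.

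\emph{Step 2: the feasible sets coincide.} For every $\bar v \in \reals^{\horizon p}$ the sequence $(y_1,\ldots,y_\horizon)$ from Step 1 is the unique solution of the affine recursion $y_{t+1} = \Dyn_{t,x}^\top y_t + \Dyn_{t,u}^\top v_t$, $y_0 = 0$; conversely, any $(y_t)$ solving that recursion arises in this way from $\bar v = (v_0;\ldots;v_{\horizon-1})$. Hence minimizing over $\bar v \in \reals^{\horizon p}$ in~\eqref{eq:model_min} is equivalent to minimizing over the pairs $(y,v)$ constrained by the linearized dynamics of~\eqref{eq:lin_opt_ctrl}, with the same minimizing $\bar v^*$.

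\emph{Step 3: rewriting the objective blockwise.} Since $m_h = \sum_{t=1}^{\horizon} m_{h_t}$ and $m_g = \sum_{t=0}^{\horizon-1} m_{g_t}$ act on the individual blocks, since the $t$-th block of $\tilde x(\bar u)$ is $x_t$ and the $t$-th block of $\nabla \tilde x(\bar u)^\top \bar v$ is $y_t$, and since $\|\bar v\|_2^2 = \sum_{t=0}^{\horizon-1}\|v_t\|_2^2$, substituting into~\eqref{eq:model_min} turns its objective into $\sum_{t=1}^{\horizon} m_{h_t}(x_t + y_t; x_t) + \sum_{t=0}^{\horizon-1}\left(m_{g_t}(u_t + v_t; u_t) + \frac{1}{2\stepsize}\|v_t\|_2^2\right)$, which is exactly the objective of~\eqref{eq:lin_opt_ctrl}. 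Together with Step 2, problems~\eqref{eq:model_min} and~\eqref{eq:lin_opt_ctrl} share the same minimizing $\bar v^*$, and the next iterate is $\bar u^+ = \bar u + \bar v^*$, as claimed. The main obstacle is Step 1; Steps 2 and 3 are essentially bookkeeping.
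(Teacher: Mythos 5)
Your proposal is correct and follows essentially the same route as the paper's proof: both differentiate the recursive definition of the trajectory map via the chain rule to show that the blocks $y_t = \nabla \tilde x_t(\bar u)^\top \bar v$ satisfy the linear recursion $y_{t+1} = \Dyn_{t,x}^\top y_t + \Dyn_{t,u}^\top v_t$ with $y_0=0$ (your $S_t^\top$ is exactly the paper's selector $F_t$), and then substitute blockwise into~\eqref{eq:model_min}. Your Step 2 merely makes explicit the equivalence of feasible sets that the paper leaves implicit in "plugging this in gives the result."
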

\begin{proof}
	Recall that the trajectory defined by $\bar u$ reads
	\begin{align*}
	\tilde x_1(\bar u) & = \dyn_0(\hat x_0, F_0^\top \bar u), &
	\tilde x_{t+1}(\bar u) & = \dyn_t(\tilde x_t(\bar u), F_t^\top \bar u),
	\end{align*}
	where $F_t = e_{t+1} \otimes \idm_p \in \reals^{\horizon p\times p} $, $e_t\in \reals^\horizon$ is the $t$\textsuperscript{th} canonical vector in $\reals^\horizon$, such that $F_t^\top \bar u = u_t$. The gradient reads $	\nabla \tilde x_1 (\bar u)  = F_0\nabla_u\dyn_0(x_0, u_0) $ followed by
	\begin{align*}
	\nabla \tilde x_{t+1}(\bar u) & = \nabla \tilde x_t(\bar u) \nabla_x \dyn_t(x_t, u_t) + F_t \nabla_u \dyn_t (x_t, u_t),
	\end{align*}
	where $x_t = \tilde x_t(\bar u)$ and $x_0= \hat x_0$. For a given $\bar v = (v_0; \ldots; v_{\tau-1})$, the product $\bar y = (y_1; \ldots; y_{\horizon}) = \nabla \tilde x (\bar u)^\top \bar v$ reads $y_1  = \nabla_u \dyn_0(x_0, u_0)^\top v_0$ followed by 
	\begin{align*}
	y_{t+1} & = \nabla_x \dyn_t(x_t, u_t)^\top y_t +\nabla_u \dyn_t (x_t, u_t)^\top v_t,
	\end{align*}
	where we used that $y_t =  \nabla \tilde x_t(\bar u) ^\top \bar v$. Plugging this into~\eqref{eq:model_min} gives the result.
\end{proof}

\paragraph{Dynamic programming.}
If the models used in~\eqref{eq:model_min} are linear or quadratic, the resulting linear control problems~\eqref{eq:lin_opt_ctrl} can be solved efficiently using dynamic programming, \textit{i.e.}, with a linear cost in $\horizon$, as presented in the following proposition. The cost is $\bigO(\horizon p^3 d^3)$. Details on the implementation for quadratic costs are provided in Appendix~\ref{app:dyn_prog}. 

Since the leading dimension of the discrete time control problem is the length of the trajectory $\horizon$, all of the above optimization steps have roughly the same cost. This means that, in discrete time control problems,~\emph{second order steps such as~\eqref{eq:LM_step} are roughly as expensive as gradient steps}.

\begin{proposition}
	Model-minimization steps of the form~\eqref{eq:model_min} for discrete time control problem~\eqref{eq:opt_control_pb} written as~\eqref{eq:composite_pb} with linear or quadratic convex models $m_h$  and  $m_g$ can be solved in linear time with respect to the length of the trajectory $\horizon$ by dynamic programming.
\end{proposition}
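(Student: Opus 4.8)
The plan is to reduce the step to a linear--quadratic optimal control problem and then solve that by a backward Bellman (Riccati) recursion. By Proposition~\ref{prop:LQR_step}, the model-minimization step~\eqref{eq:model_min} is equivalent to the linear control problem~\eqref{eq:lin_opt_ctrl}, whose cost is \emph{additive over the $\horizon$ stages} and whose dynamics $y_{t+1}=\Dyn_{t,x}^\top y_t + \Dyn_{t,u}^\top v_t$ are linear with fixed initial state $y_0=0$. When $m_h$ and $m_g$ are linear or quadratic convex models, the stage cost $y\mapsto m_{h_t}(x_t+y;x_t)$ is a convex quadratic in $y$, and the stage cost $v\mapsto m_{g_t}(u_t+v;u_t)+\tfrac{1}{2\stepsize}\|v\|_2^2$ is a convex quadratic in $v$ whose Hessian is positive definite thanks to the proximal term (this holds even when $m_{g_t}$ is merely linear, e.g.\ in a gradient step). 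So~\eqref{eq:lin_opt_ctrl} is a bona fide, well-posed LQR problem.

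First I would introduce the cost-to-go functions $V_t(y)$, equal to the optimal value of the tail of~\eqref{eq:lin_opt_ctrl} started from stage $t$ with $y_t = y$, and write the Bellman recursion
\[
V_\horizon(y) = m_{h_\horizon}(x_\horizon+y;x_\horizon),\qquad
V_t(y) = m_{h_t}(x_t+y;x_t) + \min_{v\in\reals^p}\Big\{ m_{g_t}(u_t+v;u_t) + \tfrac{1}{2\stepsize}\|v\|_2^2 + V_{t+1}\big(\Dyn_{t,x}^\top y + \Dyn_{t,u}^\top v\big)\Big\}.
\]
The core step is an induction on $t$ showing that each $V_t$ is a convex quadratic: the terminal $V_\horizon$ is, and if $V_{t+1}$ is convex quadratic then the bracketed expression is a convex quadratic in $(y,v)$ whose $v$-block is positive definite (again by the proximal term), so its partial minimization over $v$ is attained at an affine map $v_t^\star = K_t y + k_t$ and equals a convex quadratic in $y$ given in closed form by the corresponding Schur complement --- this is exactly the Riccati update, and I would record the gains $(K_t,k_t)$ during the backward sweep. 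A forward sweep from $y_0=0$, setting $v_t^\star = K_t y_t^\star + k_t$ and $y_{t+1}^\star = \Dyn_{t,x}^\top y_t^\star + \Dyn_{t,u}^\top v_t^\star$, then recovers the minimizer $\bar v^\star$, hence $\bar u^+ = \bar u + \bar v^\star$.

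For the complexity claim I would simply count operations: each backward step composes $V_{t+1}$ with the linear map, adds the stage costs, and solves one linear system of size at most $p$, which costs a fixed polynomial in $d$ and $p$; each forward step is a handful of matrix--vector products of the same order; summing over the $\horizon$ stages gives total running time $\bigO(\horizon)$ up to a factor polynomial in $d$ and $p$. Implementation details for the quadratic case are deferred to Appendix~\ref{app:dyn_prog}.

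The step I expect to be the main obstacle is justifying the partial minimization at each Bellman step: one must ensure the quadratic being minimized over $v$ is \emph{strictly} (not merely weakly) convex, so that the minimum is finite and attained and the propagated $V_t$ is a genuine quadratic rather than $-\infty$ or a non-smooth object. This is precisely where the proximal regularization $\tfrac{1}{2\stepsize}\|v\|_2^2$ is indispensable, and also where convexity of the cost models $m_{h_t}$ enters, guaranteeing that the Hessians carried along by the Schur-complement recursion stay positive semidefinite throughout.
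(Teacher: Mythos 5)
Your proposal is correct and follows essentially the same route as the paper: reduce the step to the linear--quadratic control problem via Proposition~\ref{prop:LQR_step}, run the backward Bellman recursion showing by induction that the cost-to-go functions remain convex quadratics (with the proximal term guaranteeing the $v$-minimization is well-posed, exactly as the paper's Proposition~\ref{prop:LQR_comput} assumes $G_{t,uu}\succ 0$ and propagates $C_{t,xx}\succeq 0$), then recover the minimizer by a forward roll-out, with per-stage cost depending only on $d$ and $p$. This matches the argument given in the main text and detailed in Appendix~\ref{app:dyn_prog}.
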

The proof of the proposition relies on the dynamic programming approach explained below.
The linear optimal control problem~\eqref{eq:lin_opt_ctrl} can be divided into smaller subproblems and then solved recursively. 
Consider the linear optimal control problem~\eqref{eq:lin_opt_ctrl} as
\begin{align}\label{eq:opt_ctrl_gen}
\min_{\substack{y_1, \ldots, y_{\horizon} \quad \\ v_0, \ldots, v_{\horizon-1}}} &
\sum_{t=1}^\horizon q_{h_t} (y_t) + \sum_{t=0}^{\horizon-1} q_{g_t} (v_t)\\
\mbox{subject to} \quad & y_{t +1} = \ell_{t}(y_{t}, v_{t}), \qquad  y_{0} = 0,\nonumber
\end{align}
where $\ell_t$ is a linear dynamic in state and control variables, $q_{g_t}$ are strongly convex quadratics and $q_{h_t}$ are convex quadratic or linear functions. 
For $0\leq t\leq \horizon$, given  $\hat y_t$, define the cost-to-go from $\hat y_t$, as the solution of 
\begin{align}\label{eq:cost-to-go}
\valuefunc_{t}(\hat y_t) = \underset{\substack{y_t,\ldots,y_\horizon\\v_t,\ldots,v_{\horizon-1}}}{\mbox{min}}\quad  &
  \sum_{t'=t}^\horizon q_{h_{t'}} (y_{t'})  + \sum_{t'=t}^{\horizon-1} q_{g_{t'}} (v_{t'})\\
\mbox{subject to} \quad &   y_{t'+1} = \ell_{t'}(y_{t'},v_{t'}), \quad \mbox{for $t'=t \ldots, \horizon-1$} \nonumber\\
&  y_t = \hat y_t.\nonumber
\end{align}
The cost-to-go functions can be computed recursively by the Bellman equation for $t\in \{\horizon-1,\ldots,0\}$,
\begin{equation}
\valuefunc_t(\hat y_t)=q_{h_{t}}(\hat y_t) +  \min_{v_t} \left\{ q_{g_{t}}(v_t) + \valuefunc_{t+1}(\ell_t(\hat y_t,v_t))\right\}  \label{eq:Bellman}
\end{equation}
solved for $ v^*_t(\hat y_t){ = }\argmin_{v_t} \left\{ q_{g_{t}}(v_t){ +} \valuefunc_{t+1}(\ell_t(\hat y_t,v_t))\right\}$. The final cost initializing the recursion is defined as $\valuefunc_\horizon(\hat y_\horizon) = q_{h_{\horizon}}(\hat y_\horizon)$. For quadratic costs and linear dynamics, the problems defined in~\eqref{eq:Bellman} are themselves quadratic problems that can be solved analytically to get an expression for $\valuefunc_{t}$.

The solution of~\eqref{eq:opt_ctrl_gen} is given by computing $\valuefunc_0(0)$, which amounts to iteratively solving the Bellman equations starting from $\hat y_0 = 0$. Formally, starting form $t=0$ and $\hat y_0 = 0$, it iteratively gets the optimal control $v_t^*$  at time $t$ defined by the analytic form of the cost-to-go function  and moves along the dynamics to get the corresponding optimal next state,
\begin{align}\label{eq:roll_out}
v^*_t &= v^*_t(y_t),  &
y_{t+1} &= \ell_t(y_t,v^*_t).
\end{align}
The cost of the overall dynamic procedure that involves a \emph{backward} pass to compute the cost-to-go functions and a \emph{roll-out} pass to compute the optimal controls is therefore linear in the length of the trajectory $\horizon$. The main costs lie in solving quadratic problems in the Bellman equation~\eqref{eq:Bellman} which only depend on the state and control dimensions $d$ and $p$.

\paragraph{Gradient back-propagation as dynamic programming.}
We illustrate the derivations for a gradient step in the following proposition that shows a cost of $\bigO(\horizon (p d + d^2))$. We recover the well-known gradient back-propagation algorithm used to compute the gradient of the objective. The dynamic programming viewpoint provides here a natural derivation.
\begin{proposition}\label{prop:grad}
	A gradient step~\eqref{eq:grad_step} for discrete time control problem~\eqref{eq:opt_control_pb} written as~\eqref{eq:composite_pb} and solved by dynamic programming amounts to
	\begin{enumerate}
		\item a \emph{forward} pass that computes the derivatives $\nabla_x \dyn_t(x_t, u_t)$, $\nabla_u \dyn_t(x_t, u_t)$, $\nabla h_t(x_t)$, $ \nabla g_t(u_t)$ for $t=0,\ldots, \horizon$ along the trajectory  given by $x_{t+1} = \dyn_t(x_t,u_t)$ for $t=0,,\ldots, \horizon-1$, 
		\item a \emph{backward} pass that computes linear cost-to-go functions as $\valuefunc_t(y_t) = \lambda_t^\top y_t + \mu_t$ where $\lambda_\horizon = \nabla h_\horizon (x_\horizon)$,
		{$\lambda_t = \nabla h_t(x_t) +  \nabla_x \dyn_t(x_t, u_t) \lambda_{t+1}$}, for $t=\horizon-1, \ldots 0$,
		\item a \emph{roll-out} pass that outputs 
		$ v^*_t =  - \gamma(\nabla_u \dyn_t(x_t, u_t) \lambda_{t+1} + \nabla g_t(u_t))$, for $t= 0,\ldots \horizon-1$.
	\end{enumerate}
\end{proposition}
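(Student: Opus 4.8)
The plan is to combine Proposition~\ref{prop:LQR_step} with the dynamic programming recursion~\eqref{eq:Bellman}, specialized to the linear models that define a gradient step. First, I would instantiate Proposition~\ref{prop:LQR_step} with $m_{h_t}(x_t + y_t; x_t) = h_t(x_t) + \nabla h_t(x_t)^\top y_t$ and $m_{g_t}(u_t + v_t; u_t) = g_t(u_t) + \nabla g_t(u_t)^\top v_t$, which are exactly the linear models used in~\eqref{eq:grad_step}. This shows that the gradient step reduces to the linear optimal control problem~\eqref{eq:lin_opt_ctrl} with these affine stage costs plus the proximal terms $\tfrac{1}{2\gamma}\|v_t\|_2^2$, and with linear dynamics $y_{t+1} = \Dyn_{t,x}^\top y_t + \Dyn_{t,u}^\top v_t$, $y_0 = 0$, where $\Dyn_{t,x} = \nabla_x\dyn_t(x_t,u_t)$, $\Dyn_{t,u} = \nabla_u\dyn_t(x_t,u_t)$ and $x_t = \tilde x_t(\bar u)$. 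Evaluating the trajectory $x_{t+1} = \dyn_t(x_t, u_t)$ and the derivatives $\nabla_x\dyn_t(x_t,u_t)$, $\nabla_u\dyn_t(x_t,u_t)$, $\nabla h_t(x_t)$, $\nabla g_t(u_t)$ along it is exactly the forward pass of item~(1).

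Next, I would run the Bellman recursion~\eqref{eq:Bellman} on this problem and prove by downward induction on $t$ that the cost-to-go functions are affine, $\valuefunc_t(\hat y_t) = \lambda_t^\top\hat y_t + \mu_t$. The base case $\valuefunc_\horizon(\hat y_\horizon) = h_\horizon(x_\horizon) + \nabla h_\horizon(x_\horizon)^\top\hat y_\horizon$ gives $\lambda_\horizon = \nabla h_\horizon(x_\horizon)$. For the inductive step, substituting the affine form of $\valuefunc_{t+1}$ and the linear dynamics into~\eqref{eq:Bellman} turns the inner problem into an unconstrained strongly convex quadratic in $v_t$ whose linear term is $(\nabla g_t(u_t) + \Dyn_{t,u}\lambda_{t+1})^\top v_t$; setting the gradient to zero yields the minimizer $v_t^*(\hat y_t) = -\gamma\big(\nabla_u\dyn_t(x_t,u_t)\lambda_{t+1} + \nabla g_t(u_t)\big)$, which does not depend on $\hat y_t$. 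This is precisely the roll-out formula of item~(3). Substituting $v_t^*$ back and collecting the terms linear in $\hat y_t$ — which come only from $\nabla h_t(x_t)^\top\hat y_t$ and from $\lambda_{t+1}^\top\Dyn_{t,x}^\top\hat y_t$, since $\valuefunc_{t+1}$ is affine and the linearized dynamics are additively separable in $(\hat y_t, v_t)$ — gives $\lambda_t = \nabla h_t(x_t) + \nabla_x\dyn_t(x_t,u_t)\lambda_{t+1}$, the backward pass of item~(2); the constant $\mu_t$ updates as well but is irrelevant for the step.

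Finally, I would invoke the solution procedure~\eqref{eq:roll_out}: starting from $\hat y_0 = 0$ and moving along the linearized dynamics, the optimal controls are $v_t^* = v_t^*(y_t) = -\gamma\big(\nabla_u\dyn_t(x_t,u_t)\lambda_{t+1} + \nabla g_t(u_t)\big)$ and $\bar u^+ = \bar u + \bar v^*$, matching items~(1)--(3). As a consistency check one can note that $\bar v^* = -\gamma\nabla f(\bar u)$ by the chain rule applied to~\eqref{eq:composite_pb}, so the recursion indeed recovers the classical gradient back-propagation formula. For the stated cost, I would tally the dominating operations in the backward and roll-out passes: per stage, the matrix-vector products $\nabla_x\dyn_t(x_t,u_t)\lambda_{t+1}$ and $\nabla_u\dyn_t(x_t,u_t)\lambda_{t+1}$ cost $\bigO(d^2)$ and $\bigO(pd)$ respectively, plus $\bigO(d+p)$ for the vector additions, giving $\bigO(\horizon(pd + d^2))$ over the $\horizon$ stages. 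The only delicate point is the induction step preserving the affine form, i.e. checking that the minimization over $v_t$ introduces no dependence on $\hat y_t$; this holds because $\valuefunc_{t+1}$ is affine and the dynamics linear and separable, so no real obstacle arises beyond this bookkeeping.
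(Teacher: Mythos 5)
Your proposal is correct and follows essentially the same route as the paper's proof: invoke Proposition~\ref{prop:LQR_step} with the linear models to obtain the linear control problem (forward pass), then show by downward induction via the Bellman recursion~\eqref{eq:Bellman} that the cost-to-go functions are affine with $\lambda_t = \nabla h_t(x_t) + \nabla_x\dyn_t(x_t,u_t)\lambda_{t+1}$ and that the minimizer $v_t^*$ is independent of the state. The added chain-rule consistency check and the operation count are sound extras not present in the paper's own argument.
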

\begin{proof}
	Recall that a gradient step is given as $ \bar u^+ = \bar u + \bar v^*$ where $\bar v^*$ is the solution of 
	\[
	\min_{\bar v \in \reals^{\horizon p}} \ell_h\left(\tilde x (\bar u) + \nabla \tilde x(\bar u)^\top\bar v; \tilde x(\bar u)\right) + \ell_g(\bar u + \bar v ; \bar u) + \frac{1}{2\stepsize}\|\bar v\|_2^2.
	\]
	where $\ell_h(\bar x + \bar y; \bar x) = h(\bar x) + \nabla h(\bar x)^\top\bar y$ and $\ell_g(\bar u + \bar v; \bar u)$ is defined similarly.  
	From Prop.~\ref{prop:LQR_step}, we get that it amounts to a linear optimal control problem of the form
	\begin{equation}\label{eq:linear_ctrl}
	\begin{split}
	\min_{\substack{y_{0}, \ldots, y_{\horizon} \\v_0, \ldots, v_{\horizon-1}}}\quad &
	\sum_{t=1}^{\horizon} y_t^\top a_t + \sum_{t=0}^{\horizon-1}v_t^\top b_t +  \frac{1}{2\stepsize}\sum_{t=0}^{\horizon-1} \|v_t\|^2\\
	\mbox{subject to} \quad & y_{t+1} = \Dyn_{t,x}^\top y_t + \Dyn_{t,u}^\top v_t,\qquad  y_0 = 0,
	\end{split}
	\end{equation}
	where  
	$a_t = \nabla h_t(x_t)$,
	$b_t = \nabla g_t(u_t)$,
	$\Dyn_{t,x} = \nabla_x \dyn_t(x_t , u_t)$, 
	$\Dyn_{t,u} = \nabla_u \dyn_t(x_t , u_t)$ and $x_t = \tilde x_t(\bar u)$. The definition of the linear problem is the \emph{forward} pass. 
	
	When solving~\eqref{eq:linear_ctrl} with dynamic programming, cost-to-go functions are linear, $\valuefunc_t (y) = \lambda_t^\top y + \cst_t$.
	Recursion starts with $\lambda_\horizon= a_\horizon$, $\cst_\horizon = 0$. Then, assuming $\valuefunc_{t+1} (y) =\lambda_{t+1}^\top y + \cst_{t+1}$ for $t \in \{\horizon-1, \ldots, 0\}$, we get 
	\begin{align}
	\valuefunc_t(y) & = a_t^\top y + \min_{v\in \reals^p} \left\{ b_t^\top v + \lambda_{t+1}^\top (\Dyn_{t,x}^\top y + \Dyn_{t,u}^\top v) + \frac{1}{2\stepsize}\|v\|_2^2 \right\} + \cst_{t+1} \label{eq:bellman_linear} \\
	& = (a_t + \Dyn_{t,x} \lambda_{t+1})^\top y + \cst_{t+1} + \frac{\gamma}{2}\|b_t+\Dyn_{t,u} \lambda_{t+1}\|_2^2, \nonumber
	\end{align}
	and so we identify $\lambda_t = a_t + \Dyn_{t,x} \lambda_{t+1}$ and $\cst_t =\cst_{t+1} + \frac{\gamma}{2}\|b_t+ \Dyn_{t,u}\lambda_{t+1}\|_2^2$ that define the cost-to-go function at time $t$. This defines the \emph{backward} pass.
	
	The optimal control variable at time $t$ is then independent of the starting state and reads from~\eqref{eq:bellman_linear}, 
	\[
	v^*_t = - \stepsize (\Dyn_{t,u} \lambda_{t+1} + b_t).
	\] 
	This defines the \emph{roll-out} pass.
\end{proof}

\subsection{Noisy or constrained settings}

\paragraph{Noisy dynamics.}
For inexact dynamics defining the problem~\eqref{eq:averaged_composite_pb}, we consider a Gaussian approximation of the linearized trajectory around the exact current trajectory. Formally, the Gaussian approximation of the random linearized trajectory
$
\ell_{\tilde x} (\bar u + \bar v; \bar u, \bar w) = \tilde x(\bar u, \bar w) + \nabla_{\bar u}\tilde x(\bar u, \bar w)^\top \bar v
$
around the exact linearized trajectory given for $\bar w= 0$ reads
\begin{align*}
\hat \ell_{\tilde x} (\bar u + \bar v; \bar u, \bar w) 
= & \tilde x(\bar u, 0) + \nabla_{\bar u}\tilde x(\bar u, 0)^\top \bar v + \nabla_{\bar w} \tilde x(\bar u, 0)^\top \bar w  + \nabla_{\bar u \bar w}^2 \tilde x(\bar u, 0)[\bar v, \bar w, \cdot],
\end{align*}
which satisfies 
$
\Expect_{\bar w}[\hat \ell_{\tilde x} (\bar u+\bar v; \bar u, \bar w)]  =\tilde x(\bar u,0)+\nabla_{\bar u}\tilde x(\bar u,0)^\top \bar v
$,   see Appendix~\ref{app:notations} for gradient and tensor notations.

The model we consider for the state cost is then of the form
\begin{align}\label{eq:approx_gaussian}
m_f(\bar u + \bar v; \bar u) = \Expect_{\bar w}\left[m_h\left(\hat \ell_{\tilde x}(\bar u + \bar v; \bar u, \bar w);  \tilde x(\bar u, 0)\right)\right] + m_g(\bar u  + \bar v; \bar u).
\end{align}
For simple dynamics $\phi_t$, their minimization with an additional proximal term amounts to a linear quadratic Gaussian control problem as stated in the following proposition.
\begin{proposition}
	Assume $\nabla^2_{xx} \phi_t$, $\nabla _{xw}^2 \phi_t$ and $\nabla_{ux}^2 \phi_t$ to be zero. The model minimization step~\eqref{eq:model_min} for model~\eqref{eq:approx_gaussian} is given by $\bar u^+ = \bar u+ \bar v^*$ where $\bar v^*$ is the solution of 
\begin{align}
	&\min_{\substack{\bar y, \bar v} } \quad  
	\sum_{t=1}^{\horizon} \Expect_{\bar w}\left[m_{h_t}(x_t + y_t; x_t)\right]  + \sum_{t=0}^{\horizon-1}  m_{g_t}(u_t+v_t; u_t) + \frac{1}{2\stepsize}\|v_t\|_2^2 \nonumber\\
	&\mbox{\textup{s.t.}} \quad  y_{t+1} = \Dyn_{t,x}^\top y_t + \Dyn_{t,u}^\top v_t + \Dyn_{t, w}^\top w_t  + \Ddyn_{t,u, w}[ v_t, w_t, \cdot], \nonumber\\
	&\phantom{\mbox{\textup{s.t}} \quad \quad \: } y_0 = 0,  \label{eq:LQG}
\end{align}
where $\Dyn_{t,x} = \nabla_x \dyn_t(x_t, u_t, 0)$, $\Dyn_{t,u} = \nabla_u \dyn_t(x_t, u_t, 0)$,  $\Dyn_{t,w} = \nabla_w \dyn_t(x_t, u_t, 0)$,  $\Ddyn_{t, u, w} =\nabla_{u w}^2 \dyn_t(x_t, u_t, 0) $, $x_t =  \tilde x_t(\bar u, 0)$.
\end{proposition}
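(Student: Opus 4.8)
The plan is to follow the proof of Proposition~\ref{prop:LQR_step}, replacing the first-order linearization of the trajectory by the Gaussian approximation $\hat\ell_{\tilde x}$ and carrying along the extra derivatives in $\bar w$. The core object is the deviation variable $\bar y = (y_1;\ldots;y_\horizon)$ with $y_t = \hat\ell_{\tilde x}(\bar u + \bar v; \bar u, \bar w)_t - \tilde x_t(\bar u, 0)$; I would show that, under the vanishing-Hessian assumptions, $\bar y$ obeys exactly the affine-bilinear recursion in the constraint of~\eqref{eq:LQG}. Substituting $\hat\ell_{\tilde x}(\bar u+\bar v;\bar u,\bar w)_t = x_t + y_t$ into the model~\eqref{eq:approx_gaussian} and then into the model-minimization step~\eqref{eq:model_min} then gives~\eqref{eq:LQG}, noting that $\bar w$ enters neither $m_g$ nor the proximal term, so the expectation $\Expect_{\bar w}[\cdot]$ survives only on the $m_{h_t}$ terms.

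First I would record the recursions for the derivatives of $\tilde x_t(\cdot,\cdot)$ at $\bar w = 0$ obtained by differentiating $\tilde x_{t+1}(\bar u,\bar w) = \dyn_t(\tilde x_t(\bar u,\bar w), u_t, w_t)$, with all derivatives of $\dyn_t$ evaluated at $(x_t, u_t, 0)$ where $x_t = \tilde x_t(\bar u,0)$. Using the selection matrices $F_t$ (with $F_t^\top\bar u = u_t$, as in the proof of Proposition~\ref{prop:LQR_step}) and the analogous $G_t$ with $G_t^\top\bar w = w_t$, this yields $\nabla_{\bar u}\tilde x_{t+1} = \nabla_{\bar u}\tilde x_t\,\Dyn_{t,x} + F_t\Dyn_{t,u}$ and $\nabla_{\bar w}\tilde x_{t+1} = \nabla_{\bar w}\tilde x_t\,\Dyn_{t,x} + G_t\Dyn_{t,w}$. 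Differentiating the first identity once more in $\bar w$ produces the recursion for the mixed tensor $T_t = \nabla^2_{\bar u \bar w}\tilde x_t$: the terms that would involve $\nabla^2_{xx}\dyn_t$ (through the $\bar w$-dependence of $\tilde x_t$), $\nabla^2_{xw}\dyn_t$, and $\nabla^2_{ux}\dyn_t$ all vanish by assumption, and the only surviving second-order contribution is the direct mixed derivative $\Ddyn_{t,u,w}$ together with the one propagated linearly by $\Dyn_{t,x}$. The base cases $\nabla_{\bar u}\tilde x_1 = F_0\Dyn_{0,u}$, $\nabla_{\bar w}\tilde x_1 = G_0\Dyn_{0,w}$, and $T_1$ given by the direct mixed derivative follow from $\tilde x_1 = \dyn_0(\hat x_0, u_0, w_0)$ with $\hat x_0$ fixed.

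The key step is the algebraic check that $\bar y$ solves the claimed dynamics. By definition of $\hat\ell_{\tilde x}$ and subtracting $\tilde x_{t+1}(\bar u,0)$ one gets $y_{t+1} = \nabla_{\bar u}\tilde x_{t+1}^\top\bar v + \nabla_{\bar w}\tilde x_{t+1}^\top\bar w + T_{t+1}[\bar v,\bar w,\cdot]$, and likewise $y_t = \nabla_{\bar u}\tilde x_t^\top\bar v + \nabla_{\bar w}\tilde x_t^\top\bar w + T_t[\bar v,\bar w,\cdot]$. Plugging in the three recursions above, using $F_t^\top\bar v = v_t$ and $G_t^\top\bar w = w_t$, and collecting the terms carrying $\Dyn_{t,x}^\top$, everything collapses to $y_{t+1} = \Dyn_{t,x}^\top y_t + \Dyn_{t,u}^\top v_t + \Dyn_{t,w}^\top w_t + \Ddyn_{t,u,w}[v_t,w_t,\cdot]$ with $y_0 = 0$, which is exactly the constraint of~\eqref{eq:LQG}; the objective then follows by substitution as described above.

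I expect the main obstacle to be the bookkeeping with the third-order object $T_t = \nabla^2_{\bar u \bar w}\tilde x_t$ and its contraction notation $[\bar v,\bar w,\cdot]$: one must verify that applying the chain rule to the gradient recursion produces precisely four second-order terms, three of which are the Hessians assumed to vanish, so that the recursion for $T_t$ stays linear and the whole subproblem remains a tractable linear quadratic Gaussian problem rather than acquiring higher-order couplings. A secondary care point, already flagged above, is keeping track of where the expectation acts: since $\hat\ell_{\tilde x}$ is affine in $\bar w$, the noise enters the state costs only through the linear term $\nabla_{\bar w}\tilde x_t^\top\bar w$ and the bilinear term $T_t[\bar v,\bar w,\cdot]$, consistent with $\Expect_{\bar w}[\cdot]$ appearing only on the $m_{h_t}$ terms in~\eqref{eq:LQG} while the control costs and proximal regularization remain deterministic.
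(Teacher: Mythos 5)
Your proposal is correct and follows essentially the same route as the paper's proof: both differentiate the trajectory recursion at $\bar w = 0$ to obtain the recursions for $\nabla_{\bar u}\tilde x_t$, $\nabla_{\bar w}\tilde x_t$ and the mixed tensor, use the vanishing-Hessian assumptions to kill the cross terms, and combine the three contributions into the single affine-bilinear recursion for $y_t$ before substituting into the model-minimization step. The only cosmetic difference is that the paper tracks the three slices $a_t, b_t, c_t$ separately and sums them at the end, whereas you work directly with their sum $y_t$; the algebra is identical.
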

\begin{proof}
	The Gaussian approximation $\hat \ell_{\tilde x} (\bar u + \bar v; \bar u, \bar w)$ can be decomposed as in Prop.~\ref{prop:LQR_step}. Recall that the trajectory reads
	\begin{align*}
	\tilde x_1(\bar u, \bar w) = \phi_0(\hat x_0, F_0\bar u, G_0\bar w), \qquad \tilde x_{t+1} = \phi_t(\tilde x_t(\bar u, \bar w), F_t\bar u, G_t\bar w),
	\end{align*}
	where $F_t = e_{t+1} \otimes \idm_{p}$, $G_t = e_{t+1} \otimes \idm_q$ and $e_t$ is the $t$\textsuperscript{th} canonical vector in $\reals^\horizon$, such that $F_t^\top \bar u = u_t$ and $G_t^\top \bar w  = w_t$.
	We have then 
	\begin{align}
	\nabla_{\bar u} \tilde x_1(\bar u, \bar w) = & F_0 \nabla_u\phi_0(\hat x_0, F_0^\top\bar u, G_0^\top\bar w) \nonumber \\
	\nabla_{\bar u} \tilde x_{t+1}(\bar u, \bar w) = & \nabla_{\bar u}\tilde x_t(\bar u, \bar w) \nabla_x\phi_t(\tilde x_t(\bar u, \bar w), F_t^\top\bar u, G_t^\top\bar w) + F_t \nabla_u\phi_t(\tilde x_t(\bar u, \bar w), F_t^\top\bar u, G_t^\top\bar w) \nonumber\\
	\nabla_{\bar w} \tilde x_1(\bar u, \bar w) = & G_0 \nabla_w\phi_0(\hat x_0, F_0^\top\bar u, G_0^\top\bar w) \\
	\nabla_{\bar w} \tilde x_{t+1}(\bar u, \bar w) = & \nabla_{\bar w}\tilde x_t(\bar u, \bar w) \nabla_x\phi_t(\tilde x_t(\bar u, \bar w), F_t^\top\bar u, G_t^\top\bar w) + G_t \nabla_w\phi_t(\tilde x_t(\bar u, \bar w), F_t^\top\bar u, G_t^\top\bar w) \nonumber
	\end{align}
	Finally denoting for clarity $\tilde x = \tilde x (\bar u, \bar w)$ and $\phi_t = \phi_t(\tilde x_t, u_t, w_t)$,
	\begin{align*}
	\nabla_{\bar u \bar w}^2 \tilde x_1 = &  \nabla_{uw}^2\phi_0[F_0^\top, G_0^\top, \cdot] \\
	\nabla_{\bar u \bar w}^2 \tilde x_{t+1} = &  \nabla_{uw}^2 \tilde x_{t}[ \cdot, \cdot, \nabla_x \phi_t] + \nabla^2_{xx} \phi_t[\nabla_{\bar u} \tilde x_t^\top, \nabla_{\bar w}\tilde x_t^\top, \cdot]  + \nabla _{xw}^2\phi_t[\nabla_{\bar u} \tilde x_t^\top, G_t^\top, \cdot] \\
	& + \nabla_{ux}^2\phi_t[F_t^\top, \nabla_{\bar w}\tilde x_t^\top, \cdot]  + \nabla_{ u  w}^2\phi_t[F_t^\top, G_t^\top, \cdot]
	\end{align*}
	Denote $a = \nabla_{\bar u}\tilde x(\bar u, 0)^\top \bar v$, $b = \nabla_{\bar w} \tilde x(\bar u, 0)^\top \bar w$ and $ c= \nabla_{\bar u \bar w}^2 \tilde x(\bar u, 0)[\bar v, \bar w, \cdot]$, with $a, b, c \in \reals^{\horizon d}$. Those can be decomposed as, e.g., $a = (a_1; \ldots; a_\horizon)$ with $a_t = \nabla_{\bar u}\tilde x_t(\bar u, 0)^\top \bar v$ and we denote similarly $b_t = \nabla_{\bar w} \tilde x_t(\bar u, 0)^\top \bar w$, $c_t = \nabla_{\bar u \bar w}^2 \tilde x_t(\bar u, 0)[\bar v, \bar w, \cdot]$ the decomposition of $b$ and $c$ in $\horizon$ slices.
	Assuming $\nabla^2_{xx} \phi_t$ $\nabla _{xw}^2 \phi_t$ and $\nabla_{ux}^2 \phi_t$ to be zero,	we get as in Prop.~\ref{prop:LQR_step},
	\begin{equation*}
	\begin{array}{lll}
	a_{1} \: \: \:\,  = \Dyn_{0,u}^\top v_0 & 	b_{1}\: \: \:\,  = \Dyn_{0, w}^\top w_0 &	c_1\: \: \:\,  = \Ddyn_{0, u, w}[v_0, w_0, \cdot] \\ 
	a_{t+1}  = \Dyn_{t, x}^\top a_t + \Dyn_{t, u}^\top v_t & 	b_{t+1}  = \Dyn_{t, x}^\top b_t + \Dyn_{t, w}^\top w_t & 	c_{t+1}  =  \Dyn_{t, x}^\top c_t + \Ddyn_{t, u, w}[v_t, w_t, \cdot]
	\end{array}
	\end{equation*}
	where $\Dyn_{t, x} = \nabla_x \dyn_t(x_t, u_t, 0)$, $\Dyn_{t, u} = \nabla_u \dyn_t(x_t, u_t, 0)$, $\Dyn_{t, w} = \nabla_w \dyn_t(x_t, u_t, 0)$, $\Ddyn_{t, u,w} = \nabla_{uw}^2 \dyn_t(x_t, u_t, 0)$ and $x_t = \tilde x_t(\bar u, 0)$.
	Therefore the variable $y = a + b + c = \nabla_{\bar u}\tilde x(\bar u, 0)^\top \bar v + \nabla_{\bar w} \tilde x(\bar u, 0)^\top \bar w + \nabla_{\bar u \bar w}^2 \tilde x(\bar u, 0)[\bar v, \bar w, \cdot]$ decomposed as $y = (y_1; \ldots; y_\horizon)$ satisfies
	\begin{align*}
	y_1 &  = \Dyn_{0, u}^\top v_0 +  \Dyn_{0, w}^\top w_0 +  \Ddyn_{0, u, w}[v_0, w_0, \cdot] \\
	y_{t+1}& = \Dyn_{t, x}^\top y_t + \Dyn_{t, u}^\top v_t  + \Dyn_{t, w}^\top w_t + \Ddyn_{t, u, w}[v_t, w_t, \cdot].
	\end{align*}
	Plugging this in the model-minimization step gives the result.
\end{proof}
The linear control problem~\eqref{eq:LQG} can again be solved by dynamic programming  by modifying the Bellman equation~\eqref{eq:Bellman} in the backward pass, i.e., by solving analytically for white noise $w_t$,
\begin{align*}
\valuefunc_t(\hat y_t) = m_{h_t}(\hat y_t) + \min_{v_t}\left\{ m_{g_t}(v_t) + \Expect_{w_t}\left[ \valuefunc_{t+1}(\Dyn_{t,x}^\top \hat y_t + \Dyn_{t,u}^\top v_t + \Dyn_{t, w}^\top w_t  + \Ddyn_{t,u, w}[ v_t, w_t, \cdot])\right]\right\}.
\end{align*}
The complete resolution for quadratics is provided in Appendix~\ref{app:compa_ILQR}.
\paragraph{Dealing with constraints.}
For constrained control problems with exact dynamics, the model-minimization steps will amount to linear control problems under constraints, which cannot be solved directly by dynamic programming. However their resolution by an interior point method boils down to solving linear quadratic control problems each of which has a low computational cost as shown before. 

Formally, the resulting subproblems we are interested in are linear quadratic control problems under constraints of the form 
\begin{align}
\label{eq:opt_control_pb_constrained}
\min_{\substack{y_0,\ldots,y_\horizon \in \reals^d\\ v_0,\ldots,v_{\horizon-1} \in \reals^{p}}} \quad & \sum_{t=1}^{\horizon} q_{h_t}(y_t) + \sum_{t=0}^{\horizon-1} q_{g_t}(v_t) \\
\mbox{subject to} \quad & y_{t+1} = \ell_t(y_t,v_t), \quad y_0 = 0, \quad v_t \in \mathcal{U}_t \nonumber,
\end{align}
where $\mathcal{U}_t= \{u: C_t u\leq d_t\}$, $q_{h_t}$ are convex quadratics, $q_{g_t}$ are strongly convex quadratics and $\ell_t$ are linear dynamics. Interior point methods introduce a log-barrier function  $\mathcal{B}_{t}(u)  = \log(d_t - C_tu)$  and minimize 
\begin{align*}
& \min_{\substack{y_0,\ldots,y_\horizon \in \reals^d\\ v_0,\ldots,v_{\horizon-1} \in \reals^{p}}} \quad  \sum_{t=1}^{\horizon} q_{h_t}(y_t)  + \sum_{t=0}^{\horizon-1} q_{g_t}(v_t) + \mu_k \mathcal{B}_{t}(v_t) \\
& \mbox{subject to} \quad  y_{t+1} = \ell_t(y_t,v_t), \qquad  y_0 = 0,
\end{align*}
where $\mu_k$ increases along the iterates $k$ of the interior point method. 
We leave the exploration of constrained problems to future work.

\section{Automatic-differentiation oracle}\label{sec:auto_diff}
The iterative composition structure we studied so far appears not only in control but more generally in optimization problems that involve successive transformations of a given input as for example in
\begin{equation}\label{eq:compo_illus}
\min_{u_0,\ldots, u_{\horizon-1}}  h(\phi_{\horizon-1}(\phi_{\horizon-2}(\ldots\phi_0(\hat x_0, u_0) \ldots,u_{\horizon-2}),u_{\horizon-1})).
\end{equation}
The identification of such structures led to the development of efficient \emph{automatic-differentiation} software libraries able to compute gradients in any graph of computations both in CPUs and GPUs. We present then implementations and complexities of the optimization methods presented before where automatic-differentiation is the computational bottleneck.

\paragraph{Functions and problem definition.}
We first recall the definition of decomposable functions along the trajectories.
\begin{definition}
	A function $f:\reals^{\horizon d}\rightarrow \reals^{\horizon d'}$ is a \emph{multivariate $\tau$-decomposable function} if it is composed of $\tau$  functions $f_t : \reals^d \rightarrow \reals^{d'}$ such that for $\bar x = (x_1;\ldots; x_{\horizon}) \in \reals^{\horizon d}$, we have $f(\bar x) = (f_1(x_1); \ldots; f_\horizon(x_\horizon)) \in \reals^{\horizon d'}$. 
	
	A function $f:\reals^{\horizon d}\rightarrow \reals$ is a \emph{real $\tau$-decomposable function} if it is composed of $\tau$  functions $f_t : \reals^d \rightarrow \reals$ such that for $\bar x = (x_1;\ldots; x_{\horizon}) \in \reals^{\horizon d}$, we have $f(\bar x) = \sum_{t=1}^\horizon f_t(x_t)$. 
	
	We denote by $\mathcal{D}^\horizon(\reals^{\horizon d}, \reals^{\horizon d'})$ and $\mathcal{D}^\horizon(\reals^{\horizon d})$ the sets of multivariate and real, respectively, $\tau$-decomposable functions whose components $f_t$ are differentiable.
\end{definition}
For a given decomposable function $f \in \mathcal{D}^\horizon(\reals^{\horizon d},\reals^{\horizon d'})$ and  a point $\bar z \in \reals^{\horizon d'}$, the gradient-vector product reads $\nabla f(\bar x) \bar z = (\nabla f_1(x_1) z_1; \ldots; \nabla f_{\horizon} (x_\horizon) z_\horizon ) \in \reals^{\horizon d}$, i.e., it can be computed directly from the components defining $f$. Similarly, the convex conjugate of a real decomposable function $f \in \mathcal{D}^\horizon(\reals^{\horizon d})$ is directly given by the convex conjugate of its components.

We formalize now the class of trajectory functions.
\begin{definition}[Trajectory function]
	A function $\tilde x: \reals^{\horizon p} \rightarrow \reals^{\horizon d}$ is a \emph{trajectory function of horizon $\horizon$} if it is defined by an input $\hat x_0 \in \reals^d$ and $\horizon$ compositions of functions $\dyn_t : \reals^d \times \reals^p \rightarrow \reals^d$ such that for $\bar u = (u_0; \ldots; u_{\horizon-1}) \in \reals^{\horizon p}$, we have $\tilde x(\bar u) = (\tilde x_1(\bar u); \ldots; \tilde x_\horizon(\bar u))$ defined by
	\begin{align*}
	\tilde x_1(\bar u) = \dyn_0(\hat x_0, u_0), \quad 
	\tilde x_{t+1}(\bar u) = \dyn_t(\tilde x_t(\bar u),u_t), \qquad \mbox{for $t=1,\ldots, \horizon-1$.}
	\end{align*}
	
	We denote by $\mathcal{T}^\horizon(\reals^{\horizon p}, \reals^{\horizon d} )$ the set of trajectory functions of horizon $\horizon$ whose dynamics $\phi_t$ are differentiable.
\end{definition}
As presented in Section~\ref{sec:oracles_ctrl}, the gradient back-propagation is divided in two main phases: (i) the forward pass that computes and store the gradients of the dynamics along the trajectory given by a command, (ii) the backward and roll-out passes that compute the gradient of the objective given the gradients of the costs and penalties along the trajectory. We can decouple the two phases by computing and storing once and for all the gradients of the dynamics along the trajectory, then making calls to the backward and roll-out passes for any dual inputs, i.e., not restricting ourselves to the gradients of the costs and penalties along the trajectories.

Formally, given $\tilde x \in \mathcal{T}^\horizon(\reals^{\horizon p}, \reals^{\horizon d})$ and $\bar u \in \reals^{\horizon p}$, we use that, once $\tilde x(\bar u)$ is computed and the successive gradients are stored, any gradient vector product of the form  $\nabla \tilde x(\bar u) \bar z$ for $\bar z \in \reals^{\horizon d}$  can be computed in linear time with respect to $\horizon$ by a dynamic programming procedure (specifically an automatic-differentiation software) that solves $\min_{\bar v \in \reals^{\horizon p}} -\bar z^\top \nabla \tilde x(\bar u) ^\top \bar v + \frac{1}{2}\|\bar v\|_2^2$. The main difference with classical optimization oracles is that \emph{we do not compute or store the gradient $\nabla \tilde x(\bar u) \in \reals^{\horizon p \times \horizon d}$ but yet have access to gradient-vector products $\bar z \rightarrow \nabla \tilde x(\bar u) \bar z$}. This lead us to define oracles for trajectory functions as calls to an automatic-differentiation procedure as follows. 
\begin{definition}[Automatic-differentiation oracle]
	An \emph{automatic-differentiation oracle} is any procedure that, given $\tilde x \in \mathcal{T}^\horizon(\reals^{\horizon p}, \reals^{\horizon d})$ and  $\bar u \in \reals^{\horizon p}$, computes 	
	\[
	\bar z \rightarrow \nabla \tilde x(\bar u)\bar z \quad \mbox{for any $\bar z \in \reals^{\horizon d}$}.
	\]
\end{definition}
Derivatives of the gradient vector product can then be computed themselves by back-propagation as recalled in the following lemma. 
\begin{lemma}\label{lem:jac_vec_product}
	Given a trajectory function $\tilde x \in \mathcal{T}^\horizon(\reals^{\horizon p}, \reals^{\horizon d})$, a command  $\bar u  \in \reals^{\horizon p}$ and a real decomposable function $f \in \mathcal{D}^\horizon(\reals^{\horizon p})$, the derivative of $\bar z \rightarrow f(\nabla \tilde x(\bar u)\bar z)$ requires two calls to an automatic-differentiation procedure.
\end{lemma}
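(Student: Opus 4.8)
The plan is to differentiate the composite map $\psi\colon \bar z \mapsto f(\nabla\tilde x(\bar u)\bar z)$ with the chain rule and then account for the cost of each factor that appears. Since $\bar u$ is held fixed, $\nabla\tilde x(\bar u)\in\reals^{\horizon p\times\horizon d}$ is a constant matrix, so $\bar z\mapsto\nabla\tilde x(\bar u)\bar z$ is linear (hence smooth); as $f$ is differentiable by the definition of $\mathcal{D}^\horizon(\reals^{\horizon p})$, the map $\psi$ is differentiable with
\[
\nabla\psi(\bar z) \;=\; \nabla\tilde x(\bar u)^\top\, \nabla f\!\left(\nabla\tilde x(\bar u)\bar z\right).
\]
It then suffices to evaluate this right-hand side, which decomposes into three operations: the gradient-vector product $\bar w\triangleq\nabla\tilde x(\bar u)\bar z$, the gradient $\nabla f(\bar w)$, and the application of $\nabla\tilde x(\bar u)^\top$ to $\bar g\triangleq\nabla f(\bar w)$.

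First I would compute $\bar w=\nabla\tilde x(\bar u)\bar z$ with one call to the automatic-differentiation oracle, which by definition returns exactly this vector (concretely, the backward and roll-out recursions of Proposition~\ref{prop:grad} run with the dual input $\bar z$ in place of the cost gradients, at cost $\bigO(\horizon(pd+d^2))$). Next, because $f$ is a real $\tau$-decomposable function, $f(\bar w)=\sum_{t=1}^{\horizon} f_t(w_t)$, so $\nabla f(\bar w)=(\nabla f_1(w_1);\ldots;\nabla f_\horizon(w_\horizon))$ is read off directly from the components $f_t$, without consuming an automatic-differentiation call.

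It remains to apply $\nabla\tilde x(\bar u)^\top$ to $\bar g=\nabla f(\bar w)$. The key observation is that this ``transpose'' (Jacobian-vector) product is itself a derivative: for any $\bar z'\in\reals^{\horizon d}$ the scalar $\chi(\bar z')\triangleq \bar g^\top\!\left(\nabla\tilde x(\bar u)\bar z'\right)$ is affine in $\bar z'$ with constant gradient $\nabla\chi(\bar z')=\nabla\tilde x(\bar u)^\top\bar g$. Now $\chi$ is computed by one call to the automatic-differentiation oracle (producing $\nabla\tilde x(\bar u)\bar z'$) followed by an inner product with the constant $\bar g$, so its gradient --- and therefore $\nabla\tilde x(\bar u)^\top\bar g$, and with it $\nabla\psi(\bar z)$ --- is obtained by back-propagating once through that computation, i.e. by one additional automatic-differentiation call (a reverse-over-reverse, or ``double back-propagation'', pass, as obtained by nesting the backward operator of PyTorch or TensorFlow). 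Summing the two passes proves the claim.

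The only point that requires care is the accounting of what constitutes a single ``call'': one has to observe that the procedure realizing $\bar z'\mapsto\nabla\tilde x(\bar u)\bar z'$ is a differentiable computational graph built from additions and multiplications by the stored Jacobians $\Dyn_{t,x},\Dyn_{t,u}$ and is linear in $\bar z'$, so reverse-mode differentiation through it is again a single sweep of cost $\bigO(\horizon(pd+d^2))$, and that, by $\tau$-decomposability of $f$, forming $\nabla f$ does not consume such a sweep. Everything else is the elementary linear chain rule.
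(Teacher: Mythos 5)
Your proof is correct and follows essentially the same route as the paper: one oracle call to evaluate $\bar w=\nabla\tilde x(\bar u)\bar z$, a free componentwise computation of $\nabla f(\bar w)$ by decomposability, and a second call obtained by back-propagating through the computational graph realizing $\bar z'\mapsto\nabla\tilde x(\bar u)\bar z'$. The paper makes your final accounting point precise by exhibiting that graph explicitly as a linear trajectory function $\tilde\lambda$ composed with a decomposable map $\tilde\theta$, so that the second ``call'' is literally an instance of the oracle definition, but this is the same reverse-over-reverse computation you describe.
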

\begin{proof}
	We describe the backward pass of Prop.~\ref{prop:grad} as a function of $\bar z$, the computations are the same except that $\bar a$ is replaced by $-\bar z$.
	Given $\bar z = (z_1; \ldots; z_\horizon) \in \reals^{\horizon d}$, the backward pass that computes $\nabla \tilde x(\bar u) \bar z$ defines a linear trajectory function $\tilde \lambda: \bar z \rightarrow (\tilde \lambda_1(\bar z); \ldots; \tilde \lambda_\horizon(\bar z)) \in \reals^{\horizon d}$ and a linear decomposable function $\tilde \theta :\bar \lambda \rightarrow \tilde \theta(\bar \lambda) = (\tilde \theta_0(\lambda_1); \ldots \tilde \theta_{\horizon-1} (\lambda_{\horizon})) \in \reals^{\horizon p}$ as
	\begin{gather*}
	\tilde \lambda_\horizon(\bar z) = -z_\horizon, \quad \tilde \lambda_{t}(\bar z) = \Phi_{t,x}  \tilde \lambda_{t+1}(\bar z) - z_t \quad \mbox{for $t = \horizon -1,\ldots,  1$},\\\tilde \theta_t( \lambda_{t+1}) = -\Phi_{t,u} \lambda_{t+1} \quad \mbox{for $t = 0, \ldots, \horizon-1$},
	\end{gather*}
	where $\Dyn_{t,x}{=}\nabla_x \dyn_t(x_t, u_t)$, $\Dyn_{t,u}{ =} \nabla_u \dyn_t(x_t, u_t)$ and $x_t = \tilde x_t(\bar u)$.
	The function we are interested in reads then $f(\nabla \tilde x(\bar u) \bar z) = f(\tilde \theta( \tilde \lambda( \bar z)))$. Its derivative amounts then to compute the linear trajectory function $\tilde \lambda(\bar z)$ by one call to an automatic differentiation procedure, then to back-propagate through this linear trajectory function by another call to an automatic-differentiation procedure. The derivatives of the decomposable functions can be directly computed from their individual components.
	\end{proof}
We focus on problems that involve only a final state cost as in~\eqref{eq:compo_illus} or in the experiments presented in Section~\ref{sec:exp}. 
Formally those problems read  
\begin{equation}\label{eq:last_state_ctrl}
\min_{\bar u \in \reals^{\horizon p}} \quad  h(\tilde x_\horizon(\bar u)) + g(\bar u),
\end{equation}
where $\tilde x$ is trajectory function of horizon $\horizon$, $h$ is a cost function and $g$ is a real $\horizon$-decomposable penalty. Denote by $P(\tilde x, h, g)$, the problem~\eqref{eq:last_state_ctrl} for a given choice of $\tilde x, h, g$.
We present complexities of the oracles defined before for classes of problems $\mathcal{P}(\mathcal{T}, \mathcal{H}, \mathcal{G}) = \{ P(\tilde x, h, g): \tilde x \in \mathcal{T}, h \in \mathcal{H},  g \in \mathcal{G}\}$ defined by a class of trajectory functions $\mathcal{T} \subset \mathcal{T}^\horizon(\reals^{\horizon p}, \reals^{\horizon d})$, a class of state cost $\mathcal{H} \subset \mathcal{F}(\reals^d) = \{f:\reals^d \rightarrow \reals, \: f \:\mbox{differentiable}\}$ and a class of decomposable penalty function $\mathcal{G} \subset \mathcal{D}^\horizon(\reals^{\horizon d})$. Inclusions of classes of functions define inclusions of the problems.
The class of problems for which we can provide iteration complexity is defined by 
\begin{itemize}
	\item $\mathcal{T} = \mathcal{T}^\horizon_\alpha(\reals^{\horizon p}, \reals^{\horizon d})$ the class of trajectory functions of horizon $\horizon$ with $\alpha$-continuously differentiable dynamics,
	\item $\mathcal{H} = \mathcal{Q}_{L}(\reals^d)$ the class of quadratic convex functions $L$-smooth,
	\item $\mathcal{G} = \mathcal{Q}_{L}^\horizon(\reals^{\horizon p}) = \mathcal{Q}_{L}(\reals^{\horizon p}) \cap \mathcal{D}^\horizon(\reals^{\horizon p})$ the class of quadratic $\horizon$-decomposable functions $L$-smooth.
\end{itemize}
Note that any $\tilde x \in \mathcal{T}_{\alpha}^\horizon(\reals^{\horizon p}, \reals^{\horizon d})$ is $\alpha$-continuously differentiable. In the rest of this section we provide the oracle complexity of oracles for this problem general classes of problems detailed each time.

\paragraph{Model-minimization steps with automatic-differentiation oracles.}
Now we precise the feasibility and the complexity of the inner-steps of the steps defined in Section~\ref{sec:oracles_ctrl} in terms of the class of problems and the automatic-differentiation oracle defined above. The total complexity of the algorithms, when available, are presented in Section~\ref{sec:prox_lin}.
\paragraph{Gradient step.}
For any problem belonging to $\mathcal{P}(\mathcal{T}^\horizon(\reals^{\horizon p}, \reals^{\horizon d}), \mathcal{F}(\reals^d), \mathcal{D}^\horizon(\reals^{\horizon p}))$, a gradient step  amounts to compute $ \nabla \tilde x_\horizon(\bar u)\nabla h(\tilde x_\horizon(\bar u))$ and $\nabla g(\bar u) $ by a single call to an automatic-differentiation oracle.

\paragraph{Regularized Gauss-Newton step.}
In the setting~\eqref{eq:last_state_ctrl}, the regularized Gauss-Newton step~\eqref{eq:prox_linear_step} amounts to solve
\begin{align}\label{eq:prox_lin_last_state}
\min_{\bar v \in \reals^{\horizon p}} \: & h\left(\tilde x_\horizon(\bar u) + \nabla \tilde x_\horizon(\bar u)^\top\bar v\right) + g(\bar u + \bar v) + \frac{1}{2\gamma}\|\bar v\|_2^2.
\end{align}
For smooth objectives $h$ and $g$, this is a smooth strongly convex problem that can be solved approximately by a linearly convergent first order method, leading to the inexact regularized Gauss-Newton procedures described in~\citep{Drus16}.
The overall cost of an approximated regularized Gauss-Newton step is then given by the following proposition. We define (i) $\mathcal{T}^\horizon_{\alpha, L_0,\ldots, L_\alpha}(\reals^{\horizon p}, \reals^{\horizon d})$ the class of trajectory functions of horizon $\horizon$ whose dynamics $\phi_t$ are $L_\beta$-Lipschitz-continuous for all $\beta \leq \alpha$, (ii) $\mathcal{C}_{\alpha, \beta, L_\beta}(\reals^d)$ the class of convex functions $\alpha$-differentiable whose $\beta$-derivative, for $\beta\leq \alpha$, is $L_\beta$-Lipschitz continuous and (iii)  $\mathcal{C}_{\alpha, \beta, L_\beta}^\horizon(\reals^{\horizon p})= \mathcal{C}_{\alpha, \beta, L_\beta}(\reals^{\horizon p}) \cap \mathcal{D}^\horizon(\reals^{\horizon p})$ the class of $\horizon$-decomposable convex functions with corresponding smoothness properties. Note that any $\tilde x \in \mathcal{T}_{\alpha, L_0,\ldots, L_\alpha}^\horizon(\reals^{\horizon p}, \reals^{\horizon d})$ has a Lipschitz continuous $\beta$-derivative for $\beta\leq \alpha$.
\begin{proposition}\label{prop:autodiff_cplxity}
	For problems belonging to  $\mathcal{P}(\mathcal{T}_{1, L_0,L_1}(\reals^{\horizon p}, \reals^{\horizon d}), \mathcal{C}_{1,1,L_1^h}(\reals^d), \mathcal{C}_{1,1,L_1^g}^\horizon(\reals^{\horizon p}))$ defined in~\eqref{eq:last_state_ctrl}, an approximate regularized Gauss-Newton step given by~\eqref{eq:prox_lin_last_state} is solved up to $\varepsilon$ accuracy by a fast gradient method with at most
	\[
	\bigO\left(\sqrt{L_1^hM_0^2\stepsize + L_1^g\stepsize + 1} \log(\varepsilon)\right),
	\]
	calls to an automatic differentiation oracle, where $M_0$ is the Lipschitz-continuity of $\tilde x_\horizon$.
\end{proposition}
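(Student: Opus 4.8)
The plan is to read~\eqref{eq:prox_lin_last_state} as the unconstrained minimization of a smooth strongly convex function of $\bar v$, and then to combine three ingredients: (i) curvature estimates for this objective, (ii) the linear convergence rate of Nesterov's fast gradient method on smooth strongly convex problems, and (iii) the per-iteration automatic-differentiation cost established in Proposition~\ref{prop:LQR_step}, Proposition~\ref{prop:grad} and Lemma~\ref{lem:jac_vec_product}. Write $F(\bar v) = h\big(\tilde x_\horizon(\bar u) + \nabla \tilde x_\horizon(\bar u)^\top \bar v\big) + g(\bar u + \bar v) + \tfrac{1}{2\gamma}\norm{\bar v}_2^2$. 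Since $h$ and $g$ are convex and are precomposed with the affine maps $\bar v \mapsto \tilde x_\horizon(\bar u) + \nabla \tilde x_\horizon(\bar u)^\top \bar v$ and $\bar v \mapsto \bar u + \bar v$, the first two terms are convex, so the only curvature lower bound comes from the proximal term and $F$ is $\mu$-strongly convex with $\mu = 1/\gamma$.

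For the smoothness upper bound, the affine precomposition shows that $\bar v \mapsto h(\tilde x_\horizon(\bar u) + \nabla \tilde x_\horizon(\bar u)^\top \bar v)$ is $C^{1,1}$ with constant at most $L_1^h \norm{\nabla \tilde x_\horizon(\bar u)}_{\mathrm{op}}^2$; since $\tilde x_\horizon$ is $M_0$-Lipschitz and differentiable we have $\norm{\nabla \tilde x_\horizon(\bar u)}_{\mathrm{op}} \leq M_0$, so this term is $L_1^h M_0^2$-smooth. The term $g(\bar u + \bar v)$ is $L_1^g$-smooth and the proximal term is $(1/\gamma)$-smooth, hence $F$ has $L$-Lipschitz gradient with $L \leq L_1^h M_0^2 + L_1^g + 1/\gamma$, and its condition number is $\kappa = L/\mu \leq L_1^h M_0^2 \gamma + L_1^g \gamma + 1$. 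Running Nesterov's fast gradient method on $F$ then gives $F(\bar v^{(i)}) - \min F \leq C\,(1 - \kappa^{-1/2})^{i}$ for a constant $C$ depending on the initial gap, so $\bigO\big(\sqrt{\kappa}\,\log(1/\varepsilon)\big) = \bigO\big(\sqrt{L_1^h M_0^2 \gamma + L_1^g \gamma + 1}\,\log(1/\varepsilon)\big)$ iterations suffice to reach accuracy $\varepsilon$, the $\kappa$- and initial-gap-dependent constants being absorbed in the logarithm (the statement writes $\log(\varepsilon)$ for $\log(1/\varepsilon)$).

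It remains to bound the cost of one iteration, which is dominated by a single evaluation of $\nabla F(\bar v) = \nabla \tilde x_\horizon(\bar u)\,\nabla h\big(\tilde x_\horizon(\bar u) + \nabla \tilde x_\horizon(\bar u)^\top \bar v\big) + \nabla g(\bar u + \bar v) + \bar v/\gamma$. After a single forward pass computing $\tilde x_\horizon(\bar u)$ and storing the dynamics Jacobians $\Dyn_{t,x}, \Dyn_{t,u}$ (performed once, outside the loop), the Jacobian--vector product $\bar v \mapsto \nabla \tilde x_\horizon(\bar u)^\top \bar v$ and the adjoint product $\bar z \mapsto \nabla \tilde x_\horizon(\bar u)\bar z$ are each realized by a linear-time dynamic-programming / automatic-differentiation procedure as in Proposition~\ref{prop:LQR_step} and Proposition~\ref{prop:grad}, and differentiating through the composition costs only a constant number of such calls exactly as in Lemma~\ref{lem:jac_vec_product}; meanwhile $\nabla h$ acts on $\reals^d$ and $\nabla g$ is read off component-wise from the decomposable structure of $g$, at no oracle cost. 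Hence each iteration uses $\bigO(1)$ automatic-differentiation oracle calls, and multiplying by the iteration count yields the claimed bound. I expect the only delicate point to be the smoothness estimate --- namely the bound $\norm{\nabla \tilde x_\horizon(\bar u)}_{\mathrm{op}} \leq M_0$ and the fact that the composed objective is globally $C^{1,1}$, which holds precisely because the inner argument of $h$ is affine in $\bar v$ --- together with the minor bookkeeping of whether ``$\varepsilon$-accuracy'' is measured in function value or in iterate distance, which affects only the constants inside the logarithm.
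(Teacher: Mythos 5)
Your proposal is correct and follows essentially the same route as the paper: the subproblem is $(L_1^h M_0^2 + L_1^g + \gamma^{-1})$-smooth and $\gamma^{-1}$-strongly convex, the fast gradient method needs $\bigO(\sqrt{\kappa}\log(1/\varepsilon))$ iterations, and each gradient evaluation costs a constant number of automatic-differentiation calls via Lemma~\ref{lem:jac_vec_product} (the paper counts exactly three: two for the Jacobian--vector product $\nabla\tilde x_\horizon(\bar u)^\top\bar v$, obtained as the derivative of $z\mapsto v^\top\nabla\tilde x_\horizon(\bar u)z$, plus one for the adjoint product with $\nabla h$). Your additional detail on the affine-precomposition smoothness bound matches what the paper asserts implicitly.
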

\begin{proof}
	For problems $\mathcal{P}(\mathcal{T}_{1, L_0,L_1}(\reals^{\horizon p}, \reals^{\horizon d}), \mathcal{C}_{1,1,L_1^h}(\reals^d), \mathcal{C}_{1,1,L_1^g}^\horizon(\reals^{\horizon p}))$, the regularized Gauss-Newton subproblem~\eqref{eq:last_state_ctrl} is $L_1^hM_0^2 + L_1^g + \stepsize^{-1}$ smooth and $\stepsize^{-1}$ strongly convex.
	Therefore to achieve $\varepsilon$ accuracy, a fast gradient method requires at most $\bigO(\sqrt{(L_1^hM_0^2 + L_1^g + \stepsize^{-1})/\stepsize^{-1}} \log(\varepsilon))$ calls to first order oracles of $\bar v \rightarrow h(\tilde x_\horizon(\bar u) + \nabla \tilde x_\horizon(\bar u)^\top\bar v) + g(\bar u + \bar v) + \frac{1}{2\gamma}\|\bar v\|_2^2$. 
	Each call requires to compute $\nabla \tilde x_\horizon(\bar u)^\top\bar v$, that is the derivative of $z \rightarrow v^\top \nabla \tilde x_\horizon(\bar u)z$ for $z\in \reals^d$, which costs two calls to an automatic differentiation procedure according to Lem.~\ref{lem:jac_vec_product}. An additional call to an automatic differentiation oracle  is then needed to compute $\nabla \tilde x_\horizon(\bar u) \nabla h(\tilde x_\horizon(\bar u) + \nabla \tilde x_\horizon(\bar u)^\top\bar v)$.
\end{proof}

\paragraph{Levenberg-Marquardt step.}
In the setting~\eqref{eq:last_state_ctrl}, the Levenberg-Marquardt step~\eqref{eq:LM_step} amounts to solve
\begin{align}\label{eq:LM_step_last_state}
\min_{\bar v \in \reals^{\horizon p}} \: & q_h\left(\tilde x_\horizon(\bar u)+\nabla \tilde x_\horizon(\bar u)^\top\bar v; \tilde x_\horizon(\bar u) \right)+q_g(\bar u+\bar v; \bar u) + \frac{1}{2\stepsize}\|\bar v\|_2^2,
\end{align}
where $q_h$ and $q_g$ are quadratic approximations of  $h$ and $g$ respectively, both being assumed to be twice differentiable. Here, duality offers a fast resolution of the step as shown in the following proposition. It shows that its cost is only $2d +1$ times more than one of a gradient step. Recall also that for $h$, $g$ quadratics the Levenberg-Marquardt step amounts to a regularized Gauss-Newton step. We define (i) $\mathcal{C}_{\alpha}(\reals^d)$ the class of convex functions $\alpha$-continuously differentiable and (ii) $\mathcal{C}_{\alpha}^\horizon(\reals^{\horizon p}) = \mathcal{C}_{\alpha}(\reals^{\horizon p}) \cap \mathcal{D}^\horizon(\reals^{\horizon p})$ the class of $\horizon$-decomposable convex functions with corresponding differentiation properties.
\begin{proposition}\label{prop:autodiff_cplxity_quad}
	For problems belonging to $\mathcal{P}(\mathcal{T}^\horizon(\reals^{\horizon p}, \reals^{\horizon d}), \mathcal{C}_{2}(\reals^d), \mathcal{C}_{2}^\horizon(\reals^{\horizon p}))$ defined in~\eqref{eq:last_state_ctrl}, a Levenberg-Marquardt step~\eqref{eq:LM_step_last_state} is solved exactly with at most $2d +1$ calls to an automatic differentiation oracle.
\end{proposition}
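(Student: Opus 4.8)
The plan is to recognize the Levenberg-Marquardt subproblem~\eqref{eq:LM_step_last_state} as an unconstrained strongly convex quadratic in $\bar v\in\reals^{\horizon p}$ whose Hessian is the sum of a cheaply invertible block-diagonal matrix and a term of rank at most $d$, and to solve it in closed form by the matrix inversion lemma (Woodbury identity), so that the only costly operations are $\bigO(d)$ Jacobian--vector products with $\nabla\tilde x_\horizon(\bar u)$.

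Set $x=\tilde x_\horizon(\bar u)$, $J=\nabla\tilde x_\horizon(\bar u)\in\reals^{\horizon p\times d}$, $H=\nabla^2 h(x)\in\reals^{d\times d}$ and $G=\nabla^2 g(\bar u)\in\reals^{\horizon p\times\horizon p}$. Expanding $q_h(\cdot\,;x)$ and $q_g(\cdot\,;\bar u)$ in~\eqref{eq:LM_step_last_state} gives $\bar v^*=\argmin_{\bar v}\{\frac12\bar v^\top A\bar v+b^\top\bar v\}$ with
\[
A=JHJ^\top+N,\qquad N=G+\stepsize^{-1}\id_{\horizon p},\qquad b=J\nabla h(x)+\nabla g(\bar u),
\]
so that $\bar v^*=-A^{-1}b$. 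Three structural facts make this tractable. First, convexity of $h$ and $g$ yields $H\succeq0$ and $G\succeq0$, hence $N\succeq\stepsize^{-1}\id\succ0$ and $A\succ0$, so the step is well defined and unique. Second, since $g$ is $\horizon$-decomposable, $G$ and hence $N$ are block diagonal with $\horizon$ blocks of size $p\times p$, so $N^{-1}$ and products $N^{-1}z$ cost $\bigO(\horizon p^3)$ and use no call to the automatic-differentiation oracle. Third, $JHJ^\top$ has rank at most $d$: taking a symmetric square root $H=RR^\top$ with $R\in\reals^{d\times d}$ (computed in $\bigO(d^3)$) and $U=JR\in\reals^{\horizon p\times d}$, we have $A=UU^\top+N$, and the matrix inversion lemma gives
\[
\bar v^*=-N^{-1}b+N^{-1}U\bigl(\id_d+U^\top N^{-1}U\bigr)^{-1}U^\top N^{-1}b,
\]
where $\id_d+U^\top N^{-1}U\succeq\id_d$ is invertible.

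It remains to count automatic-differentiation calls. A single forward pass through the dynamics (done once and shared) yields $x$ and the stored Jacobians of the $\dyn_t$, after which each application of $J=\nabla\tilde x_\horizon(\bar u)$ to a vector $z\in\reals^d$ is one oracle call on the cotangent $(0;\ldots;0;z)\in\reals^{\horizon d}$. Applying $J$ to the $d$ columns of $R$ produces and stores $U$ in $d$ calls; applying $J$ to $\nabla h(x)$ produces the Jacobian term of $b$ in one more call (or, if $J$ is instead materialized column by column, this is free). Once $U$ is stored, $U^\top$ is an ordinary transpose and the remaining work---forming and inverting the block-diagonal $N$, forming and inverting the $d\times d$ matrix $\id_d+U^\top N^{-1}U$, and the matrix--vector products in the displayed formula---requires no automatic-differentiation call; if one prefers not to store $U$, each transpose-mode product $\nabla\tilde x_\horizon(\bar u)^\top(\cdot)$ can instead be obtained through Lemma~\ref{lem:jac_vec_product} at a cost of two calls. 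In every accounting the Jacobian-related work is at most $2d+1$ oracle calls, which, since a gradient step is exactly one call (see the ``Gradient step'' paragraph and Prop.~\ref{prop:grad}), is the claimed bound.

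The main obstacle is the bookkeeping around the transpose-Jacobian: $\bar z\mapsto\nabla\tilde x_\horizon(\bar u)\bar z$ is one oracle call, but the Jacobian--vector product $\bar v\mapsto\nabla\tilde x_\horizon(\bar u)^\top\bar v$ is forward-mode and is not directly provided, so one must either route it through Lemma~\ref{lem:jac_vec_product} or---cleaner---materialize the thin matrix $U$ (resp.\ $J$) once. One must also verify that the reduced low-rank-plus-block-diagonal form is exactly what the Levenberg-Marquardt model produces, which is where the restriction to a single last-state cost~\eqref{eq:last_state_ctrl} (so that $\nabla^2 q_h$ enters only through the $d\times d$ matrix $H$ sandwiched by $J$) and the decomposability of $g$ (so that $N$ is block diagonal) are used.
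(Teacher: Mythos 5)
Your proof is correct, but it takes a genuinely different route from the paper. The paper dualizes the Levenberg--Marquardt subproblem to a $d$-dimensional strongly convex quadratic $\min_{z\in\reals^d} r^*(z)+s^*(-\nabla\tilde x_\horizon(\bar u)z)$ (with $r,s$ the quadratic state and control models, $s^*$ computable in closed form because $s$ is $\horizon$-decomposable), solves it with $d$ conjugate-gradient iterations at two oracle calls each via Lemma~\ref{lem:jac_vec_product}, and recovers the primal point with one last call, giving exactly $2d+1$. You instead stay in the primal and exploit the same two structural facts---rank-$\leq d$ coupling through the last state and block-diagonality of $G+\stepsize^{-1}\id_{\horizon p}$ from decomposability of $g$---to solve the normal equations in closed form by the Woodbury identity, materializing the thin matrix $U=\nabla\tilde x_\horizon(\bar u)R$ column by column. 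Your accounting is also sound and in fact sharper: storing $U$ costs $d$ oracle calls and the term $\nabla\tilde x_\horizon(\bar u)\nabla h(x)$ one more, so you only need $d+1\leq 2d+1$ calls, the $2d+1$ figure arising only in your matrix-free fallback where transpose products are routed through Lemma~\ref{lem:jac_vec_product}. The trade-off is memory versus calls: the paper's dual-CG scheme never stores an $\horizon p\times d$ matrix, while your direct solve halves the oracle count at the price of $\bigO(\horizon p d)$ storage and an explicit square root of $\nabla^2 h$. Both arguments use the hypotheses in the same places, so either one establishes the stated bound.
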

\begin{proof}
	The dual problem of the Levenberg-Marquardt step~\eqref{eq:LM_step_last_state} reads
	\begin{equation}\label{eq:dual_pb}
	\min_{z \in \reals^d} r^*(z)  + s^*(-\nabla \tilde x_\horizon(\bar u) z),
	\end{equation}
	where $r(x) = q_h\left(\tilde x_\horizon(\bar u) + x; \tilde x_\horizon(\bar u) \right) $, $s(\bar v)  = q_g(\bar u + \bar v; \bar u) + \frac{1}{2\stepsize}\|\bar v\|_2^2$ and $r^*, s^*$ are their respective conjugate functions that can be computed in closed form. Note that, as $s$ is $\horizon$ decomposable, so is $s^*$. The dual problem can then be solved in $d$ iterations of a conjugate gradient method, each iteration requires to compute the gradient of  $s^*(-\nabla \tilde x_\horizon(\bar u) z)$. According to Lem.~\ref{lem:jac_vec_product} this amounts to two calls to an automatic differentiation oracle. A primal solution is then given by $\bar v^* = \nabla s^*(-\nabla \tilde x_\horizon(\bar u) z^*)$ which is given by an additional call to an automatic differentiation oracle.
\end{proof}

\section{Composite optimization}\label{sec:prox_lin}
Before analyzing the methods of choice for composite optimization, we review classical algorithms for nonlinear control and highlight improvements for better convergence behavior. All algorithms are completely detailed in Appendix~\ref{app:compa_ILQR}.

\subsection{Optimal control methods}
\paragraph{Differential Dynamic Programming.}
Differential Dynamic Programming (DDP) is presented  as a dynamic programming procedure applied to a second order approximation of the Bellman equation~\citep{Jaco70}. Formally at a given command $\bar u$ with associated trajectory $\bar x = \tilde x (\bar u)$, it consists in approximating the cost-to-go functions as 
\begin{align*}
\valuefunc_{t}(y) & =  q_{h_t}(x_t+y; x_t) + \min_{v} \{q_{g_t}(u_t+v; u_t) + q_{\valuefunc_{t+1} \circ \dyn_t}(x_t+y, u_t+v; x_t, u_t)\}
\end{align*}
where for a function $f(y)$, $q_f(y; x)$ denotes its second order approximation around $x$. The roll-out pass is then performed on the true trajectory as normally done in a dynamic programming procedure.
We present an interpretation of DDP as an optimization on the state variables in Appendix~\ref{app:approx_dyn_prog}.

\paragraph{ILQR, ILQG \citep{Li04, Todo05, Li07}.}
DDP was superseded by the Iterative Linearized Quadratic Regulator (ILQR) method, presented in Section~\ref{sec:opt_ctrl_setting}~\citep{Li04}.
In the case of noisy dynamics, the Linear Quadratic Regulator problem~\eqref{eq:LQR_gauss_newton} was replaced by a Linear Quadratic Gaussian problem where the objectives are averaged with respect to the noise, the iterative procedure was then called ILQG as presented in~\citep{Todo05, Li07}. 

Prop.~\ref{prop:LQR_step} clarifies that these procedures, as defined in~\citep{Li04, Todo05, Li07}, amount to compute 
\begin{equation}\label{eq:gauss_newton}
\bar v^* = \argmin_{\bar v\in \reals^{\horizon p}} q_f(\bar u + \bar v;\bar u) 
\end{equation}
to perform a line-search along its direction such that 
$f(\bar u + \alpha \bar v^*) \leq f(\bar u)$. For ILQR the model $q_f$ is defined as in \eqref{eq:quad_model}, while for ILQG this corresponds to the model defined in~\eqref{eq:approx_gaussian} with quadratic models $q_f$ and $q_g$. Compared to a Levenberg-Marquardt step~\eqref{eq:LM_step}, that reads
\begin{equation}\label{eq:Levenberg_Marquardt}
\bar u^+ = \bar u + \argmin_{\bar v \in \reals^{\horizon p}}\left\{q_f(\bar u + \bar v;\bar u) +\frac{1}{2\stepsize} \|\bar v\|_2^2 \right\},
\end{equation}
we see that those procedures do not take into account the inaccuracy of the model far from the current point. Although a line-search can help ensuring convergence, no rate of convergence is known.
For quadratics $h_t, g_t$, the Levenberg-Marquardt steps become regularized Gauss-Newton steps whose analysis shows the benefits of the regularization term in~\eqref{eq:Levenberg_Marquardt} to ensure convergence to a stationary point.

\paragraph{ILQG \citep{tassa2012synthesis}.}
The term ILQG has often been used to refer to an algorithm combining ideas from DDP and ILQR resp.~\citep{tassa2012synthesis}. The general structure proposed then is akin to DDP in the sense that it uses a dynamic programming approach where the cost-to-go functions are approximated. However, as in ILQR, only the first order derivatives of the dynamics are taken into account to approximate the cost-to-go functions. Formally, at a given command $\bar u$ with associated trajectory $\bar x = \tilde x (\bar u)$, ILQG consists in approximating the cost-to-go functions as 
\begin{align*}
\valuefunc_{t}(y) & =  q_{h_t}(x_t+y; x_t) + \min_{v} \{q_{g_t}(u_t+v; u_t) + q_{\valuefunc_{t+1}}(\dyn_t(x_t) + \nabla \dyn_t(x_t, u_t)^\top v; x_t)\}
\end{align*}
While the cost-to-go functions are the same as in~\citep{Li04}, the roll-out pass is then performed on the true trajectory and not the linearized one. The analysis is therefore similar to the one of DDP. We leave it for future work and focus on the original definition of ILQR given in~\citep{Li04}.

\subsection{Regularized ILQR via regularized Gauss-Newton}
We present convergence guarantees of the regularized Gauss-Newton method for composite optimization problems of the form
\begin{equation}\label{eq:composite_analysis}
\min_{{\bar u}\in \reals^{\horizon d}} \: \optimobj({\bar u}) = \ctrlobj(\targetfunc({\bar u})) + \regfunc({\bar u}),
\end{equation}
where $\ctrlobj:\reals^{\horizon d} \rightarrow \reals$ and $\regfunc:\reals^{\horizon p}\rightarrow \reals$ are convex quadratic, and $\targetfunc:\reals^{\horizon p}\rightarrow \reals^{\horizon d}$ is differentiable with continuous gradients. The regularized Gauss-Newton method then naturally leads to a regularized ILQR. 
In the following, we denote by $L_\ctrlobj$ and $L_\regfunc$ the smoothness constants of respectively $\ctrlobj$ and $\regfunc$ and by $\ell_{\targetfunc,S}$ the Lipschitz constant of $\targetfunc$ on the initial sub-level set $S = \{ {\bar u}: \optimobj({\bar u}) \leq \optimobj({\bar u}_0)\}$. 

The regularized Gauss-Newton method consists in iterating, starting from a given ${\bar u}_0$,
\begin{equation}\label{eq:prox_lin_algo}
{\bar u}_{k+1} = \bar u_k + \argmin_{\bar v \in \reals^{\horizon p}}\left\{c_f(\bar u_k{ + }\bar v;\bar u_k)+ \frac{1}{2\stepsize_k} \|\bar v\|_2^2 \right\},
\end{equation}
We use $\bar u_{k+1} = \text{GN}(u_k; \gamma_k)$ to denote~\eqref{eq:prox_lin_algo} hereafter.
The convergence is stated in terms of the difference of iterates
that,
in this case, can directly be linked to the norm of the gradient, denoting $H = \nabla^2 h(\bar x)$ and $G = \nabla^2 g({\bar u})$,
\begin{equation}\label{eq:explicit_GN}
{\bar u}_{k+1} = {\bar u}_k - (\nabla \targetfunc({\bar u}_k) H \nabla\targetfunc({\bar u}_k)^\top + G + \stepsize_k^{-1}\idm_{\horizon p})^{-1} \nabla \optimobj ({\bar u}_k).
\end{equation}

The convergence to a stationary point is guaranteed as long as we are able to get a sufficient decrease condition when minimizing this model as stated in the following proposition.
\begin{restatable}{proposition}{proxlinconv}\label{prop:prox_lin_conv}
	Consider a composite objective $f$ as in~\eqref{eq:composite_analysis} with convex models $c_f(\cdot; \bar u)$ defined in~\eqref{eq:cvx_model}.
	Assume that the step sizes $\stepsize_k$ of the regularized Gauss-Newton method~\eqref{eq:prox_lin_algo} are chosen such that 
	\begin{equation}\label{eq:suff_decrease}
	\optimobj({\bar u}_{k+1}) \leq c_f({\bar u}_{k+1}; {\bar u}_k)+ \frac{1}{2\stepsize_k} \|{\bar u}_{k+1}-{\bar u}_k\|_2^2
	\end{equation}
	and $\stepsize_{\min} \leq \gamma_k \leq \stepsize_{\max}$.
	
	Then the objective value decreases over the iterations and the sequence of iterates satisfies
	\[
	\min_{k=0,\ldots,N}\norm{\nabla \optimobj ({\bar u}_{k})}^2 \leq \frac{2 L (\optimobj({\bar u}_0) - \optimobj^*) }{N+1} ,
	\]
	where $L = \max_{\stepsize \in[\stepsize_{\min}, \stepsize_{\max}]} \stepsize (\ell_{\targetfunc,S}^2L_h  + L_g + \stepsize^{-1})^2$ and $\optimobj^* = \lim_{k \rightarrow +\infty}\optimobj({\bar u}_k)$.
\end{restatable}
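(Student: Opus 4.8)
The plan is to run the classical descent argument for regularized prox‑linear (Gauss--Newton) iterations, specialized to the present setting where the explicit update~\eqref{eq:explicit_GN} is available. Write $\bar v_k = {\bar u}_{k+1} - {\bar u}_k$ for the $k$-th step, so that $\bar v_k$ minimizes the regularized model $M_k(\bar v) \triangleq c_f({\bar u}_k + \bar v; {\bar u}_k) + \tfrac{1}{2\stepsize_k}\|\bar v\|_2^2$. Since $\ctrlobj$ and $\regfunc$ are convex quadratics and $\bar v \mapsto \targetfunc({\bar u}_k) + \nabla \targetfunc({\bar u}_k)^\top \bar v$ is affine, $M_k$ is $\stepsize_k^{-1}$-strongly convex, hence $\bar v_k$ is well defined and unique.

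First I would establish a one-step decrease. Strong convexity of $M_k$ around its minimizer gives $M_k(0) \ge M_k(\bar v_k) + \tfrac{1}{2\stepsize_k}\|\bar v_k\|_2^2$, and since $M_k(0) = c_f({\bar u}_k; {\bar u}_k) = \optimobj({\bar u}_k)$ this reads $c_f({\bar u}_{k+1};{\bar u}_k) + \tfrac{1}{2\stepsize_k}\|\bar v_k\|_2^2 \le \optimobj({\bar u}_k) - \tfrac{1}{2\stepsize_k}\|\bar v_k\|_2^2$. Combining with the assumed sufficient-decrease condition~\eqref{eq:suff_decrease} yields $\optimobj({\bar u}_{k+1}) \le \optimobj({\bar u}_k) - \tfrac{1}{2\stepsize_k}\|\bar v_k\|_2^2$. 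In particular $\{\optimobj({\bar u}_k)\}$ is non-increasing (so $\optimobj^* = \lim_k \optimobj({\bar u}_k)$ exists in $[-\infty,\optimobj({\bar u}_0)]$, and if it is $-\infty$ the claimed bound holds trivially), every iterate stays in the sub-level set $S$, and telescoping over $k=0,\dots,N$ gives $\sum_{k=0}^{N} \tfrac{1}{2\stepsize_k}\|\bar v_k\|_2^2 \le \optimobj({\bar u}_0) - \optimobj^*$.

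Next I would convert the step length $\|\bar v_k\|_2$ into the gradient norm $\|\nabla \optimobj({\bar u}_k)\|_2$. Because $\ctrlobj$ and $\regfunc$ are quadratic, the first-order optimality condition for $M_k$ is precisely~\eqref{eq:explicit_GN}: $\nabla \optimobj({\bar u}_k) = -\big(\nabla \targetfunc({\bar u}_k) H \nabla \targetfunc({\bar u}_k)^\top + G + \stepsize_k^{-1}\idm_{\horizon p}\big)\bar v_k$ with $H = \nabla^2\ctrlobj$, $G = \nabla^2\regfunc$ (constant matrices here). Using the operator-norm bounds $\|\nabla\targetfunc({\bar u}_k)\|_{\mathrm{op}} \le \ell_{\targetfunc,S}$ — which applies because ${\bar u}_k \in S$ by the previous paragraph — together with $\|H\|_{\mathrm{op}}\le L_h$ and $\|G\|_{\mathrm{op}}\le L_g$, I get $\|\nabla\optimobj({\bar u}_k)\|_2 \le (\ell_{\targetfunc,S}^2 L_h + L_g + \stepsize_k^{-1})\|\bar v_k\|_2$, hence $\tfrac{1}{2\stepsize_k}\|\bar v_k\|_2^2 \ge \tfrac{1}{2\stepsize_k(\ell_{\targetfunc,S}^2 L_h + L_g + \stepsize_k^{-1})^2}\|\nabla\optimobj({\bar u}_k)\|_2^2 \ge \tfrac{1}{2L}\|\nabla\optimobj({\bar u}_k)\|_2^2$, the last step using $\stepsize_{\min}\le\stepsize_k\le\stepsize_{\max}$ and the definition of $L$. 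Substituting into the telescoped inequality gives $\tfrac{1}{2L}\sum_{k=0}^N\|\nabla\optimobj({\bar u}_k)\|_2^2 \le \optimobj({\bar u}_0) - \optimobj^*$, and bounding the sum below by $(N+1)\min_{k=0,\dots,N}\|\nabla\optimobj({\bar u}_k)\|_2^2$ yields the stated rate.

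The routine pieces are the strong-convexity inequality and the spectral-norm estimates. The one point that genuinely needs care is confirming that all iterates remain in $S$ so that the Lipschitz bound on $\nabla\targetfunc$ holds uniformly along the trajectory — this is exactly what the monotone decrease of the objective provides — and keeping the $\stepsize_k$-dependence explicit everywhere so that the worst case over $\stepsize\in[\stepsize_{\min},\stepsize_{\max}]$ collapses to the single constant $L = \max_{\stepsize\in[\stepsize_{\min},\stepsize_{\max}]}\stepsize(\ell_{\targetfunc,S}^2 L_h + L_g + \stepsize^{-1})^2$.
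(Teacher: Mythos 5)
Your proposal is correct and follows essentially the same route as the paper's proof: the $\stepsize_k^{-1}$-strong convexity of the regularized model at its minimizer gives the per-iteration decrease $\optimobj({\bar u}_{k+1}) \leq \optimobj({\bar u}_k) - \tfrac{1}{2\stepsize_k}\|{\bar u}_{k+1}-{\bar u}_k\|_2^2$ once combined with~\eqref{eq:suff_decrease}, the iterates therefore remain in $S$, and the explicit form~\eqref{eq:explicit_GN} converts step lengths into gradient norms with the constant $\ell_{\targetfunc,S}^2L_h + L_g + \stepsize_k^{-1}$ before telescoping. No gaps.
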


To ensure the sufficient decrease condition, one needs the model to approximate the objective up to a quadratic error which is ensured on any compact set as stated in the following proposition.
\begin{restatable}{lemma}{suffdecrease}
\label{lem:suff_decrease}
	Consider a composite objective $f$ as in~\eqref{eq:composite_analysis} with convex models $c_f(\cdot; \bar u)$ defined in~\eqref{eq:cvx_model}.
	For any compact set $C\subset \reals^{\horizon p}$ there exists $M_C >0$ such that for any ${\bar u}, \bar v \in C$,
	\begin{equation}\label{eq:quad_error}
	|\optimobj(\bar v) - c_f(\bar v; {\bar u}) | \leq \frac{M_C}{2} \|\bar v-  {\bar u}\|_2^2.
	\end{equation}
\end{restatable}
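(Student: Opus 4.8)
The key structural observation is that the penalty $\regfunc$ enters the objective $\optimobj$ and the convex model $c_f(\cdot\,;\bar u)$ in exactly the same way, so it cancels; what is left is the comparison between $\ctrlobj$ evaluated at the true image $\targetfunc(\bar v)$ and at the first-order Taylor expansion of $\targetfunc$ taken at $\bar u$. Thus for $\bar u,\bar v\in C$ I would start from
\[
\optimobj(\bar v) - c_f(\bar v;\bar u) \;=\; \ctrlobj\bigl(\targetfunc(\bar v)\bigr) - \ctrlobj\bigl(\targetfunc(\bar u) + \nabla\targetfunc(\bar u)^\top(\bar v-\bar u)\bigr),
\]
and control the right-hand side by combining (i) a quadratic bound on the linearization error of $\targetfunc$ and (ii) the local Lipschitz continuity of the quadratic cost $\ctrlobj$.

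For (i), let $\hat C$ be the convex hull of $C$, which is again compact. Writing the integral form of the mean value theorem, $\targetfunc(\bar v)-\targetfunc(\bar u) = \int_0^1 \nabla\targetfunc(\bar u+s(\bar v-\bar u))^\top(\bar v-\bar u)\,ds$, and subtracting $\nabla\targetfunc(\bar u)^\top(\bar v-\bar u)$, I would obtain
\[
\bigl\|\targetfunc(\bar v)-\targetfunc(\bar u)-\nabla\targetfunc(\bar u)^\top(\bar v-\bar u)\bigr\|_2 \;\le\; \Bigl(\int_0^1\bigl\|\nabla\targetfunc(\bar u+s(\bar v-\bar u))-\nabla\targetfunc(\bar u)\bigr\|\,ds\Bigr)\|\bar v-\bar u\|_2 \;\le\; \frac{\ell_{\hat C}}{2}\|\bar v-\bar u\|_2^2,
\]
where $\ell_{\hat C}$ is a Lipschitz constant of $\nabla\targetfunc$ on $\hat C$; such a constant exists because $\targetfunc$, as a trajectory function assembled from $\targetfunc_{t+1}=\dyn_t(\targetfunc_t,u_t)$, has a locally Lipschitz gradient (this holds, e.g., whenever the dynamics $\dyn_t$ are $C^{1,1}$, as in the function classes considered in Section~\ref{sec:auto_diff}), a fact provable by induction on the horizon using the chain rule and the boundedness of the iterates $\targetfunc_t$ over $\hat C$. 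For (ii), since $\ctrlobj$ is quadratic its gradient is affine, hence bounded on any compact set; the two arguments $\targetfunc(\bar u)$ and $\targetfunc(\bar u)+\nabla\targetfunc(\bar u)^\top(\bar v-\bar u)$ remain in a fixed compact set $K$ (the first because $\targetfunc(C)$ is compact, the second because $\|\nabla\targetfunc\|$ and $\|\bar v-\bar u\|$ are bounded on $\hat C$). Letting $\ell_{\ctrlobj,K}$ be the Lipschitz constant of $\ctrlobj$ on $K$,
\[
\bigl|\optimobj(\bar v)-c_f(\bar v;\bar u)\bigr| \;\le\; \ell_{\ctrlobj,K}\,\bigl\|\targetfunc(\bar v)-\targetfunc(\bar u)-\nabla\targetfunc(\bar u)^\top(\bar v-\bar u)\bigr\|_2 \;\le\; \frac{\ell_{\ctrlobj,K}\,\ell_{\hat C}}{2}\,\|\bar v-\bar u\|_2^2,
\]
so the claim follows with $M_C=\ell_{\ctrlobj,K}\,\ell_{\hat C}$ (one may equally replace the Lipschitz bound on $\ctrlobj$ by the exact second-order Taylor identity for quadratics, $|\ctrlobj(a)-\ctrlobj(b)|\le\|\nabla\ctrlobj(b)\|\,\|a-b\|+\tfrac{L_\ctrlobj}{2}\|a-b\|^2$, and absorb both terms using that $\|a-b\|$ is itself $O(\|\bar v-\bar u\|_2^2)$ on $\hat C$).

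The only genuinely delicate step is establishing the uniform Lipschitz continuity of $\nabla\targetfunc$ on the compact set $\hat C$: everything else is a routine combination of Taylor's theorem and the elementary boundedness of an affine gradient on a compact. This is where regularity of the dynamics beyond mere continuous differentiability is used, and is the part that would be spelled out carefully via the recursive definition of $\targetfunc$; it is also the reason the statement is naturally phrased over an arbitrary compact set rather than globally.
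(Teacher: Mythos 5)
Your proof follows essentially the same route as the paper's: the penalty $\regfunc$ cancels, the remaining difference is bounded via the local Lipschitz constant of $\ctrlobj$ on a compact set containing both $\targetfunc(\bar v)$ and the linearized point, and the linearization error of $\targetfunc$ is controlled by the integral mean value theorem together with a Lipschitz constant of $\nabla\targetfunc$. If anything you are slightly more careful than the paper, which works on $C$ rather than its convex hull and asserts Lipschitz continuity of $\nabla\targetfunc$ from mere continuity of the gradient, whereas you correctly flag that this step is where $C^{1,1}$ regularity of the dynamics is actually needed.
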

Finally one needs to ensure that the iterates stay in a bounded set which is the case for sufficiently small step-sizes such that the sufficient decrease condition is satisfied along the sequence of iterates generated by the algorithm.
\begin{restatable}{lemma}{minstep}\label{lem:min_step}
	Consider a composite objective $f$ as in~\eqref{eq:composite_analysis}.
	For any $ k$ such that ${\bar u}_k \in S$, where $S = \{ {\bar u}: \optimobj({\bar u}) \leq \optimobj({\bar u}_0)\}$ is the initial sub-level set, any step-size 
	\begin{equation}\label{eq:min_step_size}
	\gamma_k \leq \hat \gamma = \min\{\ell_{\optimobj, S}^{-1}, M_{C}^{-1}\}
	\end{equation} 
	ensures that the sufficient decrease condition~\eqref{eq:suff_decrease} is satisfied, 
	where $\ell_{\optimobj, S}$ is the Lipschitz constant of $\optimobj$ on $S$, $C = S + B_1$ with $B_1$ the unit Euclidean ball centered at 0 and $M_C$ ensures~\eqref{eq:quad_error}.
\end{restatable}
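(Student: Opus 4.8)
The plan is to combine two ingredients: the proximal regularization makes the Gauss-Newton step $\bar v_k \triangleq \bar u_{k+1} - \bar u_k$ short whenever $\gamma_k$ is small, and on the compact set $C$ the convex model $c_f(\cdot;\bar u_k)$ from~\eqref{eq:cvx_model} approximates $f$ up to the quadratic error furnished by Lemma~\ref{lem:suff_decrease}. First I would record the elementary properties of the model: since $h$ and $g$ are convex, the map $\bar v \mapsto c_f(\bar u_k + \bar v;\bar u_k)$ is convex (it is $h$ precomposed with the affine map $\bar v \mapsto \tilde x(\bar u_k) + \nabla\tilde x(\bar u_k)^\top\bar v$, plus the convex term $g(\bar u_k + \bar v)$); adding $\tfrac{1}{2\gamma_k}\|\bar v\|_2^2$ makes it strongly convex, so $\bar v_k$ is well defined as its unique minimizer; and by the chain rule its gradient at $\bar v = 0$ equals $\nabla\tilde x(\bar u_k)\nabla h(\tilde x(\bar u_k)) + \nabla g(\bar u_k) = \nabla f(\bar u_k)$, while its value there equals $f(\bar u_k)$.

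Next I would bound $\|\bar v_k\|_2$. The first-order optimality condition for~\eqref{eq:prox_lin_algo} reads $-\gamma_k^{-1}\bar v_k \in \partial_{\bar v}\, c_f(\bar u_k + \bar v_k;\bar u_k)$; plugging this into the subgradient inequality of the model evaluated at $\bar v = 0$ gives $f(\bar u_k) = c_f(\bar u_k;\bar u_k) \ge c_f(\bar u_k + \bar v_k;\bar u_k) + \gamma_k^{-1}\|\bar v_k\|_2^2$, whereas convexity of the model at $\bar v = 0$ (now using the gradient $\nabla f(\bar u_k)$) gives $c_f(\bar u_k + \bar v_k;\bar u_k) \ge f(\bar u_k) + \langle\nabla f(\bar u_k),\bar v_k\rangle$. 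Combining the two, $\gamma_k^{-1}\|\bar v_k\|_2^2 \le -\langle\nabla f(\bar u_k),\bar v_k\rangle \le \|\nabla f(\bar u_k)\|_2\,\|\bar v_k\|_2$, hence $\|\bar v_k\|_2 \le \gamma_k\|\nabla f(\bar u_k)\|_2$. Since $\bar u_k \in S$ we have $\|\nabla f(\bar u_k)\|_2 \le \ell_{f,S}$, so the choice $\gamma_k \le \ell_{f,S}^{-1}$ forces $\|\bar v_k\|_2 \le 1$ and therefore $\bar u_{k+1} = \bar u_k + \bar v_k \in S + B_1 = C$.

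With $\bar u_k$ and $\bar u_{k+1}$ both in $C$, Lemma~\ref{lem:suff_decrease} applied with the compact set $C$ yields $f(\bar u_{k+1}) \le c_f(\bar u_{k+1};\bar u_k) + \tfrac{M_C}{2}\|\bar u_{k+1} - \bar u_k\|_2^2$, and the remaining hypothesis $\gamma_k \le M_C^{-1}$ lets us replace $\tfrac{M_C}{2}$ by $\tfrac{1}{2\gamma_k}$, which is exactly the sufficient decrease condition~\eqref{eq:suff_decrease}. The delicate point is the step-length estimate of the previous paragraph: one must extract the full $\gamma_k^{-1}\|\bar v_k\|_2^2$ (rather than merely $\tfrac{1}{2\gamma_k}\|\bar v_k\|_2^2$) from optimality so that the threshold $\ell_{f,S}^{-1}$ alone keeps the step inside the unit ball, and one should justify that $\|\nabla f(\bar u_k)\|_2 \le \ell_{f,S}$ is legitimate for $\bar u_k \in S$, i.e.\ that the Lipschitz constant $\ell_{f,S}$ controls the gradient norm over the sub-level set. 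Everything else is a routine chaining of Lemma~\ref{lem:suff_decrease} with these two estimates.
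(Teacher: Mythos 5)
Your proof is correct, and it reaches the same two-part structure as the paper's (first show the step has length at most one so that $\bar u_{k+1}\in C=S+B_1$, then invoke Lemma~\ref{lem:suff_decrease} together with $\gamma_k\le M_C^{-1}$), but it derives the key step-length estimate $\|\bar u_{k+1}-\bar u_k\|_2\le\gamma_k\|\nabla f(\bar u_k)\|_2$ by a genuinely different route. The paper simply reads this bound off the closed-form expression~\eqref{eq:explicit_GN}, i.e.\ $\bar u_{k+1}-\bar u_k=-(\nabla\tilde x(\bar u_k)H\nabla\tilde x(\bar u_k)^\top+G+\gamma_k^{-1}\idm_{\horizon p})^{-1}\nabla f(\bar u_k)$, using that the regularized matrix has all eigenvalues at least $\gamma_k^{-1}$; this is shorter but leans on $h$ and $g$ being quadratic, which is what makes~\eqref{eq:explicit_GN} available. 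You instead obtain the same inequality variationally, combining the first-order optimality condition $-\gamma_k^{-1}\bar v_k\in\partial_{\bar v}c_f(\bar u_k+\bar v_k;\bar u_k)$ with convexity of the model at $\bar v=0$ and the identity $\nabla_{\bar v}c_f(\bar u_k+\bar v;\bar u_k)|_{\bar v=0}=\nabla f(\bar u_k)$; your care in extracting the full $\gamma_k^{-1}\|\bar v_k\|_2^2$ from optimality (rather than half of it) is exactly what makes the threshold $\ell_{f,S}^{-1}$ suffice. Your argument has the advantage of extending verbatim to any convex $h$ and $g$, not just quadratics, at the cost of a few more lines. The one point you flag yourself --- that the Lipschitz constant $\ell_{f,S}$ controls $\|\nabla f(\bar u_k)\|_2$ for $\bar u_k\in S$ --- is used identically (and equally implicitly) in the paper, so it is not a gap relative to the intended argument.
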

Combining Prop.~\ref{prop:prox_lin_conv} and Lem.~\ref{lem:suff_decrease}, we can guarantee that the iterates stay in the initial sub-level set and satisfy the sufficient decrease condition for sufficiently small step-sizes $\stepsize_k$. At each iteration the step-size can be found by a line-search guaranteeing sufficient decrease; see Appendix~\ref{app:prox_lin} for details. The final complexity of the algorithm with line-search then follows.
\begin{corollary}\label{cor:line_search}
	For a composite objective $f$ as in~\eqref{eq:composite_analysis}, the regularized Gauss-Newton method~\eqref{eq:prox_lin_algo} with a decreasing line-search starting from $\stepsize_0 \geq \hat \stepsize$ with decreasing factor $\rho$ finds an $\varepsilon$-stationary point after at most 
	\[
	\frac{2L(f(\bar u_0) - f^*)}{\varepsilon^2} + \log(\stepsize_0/\hat \stepsize)/\log(\rho^{-1})
	\]
	calls to the regularized Gauss-Newton oracle, with $\hat \stepsize $ defined in \eqref{eq:min_step_size}, $\optimobj^* = \lim_{k \rightarrow +\infty}\optimobj({\bar u}_k)$ and
	\[
	L = \max_{\stepsize \in[\hat \stepsize, \stepsize_{0}]} \stepsize (\ell_{\targetfunc,S}^2L_h  + L_g + \stepsize^{-1})^2.
	\]
\end{corollary}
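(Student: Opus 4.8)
The plan is to bootstrap three facts already in hand: the per-iteration progress bound of Proposition~\ref{prop:prox_lin_conv}, the explicit step-size floor $\hat\gamma$ of Lemma~\ref{lem:min_step} (itself resting on the quadratic error bound of Lemma~\ref{lem:suff_decrease}), and an amortized count of the backtracking line-search. I would fix the line-search to be warm-started and monotone: at iteration $k$ the step-size is the first term of $\gamma_{k-1},\rho\gamma_{k-1},\rho^2\gamma_{k-1},\dots$ (with $\gamma_{-1}=\gamma_0$) for which~\eqref{eq:suff_decrease} holds, so that $(\gamma_k)_k$ is nonincreasing.

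First I would show by induction on $k$ that $\bar u_k\in S$, that the line-search at step $k$ halts, and that $\gamma_k\in[\rho\hat\gamma,\gamma_0]$. The base case is immediate. Given $\bar u_k\in S$, Lemma~\ref{lem:min_step} ensures that any candidate step-size $\gamma\le\hat\gamma$ satisfies~\eqref{eq:suff_decrease}; since the candidates decrease geometrically to $0$, the loop halts, and because a candidate is accepted as soon as it drops below $\hat\gamma$, the accepted $\gamma_k$ is at least $\rho\hat\gamma$ (and at most $\gamma_0$). Using that $\bar v=0$ is feasible in~\eqref{eq:prox_lin_algo} gives $c_f(\bar u_{k+1};\bar u_k)+\frac{1}{2\gamma_k}\|\bar u_{k+1}-\bar u_k\|_2^2\le c_f(\bar u_k;\bar u_k)=f(\bar u_k)$, which together with~\eqref{eq:suff_decrease} yields $f(\bar u_{k+1})\le f(\bar u_k)\le f(\bar u_0)$, hence $\bar u_{k+1}\in S$. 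This closes the induction: all iterates stay in $S$, the sufficient decrease condition holds at every step, and the step-sizes remain in $[\hat\gamma,\gamma_0]$ up to the factor $\rho$ (one may cap each backtracking descent so the interval is exactly $[\hat\gamma,\gamma_0]$).

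Second, with~\eqref{eq:suff_decrease} verified at every iteration and step-sizes confined to $[\gamma_{\min},\gamma_{\max}]=[\hat\gamma,\gamma_0]$, Proposition~\ref{prop:prox_lin_conv} applies verbatim and yields $\min_{k\le N}\|\nabla f(\bar u_k)\|_2^2\le 2L(f(\bar u_0)-f^*)/(N+1)$ with $L=\max_{\gamma\in[\hat\gamma,\gamma_0]}\gamma(\ell_{\tilde x,S}^2L_h+L_g+\gamma^{-1})^2$, so $N=\lceil 2L(f(\bar u_0)-f^*)/\varepsilon^2\rceil$ iterations suffice for an $\varepsilon$-stationary point. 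Third, I would count oracle calls: each iteration costs one regularized Gauss-Newton oracle call for its accepted step plus one for each rejected backtracking candidate; because $(\gamma_k)_k$ is nonincreasing and never falls below $\hat\gamma$ (up to the factor $\rho$), the total number of rejected candidates over the whole run is at most $\log(\gamma_0/\hat\gamma)/\log(\rho^{-1})$. Summing with the $N$ accepted calls gives the claimed total.

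The hard part will be the amortized line-search accounting in the induction step: one must use the warm start together with the implication ``$\gamma\le\hat\gamma\Rightarrow$ sufficient decrease'' so that the backtracking cost is $O(\log(\gamma_0/\hat\gamma))$ over the entire run rather than per iteration, while keeping the admissible step-size interval tight enough that the constant $L$ matches the one in Proposition~\ref{prop:prox_lin_conv}. The remaining pieces are direct applications of Proposition~\ref{prop:prox_lin_conv} and Lemmas~\ref{lem:suff_decrease} and~\ref{lem:min_step}.
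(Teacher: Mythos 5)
Your proposal is correct and follows essentially the argument the paper intends: the corollary is stated as a direct combination of Proposition~\ref{prop:prox_lin_conv}, Lemmas~\ref{lem:suff_decrease} and~\ref{lem:min_step}, and the warm-started monotone backtracking of Algorithms~\ref{algo:backtrack}--\ref{algo:prox_lin_backtrack} in Appendix~\ref{app:prox_lin}, with the amortized count of rejected candidates giving the additive $\log(\stepsize_0/\hat\stepsize)/\log(\rho^{-1})$ term. Your remark about the accepted step-size only being guaranteed to lie in $[\rho\hat\gamma,\gamma_0]$ (and capping to match the stated constant $L$) is a fair refinement of a detail the paper glosses over.
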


\begin{table}[t]
	\begin{center}
		\begin{tabular}{|c|c|c|c|}
			\hline 
			& GD & ILQR & RegILQR \\ 
			\hline 
			Global convergence guarantee & Yes & No & Yes \\ 
			\hline 
			Number of calls to auto-differentiation oracle & $\tau (pd + d^2)$ & $\tau p ^3d^3$ & $\tau p ^3d^3$ \\ 
			\hline 
			Cost per call to auto-differentiation oracle & $1$ & $2d+1$ & $2d+1$ \\ 
			\hline 
		\end{tabular}
		\caption{Convergence properties and oracle costs of Gradient Descent (GD), ILQR, and regularized ILQR (RegILQR) for problem~\eqref{eq:opt_control_pb} with quadratic $h$, $g$ . The automatic-differentiation oracle cost is stated for problems of the form~\eqref{eq:last_state_ctrl} \label{tab:compa}.}
	\end{center} 
\end{table}

\subsection{Accelerated ILQR via accelerated Gauss-Newton}
In Algo.~\ref{algo:acc_prox_lin} we present an accelerated variant of the regularized Gauss-Newton algorithm that blends a regularized Gauss-Newton step and an extrapolated step to potentially capture convexity in the objective. See Appendix~\ref{app:acc_proxlin} for the proof.

\begin{restatable}{proposition}{acc}\label{prop:acc_prox_lin}
	Consider Algo.~\ref{algo:acc_prox_lin}  applied to a composite objective $f$ as in~\eqref{eq:composite_analysis} with decreasing step-sizes $(\stepsize_{k})_{k\geq 0}$ and $(\accstepsize_{k})_{k\geq 0}$. Then Algo.~\ref{algo:acc_prox_lin} satisfies the convergence of the regularized Gauss-Newton method~\eqref{eq:prox_lin_algo} with line-search as presented in Cor.~\ref{cor:line_search}. Moreover, if the convex models $c_f(\bar v; \bar u)$ defined in~\eqref{eq:cvx_model} lower bound the objective as 
	\begin{equation}\label{eq:lower_bound}
	c_f(\bar v; \bar u) \leq f(\bar v)
	\end{equation}
	for any $\bar u, \bar v \in \reals^{\horizon p}$, then after $N$ iterations of Algo.~\ref{algo:acc_prox_lin}, 
	\begin{flalign*}
	f(\bar u_N) - f^* & \leq \frac{4\accstepsize^{-1} \norm{\bar u^* - \bar u_0}^2}{(N+1)^2},
	\end{flalign*}
	where $\accstepsize = \min_{k \in \{1,\ldots N\}} \accstepsize_k$, $f^* = \min_{\bar u} f(\bar u)$ and $\bar u^* \in\argmin_{\bar u} f(\bar u)$.
\end{restatable}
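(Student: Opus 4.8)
The statement splits into two essentially independent claims, and I would prove them in turn.

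\textbf{Nonconvex guarantee.} For the first claim I would use that Algo.~\ref{algo:acc_prox_lin} carries along, at every iteration, a bona fide regularized Gauss--Newton step $\bar v_k = \text{GN}(\bar u_k;\stepsize_k)$ from the current iterate, with $\stepsize_k$ produced by the same decreasing line-search as in Cor.~\ref{cor:line_search}, and that it sets $\bar u_{k+1}$ to be the point of smallest objective value among $\bar v_k$ and the extrapolated candidate $\bar p_{k+1}$; in particular $\optimobj(\bar u_{k+1}) \le \optimobj(\bar v_k) \le \optimobj(\bar u_k)$. Hence the objective values are nonincreasing, all iterates stay in the initial sub-level set $S$, Lem.~\ref{lem:suff_decrease} and Lem.~\ref{lem:min_step} keep $\stepsize_k \ge \hat\stepsize$, and the line-search contributes the same additive $\log(\stepsize_0/\hat\stepsize)/\log(\rho^{-1})$ term. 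I would then replay the argument of Prop.~\ref{prop:prox_lin_conv}: $\tfrac{1}{\stepsize_k}$-strong convexity of the regularized model minimized at $\bar v_k$ combined with the sufficient-decrease condition~\eqref{eq:suff_decrease} gives $\optimobj(\bar u_k) - \optimobj(\bar v_k) \ge \tfrac{1}{2\stepsize_k}\norm{\bar v_k - \bar u_k}^2$, the safeguard upgrades this to $\optimobj(\bar u_k) - \optimobj(\bar u_{k+1}) \ge \tfrac{1}{2\stepsize_k}\norm{\bar v_k - \bar u_k}^2$, and the explicit formula~\eqref{eq:explicit_GN} turns $\norm{\bar v_k - \bar u_k}$ into a lower bound on $\optimobj(\bar u_k) - \optimobj(\bar u_{k+1})$ of the form $\tfrac{1}{2L}\norm{\nabla\optimobj(\bar u_k)}^2$ with the same $L$; summing over $k$ and using $\optimobj(\bar u_k)\downarrow\optimobj^*$ reproduces exactly the bound of Cor.~\ref{cor:line_search}.

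\textbf{Ingredients for the accelerated rate.} I would first observe that~\eqref{eq:lower_bound} forces $\optimobj$ to be convex: for $\bar u,\bar w$ and $\lambda\in[0,1]$ with $\bar x_\lambda = (1-\lambda)\bar u + \lambda\bar w$, the map $\bar v\mapsto c_f(\bar v;\bar x_\lambda)$ of~\eqref{eq:cvx_model} is convex (it is $\ctrlobj$ and $\regfunc$, both convex, post-composed with affine maps), so
\[
\optimobj(\bar x_\lambda) = c_f(\bar x_\lambda;\bar x_\lambda) \le (1-\lambda)\, c_f(\bar u;\bar x_\lambda) + \lambda\, c_f(\bar w;\bar x_\lambda) \le (1-\lambda)\optimobj(\bar u) + \lambda\optimobj(\bar w),
\]
using $c_f(\bar x_\lambda;\bar x_\lambda) = \optimobj(\bar x_\lambda)$ and~\eqref{eq:lower_bound}. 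The second ingredient is a one-step estimate: for a regularized Gauss--Newton step $\bar p = \text{GN}(\bar y;\accstepsize)$ satisfying~\eqref{eq:suff_decrease} and any $\bar w$,
\[
\optimobj(\bar p) \le \optimobj(\bar w) + \tfrac{1}{2\accstepsize}\norm{\bar y - \bar w}^2 - \tfrac{1}{2\accstepsize}\norm{\bar p - \bar w}^2 ,
\]
obtained from the three-point inequality for the $\tfrac{1}{\accstepsize}$-strongly convex function $\bar v\mapsto c_f(\bar v;\bar y) + \tfrac{1}{2\accstepsize}\norm{\bar v - \bar y}^2$ (minimized at $\bar p$), lower-bounding its value at $\bar p$ by $\optimobj(\bar p)$ via~\eqref{eq:suff_decrease} and upper-bounding $c_f(\bar w;\bar y)$ by $\optimobj(\bar w)$ via~\eqref{eq:lower_bound}.

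\textbf{Telescoping, and the main obstacle.} With these two facts I would run the accelerated proximal-gradient bookkeeping (in the spirit of~\citep{Paqu17} and of Beck--Teboulle's FISTA analysis with backtracking): apply the one-step estimate at the extrapolation point $\bar y_k$ and $\bar p_{k+1} = \text{GN}(\bar y_k;\accstepsize_k)$ with comparison point $\bar w = \theta_k\bar u^* + (1-\theta_k)\bar u_k$, use convexity of $\optimobj$ to get $\optimobj(\bar w) \le \theta_k\optimobj^* + (1-\theta_k)\optimobj(\bar u_k)$, invoke the monotone safeguard $\optimobj(\bar u_{k+1}) \le \optimobj(\bar p_{k+1})$, and collect terms into a Lyapunov function of the form $\Phi_k = \tfrac{\accstepsize_{k-1}}{\theta_{k-1}^2}\big(\optimobj(\bar u_k) - \optimobj^*\big) + \tfrac12\norm{\bar z_k - \bar u^*}^2$, whose non-increase $\Phi_{k+1}\le\Phi_k$ uses the $\theta$-recursion $\tfrac{1-\theta_k}{\theta_k^2}\le\tfrac{1}{\theta_{k-1}^2}$ and the fact that $(\accstepsize_k)$ is non-increasing; as in Lem.~\ref{lem:min_step}, the accelerated iterates stay in a compact set, so the line-search keeps $\accstepsize_k$ bounded below by a positive $\hat\accstepsize$. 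Unrolling $\Phi_N \le \Phi_0$ with $\bar z_0 = \bar u_0$ and the standard schedule $\theta_k \asymp \tfrac{2}{k+2}$ of Algo.~\ref{algo:acc_prox_lin} then yields $\optimobj(\bar u_N) - \optimobj^* \le \tfrac{4\accstepsize^{-1}\norm{\bar u^* - \bar u_0}^2}{(N+1)^2}$ with $\accstepsize = \min_{k\le N}\accstepsize_k$. The convexity of $\optimobj$ and the one-step estimate are routine once~\eqref{eq:lower_bound} is in hand; the delicate part is matching the precise weights $\theta_k$ and the line-searched, non-increasing $\accstepsize_k$ prescribed by Algo.~\ref{algo:acc_prox_lin} so that $\Phi_{k+1}\le\Phi_k$ holds with exactly the right constant and collapses to the stated $(N+1)^{-2}$ rate with the factor $4$, while simultaneously accommodating the monotone selection of $\bar u_{k+1}$.
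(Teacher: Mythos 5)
Your proposal is correct and follows essentially the same route as the paper's proof: the best-of-two safeguard yields the nonconvex guarantee, and the accelerated rate comes from the same three-point/strong-convexity estimate at the extrapolated point combined with the comparison point $\alpha_k \bar u^* + (1-\alpha_k)\bar u_{k-1}$, the $\alpha$-recursion, and monotonicity of $\accstepsize_k$ to telescope. The only (welcome) addition is your direct derivation of convexity of $\optimobj$ from~\eqref{eq:lower_bound}, which the paper instead cites from Lemma~8.3 of~\citep{Drus16}.
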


\begin{algorithm}[t]
	\caption{Accelerated Regularized Gauss-Newton}
	\label{algo:acc_prox_lin}
	\begin{algorithmic}[1]
		\Statex{\textbf{Input:}} Composite objective $f$ in~\eqref{eq:composite_pb} with convex models $c_f$ as  in~\eqref{eq:cvx_model}. Initial $\bar u_0 \in \reals^{\horizon p}$, desired accuracy $\varepsilon$.
		\Statex{{\bf Initialize:} $\alpha_1 := 1$, $\bar z_0 := \bar u_0$}
		\Statex{{\bf Repeat:} for $k =1, 2, \dots$} 
		\State{\underline{Compute regularized step}}
		
		\Statex{Get $\bar v_k = \text{GN}(\bar u_{k-1}; \gamma_k)$ by line-search on $\stepsize_k$ s.t.
			\vspace{-2ex}
			\[
			f(\bar v_k)  \leq c_{f}(\bar v_k;\bar u_{k-1}) + \frac{1}{2\gamma_k}\|\bar v_k - \bar u_{k-1}\|_2^2.
			\]
			\vspace{-4ex}
		}
		\State{\underline{Compute extrapolated step}}
		\Statex{- Set$\label{eq:y_def} 
			\: \:\bar y_k = \alpha_k \bar z_{k-1} + (1-\alpha_k) \bar u_{k-1}.	$	
		}
		\Statex{- Get $\bar w_k = \text{GN}(\bar y_k; \accstepsize_{k})$ by line-search on $\accstepsize_k$ s.t.
			\vspace{-2ex}
			\begin{equation}\label{eq:suff_decrease_acc}
			f(\bar w_k)  \leq c_{f}(\bar w_k;\bar y_{k}) + \frac{1}{2\delta_k}\|\bar w_k - \bar y_{k}\|_2^2.
			\end{equation}
			\vspace{-4ex}
		}
		\Statex{- Set 
			$\label{eq:z_eq}
			\: \bar z_k = \bar u_{k-1} + (\bar w_k-\bar u_{k-1})/\alpha_k.
			$}	
		\Statex{-  Pick $\alpha_{k+1} \in (0,1)$ s.t.
			$\label{eq:def_alpha}
			(1-\alpha_{k+1})/\alpha^2_{k+1} = 1/\alpha_{k}^2.
			$ }
		\State{\underline{Pick best of two steps}}
		\Statex{
			Choose $\bar u_k$ such that
			\vspace{-2ex}
			\begin{equation}\label{eq:best_of}
			f(\bar u_k) \leq \min \, \{ f(\bar v_k), f(\bar w_k) \}
			\end{equation}
			\vspace{-4ex}
		}
		\Statex{\textbf{until} $\varepsilon$-near stationarity $\norm{\nabla f(\bar u_k)} < \varepsilon$}
	\end{algorithmic}
\end{algorithm}
\subsection{Total complexity with automatic-differentiation oracles}
Previous results allow us to state the total complexity of the regularized ILQR algorithm in terms of calls to automatic differentiation oracles  as done in the following corollary that combines Cor.~\ref{cor:line_search} and Prop.~\ref{prop:acc_prox_lin} with Prop.~\ref{prop:autodiff_cplxity_quad}. A similar result can be obtained for the accelerated variant. Table~\ref{tab:compa} summarizes then convergence properties and computational costs of classical methods for discrete time non-linear control.

\begin{corollary}
	Consider problems $\mathcal{P}(\mathcal{T}_{1}(\reals^{\horizon p}, \reals^{\horizon d}), \mathcal{Q}_{L_h}(\reals^d), \mathcal{Q}_{L_g}^\horizon(\reals^{\horizon p}))$ defined in~\eqref{eq:last_state_ctrl}. The regularized Gauss-Newton method~\eqref{eq:prox_lin_algo} with a decreasing line-search starting from $\stepsize_0 \geq \hat \stepsize$ with decreasing factor $\rho$ finds an $\varepsilon$-stationary point after at most 
	\[
	(2d+1)\left(\frac{2L(f(\bar u_0) - f^*)}{\varepsilon^2} + \log(\stepsize_0/\hat \stepsize)/\log(\rho^{-1})\right)
	\]
	calls to an automatic differentiation oracle, with $\hat \stepsize $ defined in \eqref{eq:min_step_size},
	$
	L = \max_{\stepsize \in[\hat \stepsize, \stepsize_{0}]} \stepsize (\ell_{\targetfunc,S}^2L_h  + L_g + \stepsize^{-1})^2,$ $\ell_{\targetfunc,S}$ is the Lipschitz constant of $\targetfunc$ on the initial sub-level set $S = \{ {\bar u}: \optimobj({\bar u}) \leq \optimobj({\bar u}_0)\}$
	and $\optimobj^* = \lim_{k \rightarrow +\infty}\optimobj(\unknown_k)$
\end{corollary}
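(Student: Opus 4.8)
The plan is to obtain the stated estimate by composing two results already established in the excerpt: Corollary~\ref{cor:line_search}, which bounds the number of regularized Gauss-Newton oracle calls (including line-search trials) needed to reach an $\varepsilon$-stationary point, and Proposition~\ref{prop:autodiff_cplxity_quad}, which bounds the cost of a single such oracle call in terms of automatic-differentiation oracle calls. So the corollary is, as its name suggests, a routine combination, and the proof is essentially a multiplication of the two counts together with a check that the hypotheses line up.

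First I would verify that a problem in $\mathcal{P}(\mathcal{T}_{1}(\reals^{\horizon p},\reals^{\horizon d}), \mathcal{Q}_{L_h}(\reals^d), \mathcal{Q}_{L_g}^\horizon(\reals^{\horizon p}))$ of the form~\eqref{eq:last_state_ctrl} is an instance of the composite setting~\eqref{eq:composite_analysis}: here $\ctrlobj = h$ and $\regfunc = g$ are convex quadratics with smoothness constants $L_h$ and $L_g$, and $\targetfunc = \tilde x_\horizon$ is continuously differentiable since $\tilde x \in \mathcal{T}_1$, with Lipschitz constant $\ell_{\targetfunc,S}$ on the initial sub-level set $S = \{\bar u : \optimobj(\bar u) \leq \optimobj(\bar u_0)\}$. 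Hence Corollary~\ref{cor:line_search} applies verbatim, and the regularized Gauss-Newton method~\eqref{eq:prox_lin_algo} with a decreasing line-search started from $\stepsize_0 \geq \hat\stepsize$ with decreasing factor $\rho$ reaches an $\varepsilon$-stationary point after at most $N_{\mathrm{GN}} = \frac{2L(f(\bar u_0) - f^*)}{\varepsilon^2} + \log(\stepsize_0/\hat\stepsize)/\log(\rho^{-1})$ calls to the regularized Gauss-Newton oracle, with $L$ and $\hat\stepsize$ exactly as defined there, where each backtracking trial of the line-search is counted as one such oracle call.

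Next I would bound the cost of one regularized Gauss-Newton oracle call. Because $h$ and $g$ are quadratic, their second-order Taylor models are exact, so the regularized Gauss-Newton step~\eqref{eq:prox_linear_step} coincides with the Levenberg-Marquardt step~\eqref{eq:LM_step}; moreover $\mathcal{Q}_{L_h}(\reals^d) \subset \mathcal{C}_2(\reals^d)$ and $\mathcal{Q}_{L_g}^\horizon(\reals^{\horizon p}) \subset \mathcal{C}_2^\horizon(\reals^{\horizon p})$, so Proposition~\ref{prop:autodiff_cplxity_quad} applies. It gives that each such step is solved exactly by the dual conjugate-gradient scheme in $d$ iterations, each costing two automatic-differentiation calls via Lemma~\ref{lem:jac_vec_product}, plus one further call for the primal recovery, for a total of $2d+1$ automatic-differentiation oracle calls. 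The forward trajectory pass and the closed-form quadratic models needed to evaluate $f(\bar u_{k+1})$ and $c_f(\bar u_{k+1};\bar u_k)$ for the sufficient-decrease test~\eqref{eq:suff_decrease} are subsumed in this count. Multiplying the per-call cost $2d+1$ by $N_{\mathrm{GN}}$ yields the claimed bound.

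The only delicate point is bookkeeping, not analysis: one must make sure that the quantity bounded in Corollary~\ref{cor:line_search} is genuinely the number of Gauss-Newton oracle calls — i.e.\ that it already absorbs every line-search backtracking trial and not merely the accepted outer steps — and that the $2d+1$ bound of Proposition~\ref{prop:autodiff_cplxity_quad} holds for each such trial; the latter is immediate, since a backtracking trial only decreases $\stepsize_k$, hence only changes the strong-convexity weight $\stepsize_k^{-1}\idm_{\horizon p}$ in~\eqref{eq:explicit_GN} and leaves the dual conjugate-gradient argument otherwise untouched. No step presents a real obstacle; an analogous bound for the accelerated variant follows identically by using Proposition~\ref{prop:acc_prox_lin} in place of Corollary~\ref{cor:line_search}, since each iteration of Algo.~\ref{algo:acc_prox_lin} invokes $\text{GN}$ a constant number of times.
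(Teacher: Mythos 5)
Your proposal is correct and matches the paper's own (implicit) argument exactly: the paper obtains this corollary precisely by combining Corollary~\ref{cor:line_search} (the count of regularized Gauss-Newton oracle calls) with Proposition~\ref{prop:autodiff_cplxity_quad} (the $2d+1$ automatic-differentiation calls per step, applicable since quadratic $h$, $g$ make the regularized Gauss-Newton and Levenberg--Marquardt steps coincide). Your additional bookkeeping remarks about line-search trials and the accelerated variant are consistent with the paper's treatment.
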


\section{Experiments}\label{sec:exp}
We illustrate the performance of the algorithms considered in Sec.~\ref{sec:prox_lin} including the proposed accelerated regularized Gauss-Newton algorithm on two classical problems drawn from~\citep{Li04}: swing-up a pendulum, and move a two-link robot arm. 

\subsection{Control settings}
The physical systems we consider below are described by continuous dynamics of the form
\begin{align*}
\ddot z(t) = f(z(t), \dot z (t), u(t))
\end{align*}
where $z(t), \dot z(t), \ddot z(t)$ denote respectively the position, the speed and the acceleration of the system and $u(t)$ is a force applied on the system. 
The state $x(t) = (x_1(t), x_2(t))$ of the system is defined by the position $x_1(t) = z(t)$ and the speed $x_2(t) = \dot z(t)$ and the continuous cost is defined as
\begin{align*}
J(x, u) = \int_0^T h(x(t))dt + \int_0^T g(u(t))dt \quad \mbox{or} \quad J(x, u) = h(x(T)) + \int_0^T g(u(t))dt,
\end{align*}
where $T$ is the time of the movement and $h, g$ are given convex costs. The discretization of the dynamics with a time step $\delta$ starting from a given state $\hat x_0 = (z_0, 0)$ reads then
\begin{equation}\label{eq:discrete_dyn}
\begin{split}
x_{1,t+1} & = x_{1,t} + \delta x_{2,t} \\
x_{2,t+1} & = x_{2,t} + \delta f(x_{1,t}, x_{2,t}, u_t) 
\end{split}
\quad \mbox{for $t = 0,\ldots, \horizon-1$}
\end{equation}
where $\horizon = \lceil T/\delta\rceil$ and the discretized cost reads
\begin{align*}
J(\bar x, \bar u ) = \sum_{t=1}^{\horizon}h(x_t) + \sum_{t=0}^{\horizon-1} g(u_t) \quad \mbox{or} \quad J(\bar x, \bar u) = h(x_\horizon) + \sum_{t=0}^{\horizon-1} g(u_t).
\end{align*}

\begin{figure}
	\begin{center}
		\includegraphics[width=0.3\textwidth]{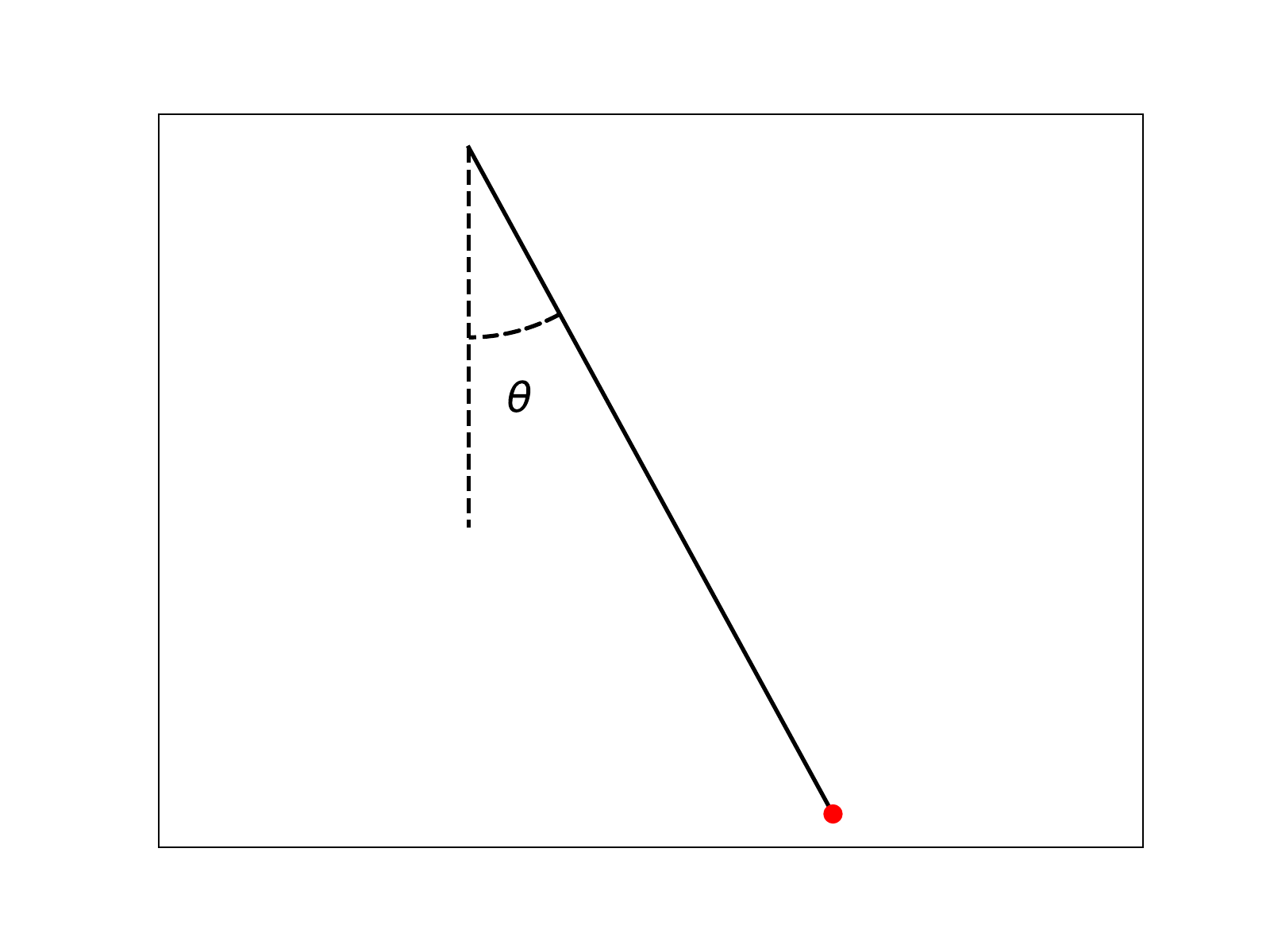}
		\includegraphics[width=0.3\textwidth]{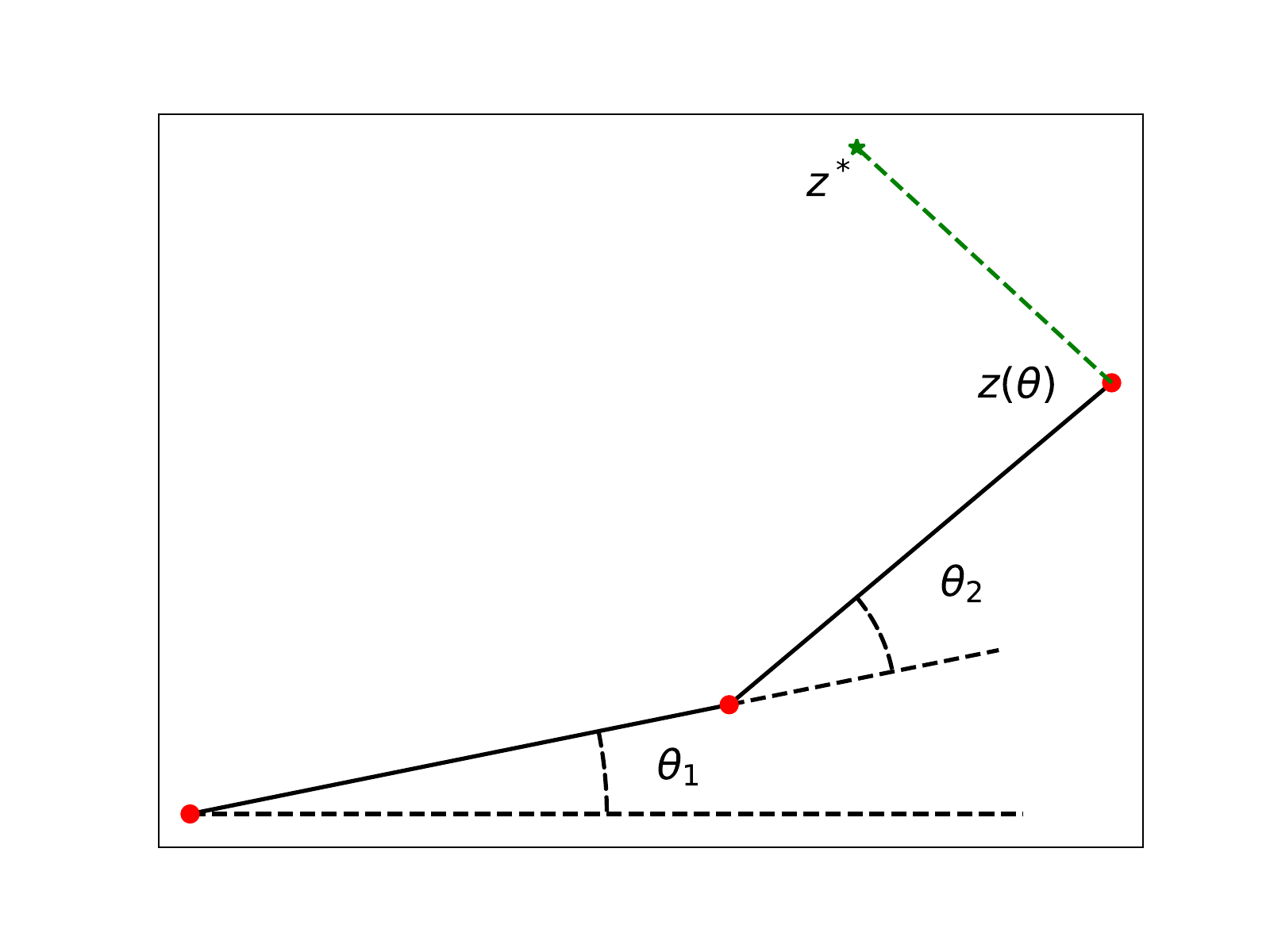}
	\end{center}
	\caption{Control settings considered. From left to right: pendulum, two-link arm robot. \label{fig:ctrl_settings}}
\end{figure}
\paragraph{Pendulum.}
We consider a simple pendulum illustrated in Fig.~\ref{fig:ctrl_settings}, where $m=1$ denotes the mass of the bob, $l=1$ denotes the length of the rod, $\theta$ describes the angle subtended by the vertical axis and the rod, and $\mu=0.01$ is the friction coefficient. The dynamics are described by
\[
\ddot \theta(t) = -\frac{g}{l}\sin \theta(t) - \frac{\mu}{ml^2} \dot\theta(t) + \frac{1}{ml^2}u(t)
\]
The goal is to make the pendulum swing up (i.e. make an angle of $\pi$ radians) and stop at a given time $T$. The cost writes as
\begin{equation}\label{eq:inverse_pendulum_dynamics}
J(x,u) = 
(\pi - x_1(T))^2 + \lambda_1 x_2(T)^2 + \lambda_2 \int_0^T u^2(t)dt,
\end{equation}
where $x(t) = (\theta(t), \dot \theta(t))$, $\lambda_1 = 0.1, \lambda_2 = 0.01$, $T = 5$. 

\paragraph{Two-link arm.}
We consider the arm model with two joints
(shoulder and elbow), moving in the horizontal plane presented in \citep{Li04} and illustrated in~\ref{fig:ctrl_settings}. The dynamics are described by
\begin{equation}\label{eq:dynamics_2_link_arm_model}
M(\theta(t)) \ddot\theta(t) + C(\theta(t),\dot \theta(t)) + B \dot \theta(t) = u(t),
\end{equation}
where $\theta = (\theta_1, \theta_2)$ is the joint angle vector, $M(\theta) \in \reals^{2\times 2}$ is a positive definite symmetric inertia matrix, $C(\theta, \dot \theta)\in \reals^2$ is a vector centripetal and Coriolis forces, $B \in \reals^{2 \times 2}$ is the joint
friction matrix, and $u(t) \in \reals^2$ is the joint torque that we control. 
We drop the dependence on $t$ for readability. The dynamics are then
\begin{equation}\label{sec:2links_arm_dynamcis}
\ddot\theta  = M(\theta)^{-1} (u - C(\theta,\dot \theta) - B \dot \theta).
\end{equation}
The expressions of the different variables and parameters are given by
\begin{align}
M(\theta) & = 
\left( 
\begin{matrix}
a_1 + 2a_2\cos \theta_2 & a_3 + a_2 \cos \theta_2 \\
a_3 + a_2\cos \theta_2 & a_3
\end{matrix}
\right) & 
C(\theta, \dot \theta) & = 
\left( 
\begin{matrix}
- \dot \theta_2(2 \dot \theta_1 + \dot \theta_2) \\
\dot \theta_1^2
\end{matrix}
\right)a_2\sin\theta_2  \label{eq:detailed_dynamics_1}\\
B & = 
\left( 
\begin{matrix}
b_{11} & b_{12} \\
b_{21} & b_{22}
\end{matrix}
\right) & &
\begin{array}{ll}
a_1 & = k_1 + k_2 + m_2 l_1^2 \\
a_2 & = m_2l_1d_2\\
a_3 & = k_2,
\end{array} \label{eq:detailed_dynamics_2}
\end{align}
where $b_{11} = b_{22} = 0.05$, $b_{12} = b_{21} = 0.025$, $l_i$ and $k_i$ are respectively the length (30cm, 33cm) and the moment of inertia (0.025kgm\textsuperscript{2} , 0.045kgm\textsuperscript{2}) of link $i$ , $m_2$ and $d_2$ are respectively the mass (1kg) and the distance (16cm) from the joint center to the center of the mass for the second link.

The goal is to make the arm reach a feasible target $\theta^*$ and stop at that point. The objective reads
\begin{equation}\label{eq:2arm_objective}
J(x, u) = \|\theta(T) -\theta^*\|^2 + \lambda_1 \|\dot \theta(T) \|^2 + \lambda_2 \int_0^T \|u(t)\|^2dt,
\end{equation} 
where $x(t) = (\theta(t), \dot \theta(t))$, $\lambda_1 = 0.1, \lambda_2 = 0.01$, $T = 5$. 

\subsection{Results}
We use the automatic differentiation capabilities of PyTorch~\citep{Pyto17} to implement the automatic differentiation oracles introduced in Sec.~\ref{sec:auto_diff}. The Gauss-Newton-type steps in Algo. 1 are computed by solving the dual problem associated as presented in Sec.~\ref{sec:auto_diff}.

In Figure~\ref{fig:exp}, we compare the convergence, in terms of function value and gradient norm, of ILQR (based on Gauss-Newton), regularized ILQR (based on regularized Gauss-Newton), and accelerated regularized ILQR (based on accelerated regularized Gauss-Newton). These algorithms were presented in Sec.~\ref{sec:prox_lin}.

For ILQR, we use an Armijo line-search to compute the next step. For both the regularized ILQR and the accelerated regularized ILQR, we use a constant step-size sequence tuned after a burn-in phase of 5 iterations.
We leave the exploration of more sophisticated line-search strategies for future work. 

The plots show stable convergence of the regularized ILQR on these problems. The proposed accelerated regularized Gauss-Newton algorithm displays stable and fast convergence. Applications of accelerated regularized Gauss-Newton algorithms to reinforcement learning problems would be interesting to explore~\citep{Rech18, Faze18, Dean18}. 

\begin{figure}
	\begin{center}
		\includegraphics[width = 0.3\linewidth]{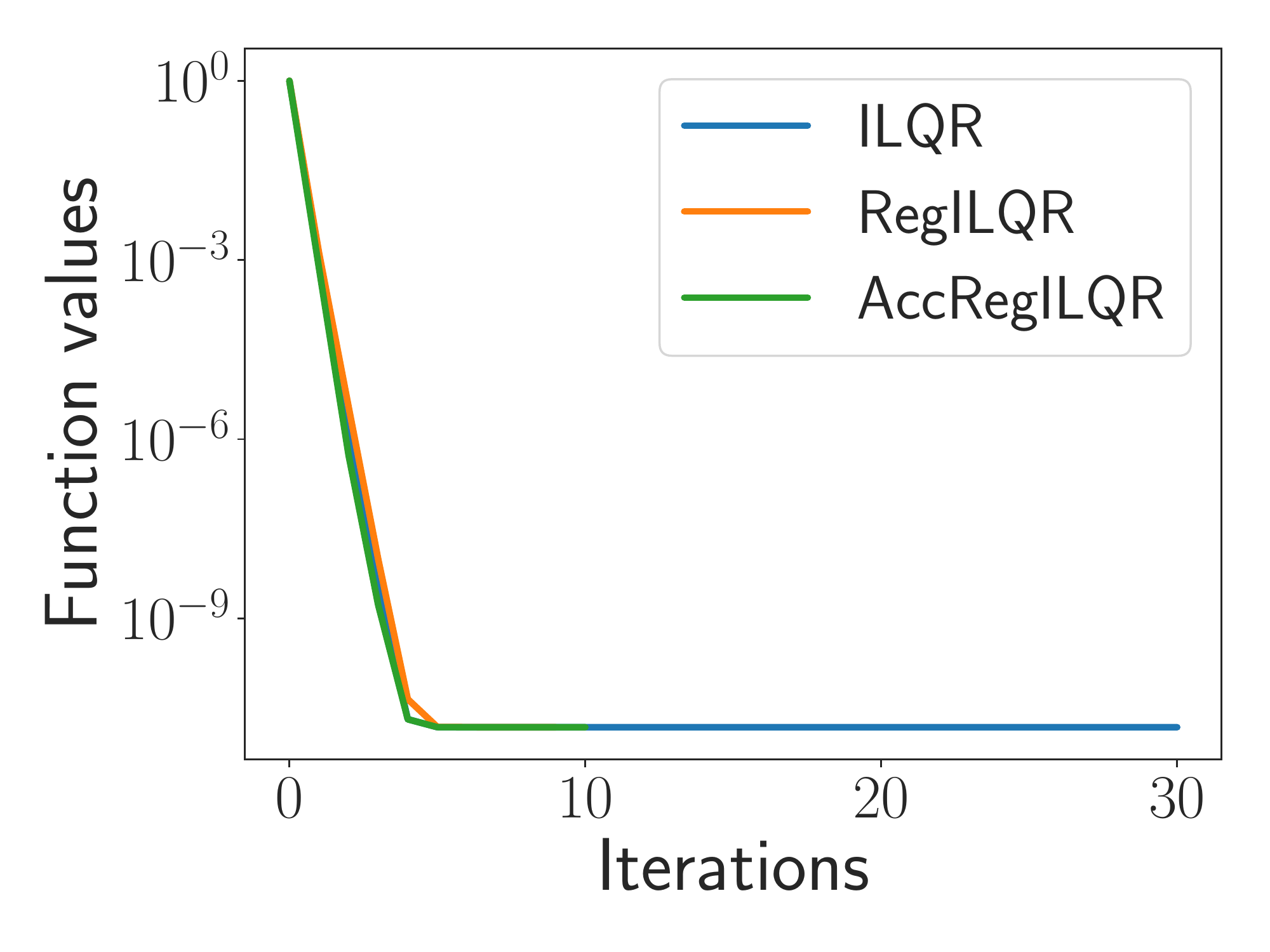}
		\includegraphics[width = 0.3\linewidth]{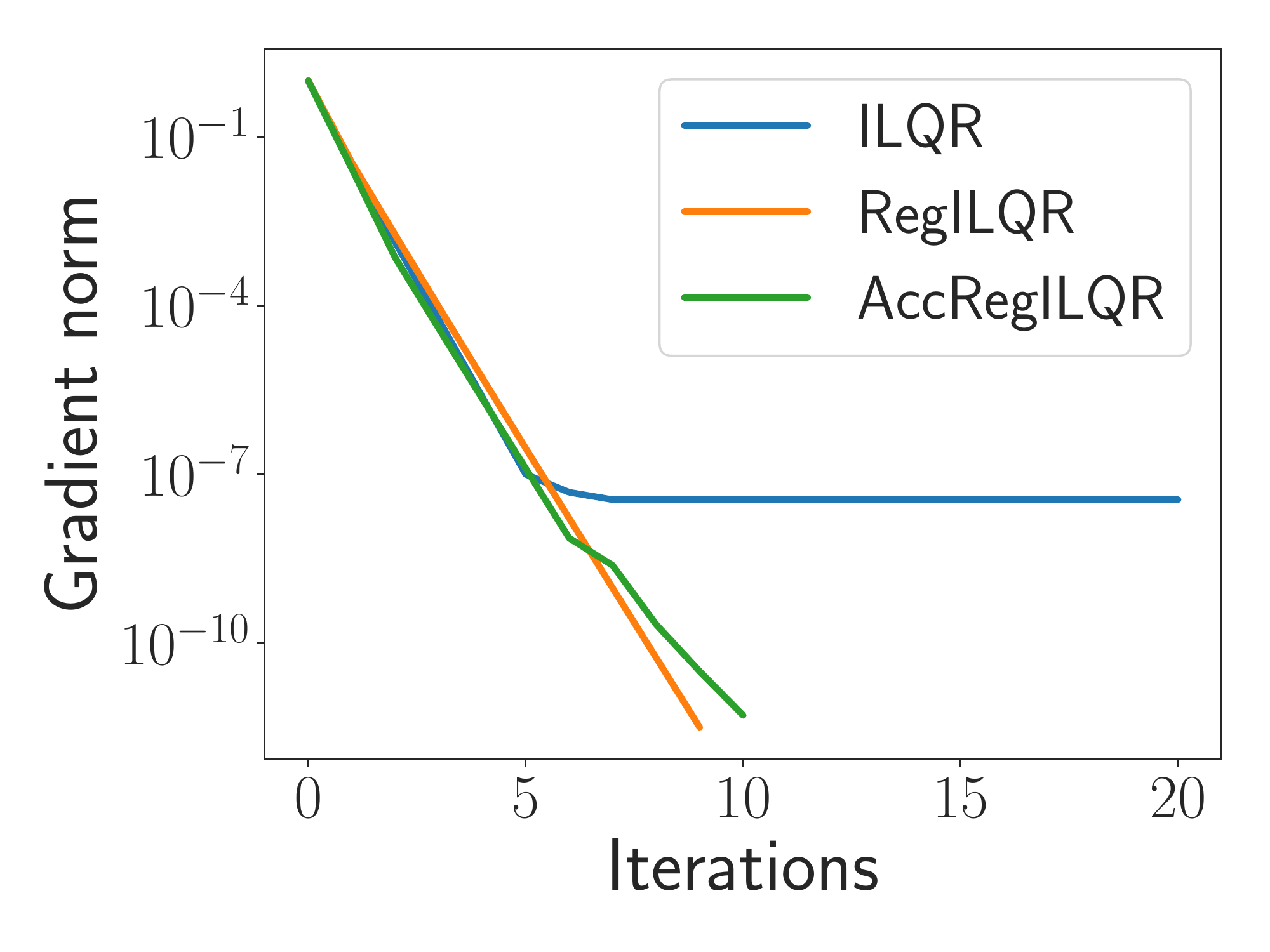}
		
		\includegraphics[width = 0.3\linewidth]{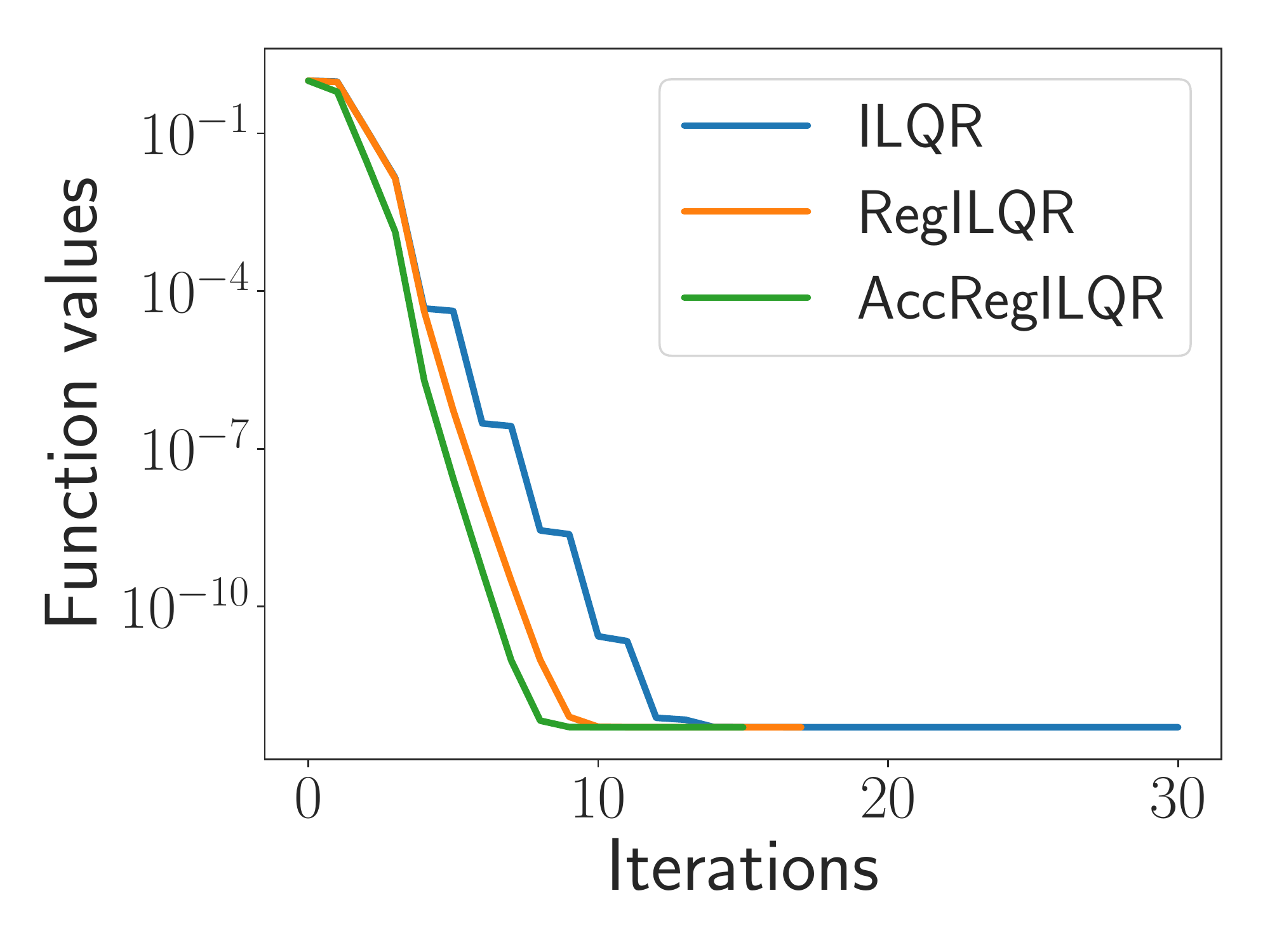}
		\includegraphics[width = 0.3\linewidth]{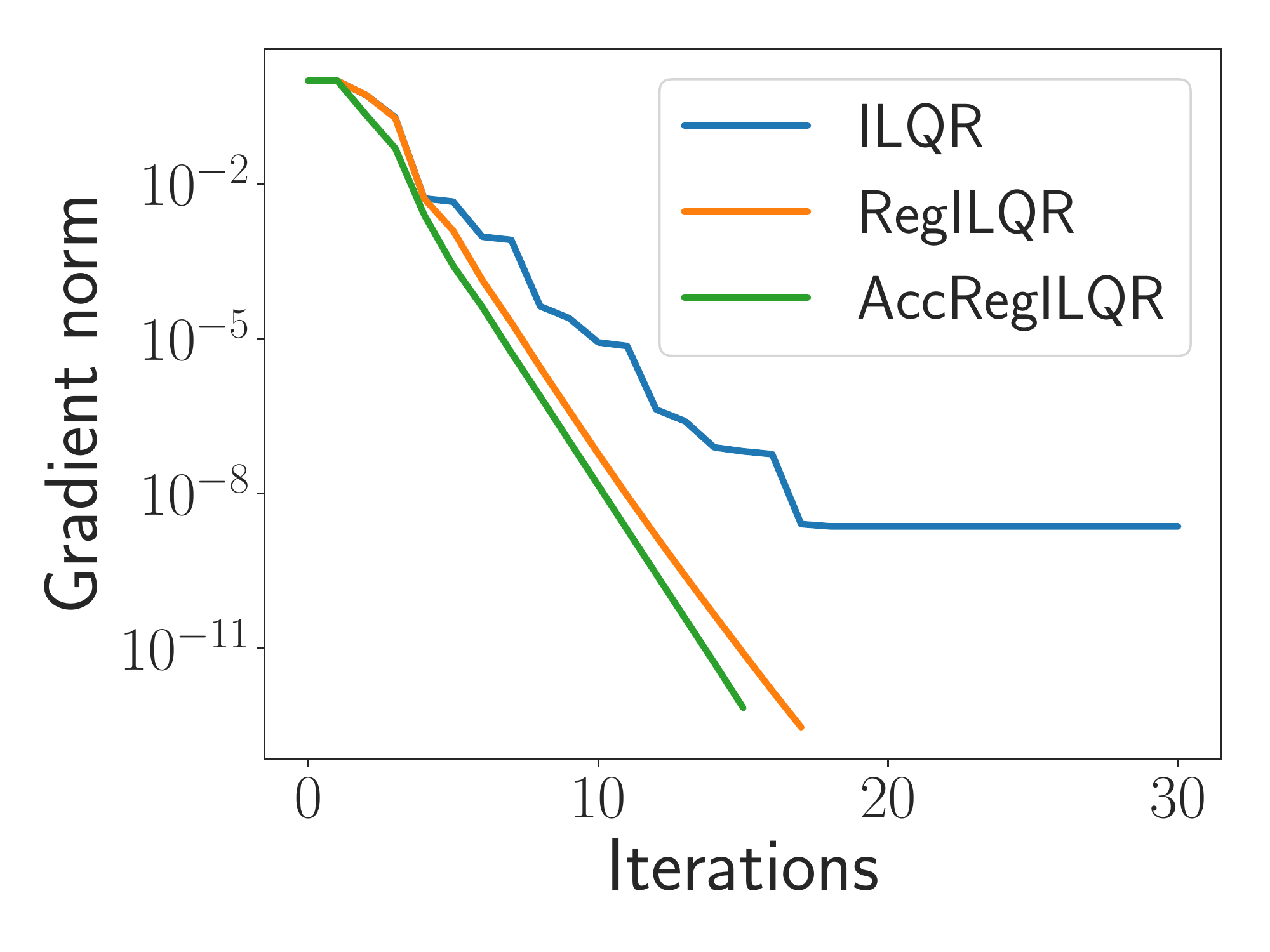}
		\caption{Convergence of ILQR, regularized ILQR and accelerated regularized ILQR on the inverted pendulum (top) and two-link arm (bottom) control problems for an horizon $\tau=100$.\label{fig:exp}}
	\end{center}
\end{figure}

\clearpage
\section*{Acknowledgements}
We would like to thank Aravind Rajeswaran for pointing out additional references.
This work was funded by NIH R01 (\#R01EB019335), NSF CCF (\#1740551), CPS (\#1544797), DMS (\#1651851), DMS (\#1839371), NRI (\#1637748), ONR, RCTA, Amazon, Google, and Honda. 

\bibliography{../bib_ctrl_dl}

\begin{thebibliography}{37}
\providecommand{\natexlab}[1]{#1}
\providecommand{\url}[1]{\texttt{#1}}
\expandafter\ifx\csname urlstyle\endcsname\relax
  \providecommand{\doi}[1]{doi: #1}\else
  \providecommand{\doi}{doi: \begingroup \urlstyle{rm}\Url}\fi

\bibitem[Abadi et~al.(2015)Abadi, Agarwal, Barham, Brevdo, Chen, Citro,
  Corrado, Davis, Dean, Devin, Ghemawat, Goodfellow, Harp, Irving, Isard, Jia,
  Jozefowicz, Kaiser, Kudlur, Levenberg, Man\'{e}, Monga, Moore, Murray, Olah,
  Schuster, Shlens, Steiner, Sutskever, Talwar, Tucker, Vanhoucke, Vasudevan,
  Vi\'{e}gas, Vinyals, Warden, Wattenberg, Wicke, Yu, and Zheng]{Tens15}
Abadi, M., Agarwal, A., Barham, P., Brevdo, E., Chen, Z., Citro, C., Corrado,
  G.~S., Davis, A., Dean, J., Devin, M., Ghemawat, S., Goodfellow, I., Harp,
  A., Irving, G., Isard, M., Jia, Y., Jozefowicz, R., Kaiser, L., Kudlur, M.,
  Levenberg, J., Man\'{e}, D., Monga, R., Moore, S., Murray, D., Olah, C.,
  Schuster, M., Shlens, J., Steiner, B., Sutskever, I., Talwar, K., Tucker, P.,
  Vanhoucke, V., Vasudevan, V., Vi\'{e}gas, F., Vinyals, O., Warden, P.,
  Wattenberg, M., Wicke, M., Yu, Y., and Zheng, X.
\newblock {TensorFlow}: Large-scale machine learning on heterogeneous systems,
  2015.
\newblock URL \url{https://www.tensorflow.org/}.

\bibitem[Bellman(1971)]{Bell67}
Bellman, R.
\newblock \emph{Introduction to the mathematical theory of control processes},
  volume~2.
\newblock Academic press, 1971.

\bibitem[Bertsekas(2005)]{Bert05}
Bertsekas, D.~P.
\newblock \emph{Dynamic programming and optimal control}.
\newblock Athena Scientific, 3rd edition, 2005.

\bibitem[Bjorck(1996)]{Bjor96}
Bjorck, A.
\newblock \emph{Numerical methods for least squares problems}.
\newblock SIAM, 1996.

\bibitem[Burke(1985)]{Burk85}
Burke, J.~V.
\newblock Descent methods for composite nondifferentiable optimization
  problems.
\newblock \emph{Mathematical Programming}, 33\penalty0 (3):\penalty0 260--279,
  1985.

\bibitem[Cartis et~al.(2011)Cartis, Gould, and Toint]{cartis2011evaluation}
Cartis, C., Gould, N. I.~M., and Toint, P.~L.
\newblock On the evaluation complexity of composite function minimization with
  applications to nonconvex nonlinear programming.
\newblock \emph{SIAM Journal on Optimization}, 21\penalty0 (4):\penalty0
  1721--1739, 2011.

\bibitem[De~O.~Pantoja(1988)]{Panto88}
De~O.~Pantoja, J.
\newblock Differential dynamic programming and {N}ewton's method.
\newblock \emph{International Journal of Control}, 47\penalty0 (5):\penalty0
  1539--1553, 1988.

\bibitem[Dean et~al.(2018)Dean, Mania, Matni, Recht, and Tu]{Dean18}
Dean, S., Mania, H., Matni, N., Recht, B., and Tu, S.
\newblock Regret bounds for robust adaptive control of the linear quadratic
  regulator.
\newblock In \emph{Advances in Neural Information Processing Systems}, pp.\
  4188--4197, 2018.

\bibitem[Dontchev et~al.(2018)Dontchev, Huang, Kolmanovsky, and
  Nicotra]{dontchev2018inexact}
Dontchev, A.~L., Huang, M., Kolmanovsky, I.~V., and Nicotra, M.~M.
\newblock Inexact {N}ewton-{K}antorovich methods for constrained nonlinear
  model predictive control.
\newblock \emph{IEEE Transactions on Automatic Control}, 2018.

\bibitem[Drusvyatskiy \& Paquette(2018)Drusvyatskiy and Paquette]{Drus16}
Drusvyatskiy, D. and Paquette, C.
\newblock Efficiency of minimizing compositions of convex functions and smooth
  maps.
\newblock \emph{Mathematical Programming}, pp.\  1--56, 2018.

\bibitem[Dunn \& Bertsekas(1989)Dunn and Bertsekas]{Dunn89}
Dunn, J.~C. and Bertsekas, D.~P.
\newblock Efficient dynamic programming implementations of {N}ewton's method
  for unconstrained optimal control problems.
\newblock \emph{Journal of Optimization Theory and Applications}, 63\penalty0
  (1):\penalty0 23--38, 1989.

\bibitem[Fazel et~al.(2018)Fazel, Ge, Kakade, and Mesbahi]{Faze18}
Fazel, M., Ge, R., Kakade, S., and Mesbahi, M.
\newblock Global convergence of policy gradient methods for the linear
  quadratic regulator.
\newblock In \emph{Proceedings of the 35th International Conference on Machine
  Learning}, volume~80, 2018.

\bibitem[Griewank \& Walther(2008)Griewank and Walther]{Grie08}
Griewank, A. and Walther, A.
\newblock \emph{Evaluating derivatives: principles and techniques of
  algorithmic differentiation}.
\newblock SIAM, 2008.

\bibitem[Gr{\"u}ne \& Pannek(2017)Gr{\"u}ne and Pannek]{grune2017nonlinear}
Gr{\"u}ne, L. and Pannek, J.
\newblock \emph{Nonlinear model predictive control}.
\newblock Springer, 2017.

\bibitem[Hansen et~al.(2013)Hansen, Pereyra, and Scherer]{Hans13}
Hansen, P.~C., Pereyra, V., and Scherer, G.
\newblock \emph{Least squares data fitting with applications}.
\newblock JHU Press, 2013.

\bibitem[Jacobson \& Mayne(1970)Jacobson and Mayne]{Jaco70}
Jacobson, D.~H. and Mayne, D.~Q.
\newblock \emph{Differential Dynamic Programming}.
\newblock Elsevier, 1970.

\bibitem[Kakade \& Lee(2018)Kakade and Lee]{Kaka18}
Kakade, S.~M. and Lee, J.~D.
\newblock Provably correct automatic sub-differentiation for qualified
  programs.
\newblock In \emph{Advances in Neural Information Processing Systems}, pp.\
  7125--7135, 2018.

\bibitem[Kaltenbacher et~al.(2008)Kaltenbacher, Neubauer, and Scherzer]{Kalt08}
Kaltenbacher, B., Neubauer, A., and Scherzer, O.
\newblock \emph{Iterative regularization methods for nonlinear ill-posed
  problems}, volume~6.
\newblock Walter de Gruyter, 2008.

\bibitem[LeCun et~al.(1988)LeCun, Touresky, Hinton, and
  Sejnowski]{lecun1988theoretical}
LeCun, Y., Touresky, D., Hinton, G., and Sejnowski, T.
\newblock A theoretical framework for back-propagation.
\newblock In \emph{Proceedings of the 1988 connectionist models summer school},
  volume~1, pp.\  21--28, 1988.

\bibitem[Lewis \& Wright(2016)Lewis and Wright]{MR3511391}
Lewis, A.~S. and Wright, S.~J.
\newblock A proximal method for composite minimization.
\newblock \emph{Mathematical Programming}, 158:\penalty0 501--546, 2016.

\bibitem[Li \& Todorov(2004)Li and Todorov]{Li04}
Li, W. and Todorov, E.
\newblock Iterative linear quadratic regulator design for nonlinear biological
  movement systems.
\newblock In \emph{1st International Conference on Informatics in Control,
  Automation and Robotics}, volume~1, pp.\  222--229, 2004.

\bibitem[Li \& Todorov(2007)Li and Todorov]{Li07}
Li, W. and Todorov, E.
\newblock Iterative linearization methods for approximately optimal control and
  estimation of non-linear stochastic system.
\newblock \emph{International Journal of Control}, 80\penalty0 (9):\penalty0
  1439--1453, 2007.

\bibitem[Liao \& Shoemaker(1991)Liao and Shoemaker]{Liao91}
Liao, L.-Z. and Shoemaker, C.~A.
\newblock Convergence in unconstrained discrete-time differential dynamic
  programming.
\newblock \emph{IEEE Transactions on Automatic Control}, 36\penalty0
  (6):\penalty0 692--706, 1991.

\bibitem[Liao \& Shoemaker(1992)Liao and Shoemaker]{Liao92}
Liao, L.-Z. and Shoemaker, C.~A.
\newblock Advantages of differential dynamic programming over {N}ewton's method
  for discrete-time optimal control problems.
\newblock Technical report, Cornell University, 1992.

\bibitem[Mayne(1966)]{Mayn66}
Mayne, D.
\newblock A second-order gradient method for determining optimal trajectories
  of non-linear discrete-time systems.
\newblock \emph{International Journal of Control}, 3\penalty0 (1):\penalty0
  85--95, 1966.

\bibitem[Nesterov(2007)]{nesterov2007modified}
Nesterov, Y.
\newblock Modified {G}auss-{N}ewton scheme with worst case guarantees for
  global performance.
\newblock \emph{Optimization Methods \& Software}, 22\penalty0 (3):\penalty0
  469--483, 2007.

\bibitem[Nocedal \& Wright(2006)Nocedal and Wright]{Noce06}
Nocedal, J. and Wright, S.~J.
\newblock \emph{Numerical Optimization}.
\newblock Springer, 2nd edition, 2006.

\bibitem[Paquette et~al.(2018)Paquette, Lin, Drusvyatskiy, Mairal, and
  Harchaoui]{Paqu17}
Paquette, C., Lin, H., Drusvyatskiy, D., Mairal, J., and Harchaoui, Z.
\newblock Catalyst for gradient-based nonconvex optimization.
\newblock In \emph{21st International Conference on Artificial Intelligence and
  Statistics}, pp.\  1--10, 2018.

\bibitem[Paszke et~al.(2017)Paszke, Gross, Chintala, Chanan, Yang, DeVito, Lin,
  Desmaison, Antiga, and Lerer]{Pyto17}
Paszke, A., Gross, S., Chintala, S., Chanan, G., Yang, E., DeVito, Z., Lin, Z.,
  Desmaison, A., Antiga, L., and Lerer, A.
\newblock Automatic differentiation in {P}y{T}orch, 2017.
\newblock URL \url{https://pytorch.org/}.

\bibitem[Recht(2018)]{Rech18}
Recht, B.
\newblock A tour of reinforcement learning: The view from continuous control.
\newblock \emph{Annual Review of Control, Robotics, and Autonomous Systems},
  2018.

\bibitem[Richter et~al.(2012)Richter, Jones, and
  Morari]{richter2012computational}
Richter, S., Jones, C.~N., and Morari, M.
\newblock Computational complexity certification for real-time {MPC} with input
  constraints based on the fast gradient method.
\newblock \emph{IEEE Transactions on Automatic Control}, 57\penalty0
  (6):\penalty0 1391--1403, 2012.

\bibitem[Sideris \& Bobrow(2005)Sideris and Bobrow]{Side05}
Sideris, A. and Bobrow, J.~E.
\newblock An efficient sequential linear quadratic algorithm for solving
  nonlinear optimal control problems.
\newblock In \emph{Proceedings of the American Control Conference}, pp.\
  2275--2280, 2005.

\bibitem[Tassa et~al.(2012)Tassa, Erez, and Todorov]{tassa2012synthesis}
Tassa, Y., Erez, T., and Todorov, E.
\newblock Synthesis and stabilization of complex behaviors through online
  trajectory optimization.
\newblock In \emph{2012 IEEE/RSJ International Conference on Intelligent Robots
  and Systems}, pp.\  4906--4913. IEEE, 2012.

\bibitem[Tassa et~al.(2014)Tassa, Mansard, and Todorov]{Tass14}
Tassa, Y., Mansard, N., and Todorov, E.
\newblock Control-limited differential dynamic programming.
\newblock In \emph{IEEE International Conference on Robotics and Automation},
  pp.\  1168--1175, 2014.

\bibitem[Todorov \& Li(2003)Todorov and Li]{Todo03}
Todorov, E. and Li, W.
\newblock Optimal control methods suitable for biomechanical systems.
\newblock In \emph{Proceedings of the 25th Annual International Conference of
  the IEEE}, volume~2, pp.\  1758--1761, 2003.

\bibitem[Todorov \& Li(2005)Todorov and Li]{Todo05}
Todorov, E. and Li, W.
\newblock A generalized iterative lqg method for locally-optimal feedback
  control of constrained nonlinear stochastic systems.
\newblock In \emph{Proceedings of the American Control Conference}, pp.\
  300--306, 2005.

\bibitem[Whittle(1982)]{whittle1982optimization}
Whittle, P.
\newblock \emph{Optimization over Time}.
\newblock John Wiley \& Sons, Inc., New York, NY, USA, 1982.

\end{thebibliography}
\bibliographystyle{icml2019}

\clearpage

\appendix
\section{Notations}\label{app:notations}
We use semicolons to denote concatenation of vectors, namely for $\horizon$ $d$-dimensional vectors $a_1, \ldots, a_\horizon \in \reals^d$, we have $(a_1;\ldots; a_\horizon) \in \reals^{\horizon d}$.
The Kronecker product is denoted $\otimes$.

\subsection{Tensors}
A tensor $\mathcal{A} = (a_{i,j,k})_{i\in\{1,\ldots, d\}, j\in \{1,\ldots,n\}, k\in \{1,\ldots, p\}} \in \reals^{d \times n \times p}$ is represented as list of matrices $\mathcal{A} = (A_1,\ldots, A_p)$ where $A_k = (a_{i,j,k})_{i\in\{1,\ldots, d\}, j\in \{1,\ldots,n\}} \in \reals^{d\times n}$ for $ k\in \{1,\ldots p\}$. Given matrices $P \in \reals^{d \times d'}, Q \in \reals^{n \times n'}, R \in \reals^{p \times p'}$, we denote
\[
\mathcal{A}[P, Q, R] = \left(\sum_{k=1}^{p} R_{k,1}P^\top A_k Q,  \ldots,  \sum_{k=1}^{p} R_{k,p'}P^\top A_k Q \right) \in \reals^{d'\times n'\times p'}
\]
If $P, Q$ or $ R$ are identity matrices, we use the symbol "$\: \cdot\: $" in place of the identity matrix. For example, we denote $\mathcal{A}[P, Q, \idm_p] = \mathcal{A}[P,Q, \cdot] = \left(P^\top A_1 Q,  \ldots,  P^\top A_p Q \right)$.
If $P, Q$ or $R$ are vectors we consider the flatten object. In particular, for $x\in \reals^d, y\in \reals^n$, we denote
\[
\mathcal{A}[x, y, \cdot] =  \left(\begin{matrix}
x^\top A_1 y\\ \vdots \\ x^\top A_py
\end{matrix}
\right) \in \reals^p
\]
rather than having $\mathcal{A}[x, y, \cdot] \in \reals^{1 \times 1\times p}$.
Similarly, for $z\in \reals^p$, we have
\[
\mathcal{A}[\cdot, \cdot, z] = \sum_{k=1}^p z_kA_k \in \reals^{d\times n}.
\]
Finally note that we have for $x\in \reals^d, y\in \reals^n$ and $R \in \reals^{p \times p'}$,
\[
\mathcal{A}(x, y, R) = \left(\sum_{k=1}^{p}x^\top A_k y R_{k,1}, \ldots, \sum_{k=1}^{p}x^\top A_k y R_{k,p'}\right)^\top = R^\top \mathcal{A}[x, y, \cdot] \in \reals^{p'}.
\]
For a tensor $\mathcal{A}= (a_{i,j,k})_{i\in\{1,\ldots, d\}, j\in \{1,\ldots,n\}, k\in \{1,\ldots, p\}} \in \reals^{d \times n \times p}$, we denote 
\[
\mathcal{A}^\pi = (a_{k,i,j})_{k\in \{1,\ldots, p\}, i\in\{1,\ldots, d\}, j\in \{1,\ldots,n\}} \in \reals^{p \times  d \times n}.
\]
the tensor whose indexes have been shifted onward. We then have for matrices $P \in \reals^{d \times d'}, Q \in \reals^{n \times n'}, R \in \reals^{p \times p'}$,
\[
(\mathcal{A}[P, Q, R])^\pi = \mathcal{A}^\pi[R, P, Q].
\]
For matrices, we have $A^\pi = A^\top$ and for one dimensional vectors $x^\pi = x$.

\subsection{Gradients}
Given a state space of dimension $d$, and control space of dimension $p$, for a real function of state and control $f: \reals^{d +p} \mapsto \reals$, whose value is denoted $f(x,u)$, we decompose its gradient $\nabla f( x,  u) \in \reals^{d +p}$ on $( x,  u)$ as a part depending on the state variables  and a part depending on the control variables  as follows 
\[
\nabla f( x,  u) =\left(
\begin{matrix}
\nabla_x f( x,  u) \\
\nabla_u f( x,  u)
\end{matrix}\right) \qquad \mbox{with} \qquad \nabla_x f(x,u) \in \reals^d, \quad \nabla_u f(x,u) \in \reals^p.
\]
Similarly we decompose its Hessian $\nabla f( x,  u) \in \reals^{(d +p)\times(d +p) }$ on blocks  that correspond to the state and control variables as follows
\begin{gather*}
\nabla^2 f( x,  u) =\left(
\begin{matrix}
\nabla_{xx} f( x,  u) & \nabla_{xu} f(x,u)\\
\nabla_{ux} f( x,  u) & \nabla_{uu} f( x,  u) 
\end{matrix}\right) 
\\
\qquad \mbox{with} \qquad \nabla_{xx} f(x,u) \in \reals^{d\times d}, \quad \nabla_{uu} f(x,u) \in \reals^{p\times p}, \nabla_{xu} f(x,u) = \nabla_{ux} f( x,  u)^\top \in \reals^{d\times p}.
\end{gather*}

For a multivariate function $f :\reals^d \mapsto \reals^n$, composed of $f^{(j)}$ real functions with $j\in \{1, \ldots n\}$, we denote $\nabla f( x) = (\nabla f^{(1)}( x), \ldots, \nabla f^{(n)}( x))\in \reals^{d \times n}$, that is  the transpose of its Jacobian on $ x$, $\nabla f( x) = (\frac{\partial f^{(j)} }{\partial x_i}( x))_{\substack{1\leq i\leq d, 1\leq j\leq n}} \in \reals^{d \times n}$. 
We represent its second order information by a tensor $\nabla^2 f( x) = (\nabla^2 f^{(1)}( x), \ldots, \nabla^2 f^{(n)}( x))\in \reals^{d \times d \times n}$

We combine previous definitions to describe the dynamic functions. Given a state space of dimension $d$, a control space of dimension $p$ and an output space of dimension $d_+$, for a dynamic function $\phi: \reals^{d + p} \mapsto \reals^{d_+}$ and a pair of control and state variable $( x,  u)$, we denote $\nabla_x \phi( x,  u) = (\nabla_x \phi^{(1)}( x,  u), \ldots, \nabla_x \phi^{(d_+)}( x,  u))$ and  we define similarly $\nabla_u \phi( x,  u)$. 
For its second order information we define  $\nabla_{xx} \phi( x,  u) = (\nabla_{xx} \phi^{(1)}( x,  u), \ldots, \nabla_{xx} \phi^{(d_+)}( x,  u))$, similarly for $\nabla_{xx} \phi( x,  u)$.
Dimension of these definitions are
\begin{gather*}
\nabla_x \phi( x,  u) \in \reals^{d \times d_+}, \quad \nabla_u \phi( x,  u) \in \reals^{p \times d_{+}}, 
\nabla_{xx} \phi( x,  u) \in \reals^{d \times d \times d_+}, \quad \nabla_{uu} \phi( x,  u) \in \reals^{p \times p \times d_{+}}\\
\nabla_{xu} \phi( x,  u) \in \reals^{d \times p \times d_+}, \quad \nabla_{ux} \phi( x,  u) \in \reals^{p \times d \times d_{+}}.
\end{gather*}

\section{Dynamic programming for linear quadratic optimal control problems}\label{app:dyn_prog}
We present the dynamic programming resolution of the Linear Quadratic control problem
\begin{align}\label{eq:LQ}
\begin{split}
\underset{\substack{x_0,\ldots,x_\horizon \in \reals^d\\u_0,\ldots,u_{\horizon-1} \in \reals^p}}{\mbox{min}}  \quad &  \sum_{t=1}^{\horizon} h_t(x_t) + \sum_{t=0}^{\horizon -1} g_t(u_t) \\
\mbox{subject to} \quad & x_{t+1} =  \dyn_t(x_t, u_t) \qquad \mbox{for $t=0,\ldots,\horizon-1$}\\
& x_0 \quad =\hat x_0,
\end{split}
\end{align}
with 
\begin{align*}
 \dyn(x, u) = \Dyn_{t,x}^\top x + \Dyn_{t,u}^\top u, \qquad
h_t(x)  = h_{t,x}^\top x + \frac{1}{2} x^\top H_{t,xx},  \quad \mbox{and} \quad
g_t(u)  = g_{t,u}^\top u  + \frac{1}{2}u^\top G_{t,uu} u,
\end{align*}
where $\Dyn_{t,x} \in \reals^{d \times d},  \Dyn_{t,u} \in \reals^{p \times d}$, $h_{t,x} \in \reals^{d},  g_{t,u} \in \reals^{p }, H_{t,xx} \in \reals^{d \times d }, G_{t, uu} \in \reals^{p \times p }$ with $H_{t,xx} \succeq 0$ and $G_{t,uu}\succ 0$. 

Dynamic programming applied to this problem is presented in Algo.~\ref{algo:LQ} it is based on the following proposition that computes cost-to-go functions as quadratics.

\begin{proposition}\label{prop:LQR_comput}
	Cost-to-go functions~\eqref{eq:cost-to-go} to minimize are quadratics of the form, for $t\in {0,\ldots,\horizon}$,
	\begin{equation}\label{eq:cost-to-go_quad}\tag{$\mathcal{H}_t$}
	c_t (x) = \frac{1}{2}x^\top C_{t,xx} x + c_{t,x}^\top x, \quad \mbox{with $C_{t,xx} \succeq 0$}.
	\end{equation}
	The optimal control at time $t$ from a state $x_t$ reads for $t\in \{0,\ldots,\horizon-1\}$,
	\[
	u^*_t(x_t) =  K_t x_t + k_t
	\]
	where $C_{t,xx}, c_{t,x}, K_t, k_t$ are defined recursively in lines~\ref{line:dyn_prog0},~\ref{line:dyn_prog1},~\ref{line:dyn_prog2},~\ref{line:dyn_prog3} of Algo.~\ref{algo:LQ}.
\end{proposition}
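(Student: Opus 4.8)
The plan is to argue by backward induction on $t$ over the Bellman recursion~\eqref{eq:Bellman}, establishing the quadratic form~\eqref{eq:cost-to-go_quad} together with the affine form of the optimal control simultaneously at each step. For the base case $t=\horizon$ the cost-to-go is just the final cost, $c_\horizon(x) = h_\horizon(x) = h_{\horizon,x}^\top x + \tfrac12 x^\top H_{\horizon,xx} x$, which is already of the form~\eqref{eq:cost-to-go_quad} with $C_{\horizon,xx} = H_{\horizon,xx}\succeq 0$ and $c_{\horizon,x} = h_{\horizon,x}$, and there is no control to choose.

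For the inductive step, assume $c_{t+1}$ has the form~\eqref{eq:cost-to-go_quad} with $C_{t+1,xx}\succeq 0$. Substituting the linear dynamics $\ell_t(x,u) = \Dyn_{t,x}^\top x + \Dyn_{t,u}^\top u$ into the Bellman equation gives
\[
c_t(x) = h_t(x) + \min_{u}\left\{ g_t(u) + c_{t+1}\big(\Dyn_{t,x}^\top x + \Dyn_{t,u}^\top u\big)\right\}.
\]
The bracketed term is a quadratic in $u$ with Hessian $G_{t,uu} + \Dyn_{t,u} C_{t+1,xx}\Dyn_{t,u}^\top$. Since $G_{t,uu}\succ 0$ and $C_{t+1,xx}\succeq 0$, this Hessian is positive definite, so the inner problem is a strongly convex quadratic with a unique minimizer obtained by setting its gradient in $u$ to zero; solving the resulting linear system yields $u_t^*(x) = K_t x + k_t$ with $K_t = -\left(G_{t,uu} + \Dyn_{t,u}C_{t+1,xx}\Dyn_{t,u}^\top\right)^{-1}\Dyn_{t,u}C_{t+1,xx}\Dyn_{t,x}^\top$ and a corresponding offset $k_t$. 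Plugging this optimal $u$ back in — using that the minimum value of $\tfrac12 u^\top A u + b(x)^\top u + c(x)$ equals $c(x) - \tfrac12 b(x)^\top A^{-1} b(x)$ with $b(x)$ affine in $x$ — produces a quadratic expression in $x$, which added to $h_t(x)$ gives $c_t$ of the form~\eqref{eq:cost-to-go_quad}; matching coefficients identifies $C_{t,xx}$ and $c_{t,x}$, and these are precisely the recursions in Algo.~\ref{algo:LQ}.

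It remains to check $C_{t,xx}\succeq 0$. The cleanest route is to note that $c_t(x)$ is the partial minimization over $u$ of the jointly convex quadratic $(x,u)\mapsto h_t(x) + g_t(u) + c_{t+1}(\Dyn_{t,x}^\top x + \Dyn_{t,u}^\top u)$ — convex because $H_{t,xx}\succeq 0$, $G_{t,uu}\succ 0$, and $c_{t+1}$ is convex by the induction hypothesis, with convexity preserved under composition with a linear map — and partial minimization of a convex function is convex, so $c_t$ is convex and hence $C_{t,xx}\succeq 0$. Equivalently one can identify $C_{t,xx}$ as a Schur complement of the positive semidefinite joint Hessian with positive definite pivot block $G_{t,uu} + \Dyn_{t,u}C_{t+1,xx}\Dyn_{t,u}^\top$ and invoke the standard fact that such Schur complements are PSD. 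The only genuine subtlety is this positive-definiteness of the inner Hessian: it is what guarantees both that the minimizer exists (hence the recursion is well-defined) and that the PSD structure propagates; everything else reduces to a routine coefficient computation.
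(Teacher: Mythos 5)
Your proof is correct, and its overall skeleton — backward induction on the Bellman recursion, expansion of the inner problem as a quadratic in $u$ with Hessian $G_{t,uu} + \Dyn_{t,u}C_{t+1,xx}\Dyn_{t,u}^\top \succ 0$, explicit minimization, and coefficient matching against Algo.~\ref{algo:LQ} — coincides with the paper's. The one place where you genuinely diverge is the proof that $C_{t,xx}\succeq 0$, which is the only nontrivial step. The paper establishes it by brute force: it writes $C_{t,xx} = W_{t,xx} - W_{t,ux}^\top W_{t,uu}^{-1}W_{t,ux}$, factors $C_{t+1,xx}$ through a matrix square root, and applies the Sherman--Morrison--Woodbury identity to exhibit $C_{t,xx}$ as $H_{t,xx}$ plus an explicitly PSD term. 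You instead observe that $c_t$ is the partial minimization over $u$ of a jointly convex quadratic (equivalently, that $C_{t,xx}$ is the Schur complement of the PSD joint Hessian with respect to the positive definite block $W_{t,uu}$), and invoke preservation of convexity under partial minimization. Both arguments are valid; yours is shorter and more conceptual, and correctly isolates the positive definiteness of the inner Hessian as the crux that makes both the recursion well-defined and the PSD property propagate. What the paper's computation buys in exchange is an explicit PSD factorization of $C_{t,xx}$ in terms of $G_{t,uu}^{-1}$ rather than $W_{t,uu}^{-1}$, which makes the nonnegativity visible term by term; but as a proof of the proposition your route is complete as stated.
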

\begin{proof}
We prove recursively~\eqref{eq:cost-to-go_quad}. By definition of the cost-to-function, we have
\begin{align*}
c_\horizon (x) & =  h_\horizon(x), \quad \mbox{so} \quad  C_{\horizon,xx} = H_{\horizon,xx}, \quad c_{\horizon,x} = h_{\horizon,x},
\end{align*}
and by assumption on the costs $C_{\horizon,xx} = H_{\horizon,xx} \succeq 0$, which ensures $(\mathcal{H}_\horizon)$.

Then for $0\leq t\leq \horizon-1$, assume $(\mathcal{H}_{t+1})$, we search to compute
\begin{align*}
c_t (x)  = h_t(x) + \min_u \{ g_t(u) +  c_{t+1}(\dyn_t(x, u))\}
\end{align*}
To follow the computations, we drop the dependency on time and denote by superscript $'$ the quantities at time $t+1$, e.g. $ c' =  c_{t+1}$.
We therefore search an expression for 
\begin{equation}\label{eq:comput_W}
c(x)  = h(x) + \min_u \{ g(u) +  c'( \dyn(x, u)) \} = \min_u W(x,u).
\end{equation}
The function $W$ is a quadratic in $(x,u)$ of the form
\[
W(x,u) =  w_x^\top x +  w_u^\top u +  \frac{1}{2} x^\top W_{xx} x + \frac{1}{2}u^\top W_{uu} u + u^\top  W_{ux} x.
\]
Developing the terms in \eqref{eq:comput_W} we have
\begin{align*}
W(x,u) & =   h_{x}^\top x + g_{u}^\top u +  \frac{1}{2} x^\top H_{xx} x + \frac{1}{2}u^\top G_{uu} u  \\
& + \frac{1}{2}(  \Dyn_{x}^\top x + \Dyn_{u}^\top u)^\top C'_{xx}( \Dyn_{x}^\top x + \Dyn_{u}^\top u) + ( \Dyn_{x}^\top x + \Dyn_{u}^\top u)^\top c'_{x}.
\end{align*}
By identification  and using that $C'_{xx}$ is symmetric, we get 
\begin{align*}
w_x & = g_x + \Dyn_x c'_x  & W_{xx} & = H_{xx} + \Dyn_x C'_{xx}\Dyn_x^\top \\
w_u & = g_u + \Dyn_u c'_x  & W_{uu} & = G_{uu} + \Dyn_u C'_{xx}\Dyn_u^\top  \\
& & W_{ux} & = \Dyn_u C'_{xx}\Dyn_x^\top  .
\end{align*}
By assumption $G_{uu} \succ 0$ and by $(H_{t+1})$, $C'_{xx} \succeq 0$, therefore $W_{uu} \succ 0$, minimization in $u$ in \eqref{eq:comput_W} is possible and reads
\begin{align*}
c (x) & =w_x^\top x + \frac{1}{2} x^\top W_{xx} x - \frac{1}{2} (W_{ux} x +w_u)^\top W_{uu}^{-1} (W_{ux} x +w_u),
\end{align*}
with optimal control variable
\[
u^*(x) =  - W_{uu}^{-1} (W_{ux} x +w_u).
\]
Cost-to-go $c$ is then defined by (ignoring the constant terms for the minimization)
\begin{align*}
c_x & = w_x - W_{ux}^\top W_{uu} ^{-1} w_u\\
C_{xx} & = W_{xx} - W_{ux}^\top W_{uu}^{-1} W_{ux}.
\end{align*}
Denote then $C_{xx}^{' 1/2}$ a squared root matrix of $C'_{xx} \succeq 0$ such that $(C_{xx}^{' 1/2})^\top C_{xx}^{' 1/2} = C'_{xx}$. 
Developing the terms in the last equation gives
\begin{align*}
C_{xx} & = W_{xx} - W_{ux}^\top W_{uu}^{-1} W_{ux} \\
& = H_{xx} + \Dyn_x C'_{xx}\Dyn_x^\top - \Dyn_x C'_{xx} \Dyn_u^\top ( G_{uu} + \Dyn_u C'_{xx}\Dyn_u^\top )^{-1} \Dyn_u C'_{xx}\Dyn_x^\top \\
& = H_{xx} + \Dyn_x C_{xx}^{' 1/2 \top}\left( I - C_{xx}^{' 1/2} \Dyn_u^\top ( G_{uu} + \Dyn_u C_{xx}^{'}\Dyn_u^\top )^{-1} \Dyn_u C_{xx}^{' 1/2 \top} \right) C_{xx}^{' 1/2} \Dyn_x^\top \\
& = H_{xx} + \Dyn_x C_{xx}^{' 1/2 \top}\left( I + C_{xx}^{' 1/2}\Dyn_u^\top G_{uu}^{-1}\Dyn_u C_{xx}^{' 1/2 \top}\right)^{-1}C_{xx}^{' 1/2} \Dyn_x^\top\succeq 0,
\end{align*}
where we used Sherman-Morrison-Woodbury's formula in the last equality. This therefore proves $(\mathcal{H}_t)$ and the recurrence.
\end{proof}

\begin{algorithm}[H]
	\caption{Dynamic Programming for Linear Dynamics, Quadratic  Convex Costs
		\eqref{eq:LQ} \label{algo:LQ}}
	\begin{algorithmic}[1]
		\Require{Initial state $\hat x_0$, \\
			Quadratic costs defined by  $h_{t,x}, H_{t,xx}$ with $H_{t,xx} \succeq 0$ for $t=1,\ldots,\horizon$ and  $g_{t,u}, G_{t,uu} $ with $G_{t,uu} \succ 0$,  for $t=0,\ldots,\horizon-1$\\
			Linear dynamics defined by $\Dyn_{t,x}, \Dyn_{t,u}$ for $t=0,\ldots,\horizon-1$}
		\Statex{\textbf{Backward pass}}
		\State{\textbf{Initialize} $C_{\horizon,xx} = H_{\horizon,xx}, c_{\horizon,x} = h_{\horizon,x}$ \label{line:dyn_prog0}}
		\For{$t = \horizon-1,\ldots, 0$ }
		\State{
			Define
			\begin{align*}
			w_{t,x} & = h_{t,x} + \Dyn_{t,x} c_{t+1,x} & W_{t,xx} & = H_{t,xx} + \Dyn_{t,x} C_{t+1,xx}\Dyn_{t,x}^\top \\
			w_{t,u} & = g_{t,u} + \Dyn_{t,u} c_{t+1,u} & W_{t,uu} & = G_{t,uu} + \Dyn_{t,u} C_{t+1,xx}\Dyn_{t,u}^\top \\
			& & W_{t,ux} & = \Dyn_{t,u} C_{t+1,xx}\Dyn_{t,x}^\top 
			\end{align*}
			\label{line:dyn_prog1}
		}
		\State{Compute
			\begin{align*}
			C_{t,xx} = W_{t,xx} - W_{t,ux}^\top W_{t,uu}^{-1} W_{t,ux} \qquad  \qquad
			c_{t,x}  = w_{t,x} - W_{t,ux}^\top W_{t,uu} ^{-1} w_{t,u}
			\end{align*}
			\label{line:dyn_prog2}
		}
		\State{Store \begin{align*}
			K_t = - W_{t,uu}^{-1} W_{t,ux} \qquad \qquad
			k_t = - W_{t,uu}^{-1} w_{t,u}
			\end{align*}
			\label{line:dyn_prog3}
		}
		\EndFor
		\Statex{\textbf{Roll-out pass}}
		\State{\textbf{Initialize} $ x_0 = \hat x_0$}
		\For{$t =0,\ldots,\horizon-1$}
		\begin{align*}
		u_t^* = K_t x_t + k_t \qquad \qquad
		x_{t+1} = \Dyn_{t,x}^\top x_t + \Dyn_{t,u}^\top u_t^*
		\end{align*}
		\EndFor
		\Ensure{Optimal $u_0^*,\ldots,u_{\horizon-1}^*$ for~\eqref{eq:LQ}}
	\end{algorithmic}
\end{algorithm}
\clearpage

\section{Control algorithms detailed}\label{app:compa_ILQR}
We detail the complete implementation of the control algorithms presented in Section~\ref{sec:prox_lin}. We detail the simple implementations of ILQR~\citep{Li04}, ILQG~\citep{Todo05, Li07},  the variant of ILQG~\citep{tassa2012synthesis} and DDP~\citep{Mayn66}. Various line-searches have been proposed as in e.g.~\citep{tassa2012synthesis}. We leave their analysis for future work.

\subsection{ILQR \citep{Li04} and regularized ILQR }
ILQR and regularized ILQR  amount to solve
\begin{align}
\min_{\substack{y_0, \ldots y_\horizon \in \reals^d \\ v_0,\ldots, v_{\horizon-1} \in \reals^p} } &
\sum_{t=1}^{\horizon} q_{h_t}(x_t + y_t; x_t)  + \sum_{t=0}^{\horizon-1} q_{g_t}(u_t+v_t; u_t)+ \textcolor{puorange}{ \frac{1}{ \bm {2\stepsize}}\|\bm {v_t}\|_2^2} \label{eq:ILQR_regularized}\\
\mbox{\textup{subject to}} \quad & y_{t+1} = \Dyn_{t,x}^\top y_t + \Dyn_{t,u}^\top v_t \nonumber \\
& y_0 = 0, \nonumber
\end{align}
where $\Dyn_{t,x} = \nabla_x \dyn_t(x_t, u_t)$, $\Dyn_{t,u} = \nabla_u \dyn_t(x_t, u_t)$, $x_t = \tilde x_t(\bar u)$ and
\begin{align*}
q_{h_t}(x_t + y_t; x_t) & = h_t(x_t) + h_{t,x}^\top y_t  +  \frac{1}{2}y_t^\top H_{t,xx} y_t
\\  q_{g_t}(u_t + v_t; u_t) &  = g_t(u_t) + g_{t,u}^\top v_t  +  \frac{1}{2}v_t ^\top  G_{t,uu} v_t,
\end{align*}
where $h_{t,x} = \nabla h_t(x_t)$, $H_{t,xx} = \nabla^2 h_t(x_t) \succeq 0$, $g_{t,u} = \nabla g_t(u_t)$, $G_{t,uu} = \nabla^2 g_t(u_t) \succeq 0$.

For $\red{\bm \stepsize}=+\infty$, this amounts to the ILQR method. If $h$, $g$ are quadratics this is a Gauss-Newton step, otherwise it amounts to a generalized Gauss-Newton step. 
For $\red{\bm \stepsize}<+\infty$, if $h$, $g$ are quadratics this amounts to a regularized Gauss-Newton step~\eqref{eq:prox_linear_step}, otherwise it amounts to a Levenberg-Marquardt step~\eqref{eq:LM_step}.

The steps~\eqref{eq:ILQR_regularized} are detailed in Algo.~\ref{algo:ILQR_step} based on the derivations made in Section~\ref{app:dyn_prog}. The complete methods are detailed in Algo.~\ref{algo:ILQR} for the ILQR algorithm and in Algo.~\ref{algo:ILQR_reg} for the regularized step. For the classical ILQR method, an Armijo line-search can be used to ensure decrease of the objective as we present in Algorithm~\ref{algo:ILQR}. Different line-searches were proposed, like the one used in Differential dynamic Programming (see below). A line-search for the regularized ILQR is proposed in Algo.~\ref{algo:prox_lin_backtrack} based on the regularized Gauss-Newton method's analysis. Note that theoretically a constant step-size can also be used a shown in Sec.~\ref{sec:prox_lin}. More sophisticated line-searches with proven convergence rates are left for future work. Note that they were experimented in~\citep{Li07} for the ILQG method.

\subsection{ILQG~\citep{Todo05, Li07} and regularized ILQG}
The ILQG method as presented in~\citep{Todo05, Li07} consists in approximating the linearized trajectory by a Gaussian as presented in Sec.~\ref{sec:oracles_ctrl} and solve the corresponding dynamic programming. As for ILQR we add a proximal term to account for the inaccuracy of the model. 
Formally it amounts to solve 
\begin{align}
&\min_{\substack{y_0, \ldots y_\horizon \in \reals^d \\ v_0,\ldots, v_{\horizon-1} \in \reals^p} } \quad  
\sum_{t=1}^{\horizon} \Expect_{\bar w}\left[q_{h_t}(x_t + y_t; x_t)\right]  + \sum_{t=0}^{\horizon-1}  q_{g_t}(u_t+v_t; u_t) + \textcolor{puorange}{ \frac{1}{ \bm {2\stepsize}}\|\bm {v_t}\|_2^2} \label{eq:LQG_step}\\
&\mbox{\textup{s.t.}} \quad  y_{t+1} = \Dyn_{t,x}^\top y_t + \Dyn_{t,u}^\top v_t + \Dyn_{t, w}^\top w_t  + \Ddyn_{t,u, w}[ v_t, w_t, \cdot], \nonumber\\
&\phantom{\mbox{\textup{s.t}} \quad \quad \: } y_0 = 0,  \nonumber
\end{align}
where $\Dyn_{t,x} = \nabla_x \dyn_t(x_t, u_t, 0)$, $\Dyn_{t,u} = \nabla_u \dyn_t(x_t, u_t, 0)$,  $\Dyn_{t,w} = \nabla_w \dyn_t(x_t, u_t, 0)$,  $\Ddyn_{t, u, w} =\nabla_{u w}^2 \dyn_t(x_t, u_t, 0) $, $x_t =  \tilde x_t(\bar u, 0)$, and
\begin{align*}
q_{h_t}(x_t + y_t; x_t) & = h_t(x_t) + h_{t,x}^\top y_t  +  \frac{1}{2}y_t^\top H_{t,xx} y_t
\\  q_{g_t}(u_t + v_t; u_t) &  = g_t(u_t) + g_{t,u}^\top v_t  +  \frac{1}{2}v_t ^\top  G_{t,uu} v_t,
\end{align*}
where $h_{t,x} = \nabla h_t(x_t)$, $H_{t,xx} = \nabla^2 h_t(x_t) \succeq 0$, $g_{t,u} = \nabla g_t(u_t)$, $G_{t,uu} = \nabla^2 g_t(u_t) \succeq 0$.

The classical ILQG algorithm did not take into account a regularization as presented in Algo.~\ref{algo:ILQG} where we present an Armijo line-search though other line-searches akin to the ones made for DDP are possible. Its regularized version is presented in Algo.~\ref{algo:ILQG_reg}. They are based on solving the above problem at each step as presented in Algo.~\ref{algo:ILQG_step}. It is based on the following resolution of the dynamic programming problem.
\begin{proposition}
	Cost-to-go functions for problem~\eqref{eq:LQG_step} are quadratics of the form, for $t\in {0,\ldots,\horizon}$,
	\begin{equation}
	c_t (x) = \frac{1}{2}x^\top C_{t,xx} x + c_{t,x}^\top x, \quad \mbox{with $C_{t,xx} \succeq 0$}.
	\end{equation}
	The optimal control at time $t$ from a state $x_t$ reads for $t\in \{0,\ldots,\horizon-1\}$,
	\[
	u^*_t(x_t) =  K_t x_t + k_t
	\]
	where $C_{t,xx}, c_{t,x}, K_t, k_t$ are defined recursively in Algo.~\ref{algo:ILQG_step}.
\end{proposition}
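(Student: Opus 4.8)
The plan is to follow the backward-induction scheme of the proof of Proposition~\ref{prop:LQR_comput}, the only genuinely new ingredient being the Gaussian expectation over the immediate noise $w_t$. I would prove by downward induction on $t\in\{\horizon,\ldots,0\}$ that the cost-to-go $c_t$ — defined through the noisy Bellman recursion obtained from~\eqref{eq:Bellman} by replacing $\valuefunc_{t+1}(\ell_t(\hat y_t,v_t))$ with $\Expect_{w_t}[c_{t+1}(\Dyn_{t,x}^\top\hat y_t+\Dyn_{t,u}^\top v_t+\Dyn_{t,w}^\top w_t+\Ddyn_{t,u,w}[v_t,w_t,\cdot])]$ and the models by the quadratics of~\eqref{eq:LQG_step} — is a quadratic $c_t(\hat y_t)=\tfrac12\hat y_t^\top C_{t,xx}\hat y_t+c_{t,x}^\top\hat y_t$ (up to an additive constant, immaterial for the subsequent minimizations) with $C_{t,xx}\succeq0$, and that the minimizing control is affine in $\hat y_t$. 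The base case $t=\horizon$ is immediate since $c_\horizon(\hat y_\horizon)=q_{h_\horizon}(x_\horizon+\hat y_\horizon;x_\horizon)$ up to a constant and has Hessian $H_{\horizon,xx}\succeq0$.

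For the inductive step, assume the claim at $t+1$. The key observation is that, for fixed $v_t$, the term $\Ddyn_{t,u,w}[v_t,w_t,\cdot]$ is linear in $w_t$, so the random part of the next state is $D_t(v_t)\,w_t$ for a $d\times q$ matrix $D_t(v_t)=\Dyn_{t,w}^\top+(\text{a term linear in }v_t)$. Writing the deterministic part as $\xi_t=\Dyn_{t,x}^\top\hat y_t+\Dyn_{t,u}^\top v_t$ and using $\Expect_{w_t}[w_t]=0$, $\Expect_{w_t}[w_tw_t^\top]=\idm_q$, one gets $\Expect_{w_t}[c_{t+1}(\xi_t+D_t(v_t)w_t)]=c_{t+1}(\xi_t)+\tfrac12\Tr\big(D_t(v_t)^\top C_{t+1,xx}D_t(v_t)\big)$; the cross term vanishes because $w_t$ is centered, as does the contribution of the linear part $c_{t+1,x}^\top D_t(v_t)w_t$. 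The trace term is a convex quadratic in $v_t$ whose purely quadratic part is PSD since $C_{t+1,xx}\succeq0$. Hence the bracketed objective in the Bellman equation is a joint quadratic $W_t(\hat y_t,v_t)$ with control Hessian $W_{t,vv}=G_{t,uu}+\Dyn_{t,u}C_{t+1,xx}\Dyn_{t,u}^\top+\stepsize^{-1}\idm_p+(\text{PSD noise-coupling term})\succ0$, so the minimization in $v_t$ yields an affine law $v_t^*(\hat y_t)=K_t\hat y_t+k_t$, and $c_t(\hat y_t)=\min_{v_t}W_t(\hat y_t,v_t)$ is quadratic with leading block the Schur complement $C_{t,xx}=W_{t,yy}-W_{t,vy}^\top W_{t,vv}^{-1}W_{t,vy}$, where $W_{t,vy}=\Dyn_{t,u}C_{t+1,xx}\Dyn_{t,x}^\top$ exactly as in the deterministic case since the noise contribution does not couple $v_t$ and $\hat y_t$.

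Positive semidefiniteness of $C_{t,xx}$ then follows exactly as in Proposition~\ref{prop:LQR_comput}: write $C_{t+1,xx}=(C_{t+1,xx}^{1/2})^\top C_{t+1,xx}^{1/2}$ and apply the Sherman–Morrison–Woodbury identity to express $C_{t,xx}=H_{t,xx}+\Dyn_{t,x}(C_{t+1,xx}^{1/2})^\top\big(\idm+C_{t+1,xx}^{1/2}S_t(C_{t+1,xx}^{1/2})^\top\big)^{-1}C_{t+1,xx}^{1/2}\Dyn_{t,x}^\top\succeq0$ for an appropriate PSD matrix $S_t$, the additional noise-coupling terms only making the inner matrix ``more positive''; invertibility of $W_{t,vv}$ here rests on $G_{t,uu}\succ0$ and $\stepsize^{-1}>0$. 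Reading off $C_{t,xx},c_{t,x},K_t,k_t$ from this minimization recovers precisely the recursion of Algo.~\ref{algo:ILQG_step}, and discarding the constant (``variance'') terms such as $\tfrac12\Tr(\Dyn_{t,w}^\top C_{t+1,xx}\Dyn_{t,w})$ closes the induction. I expect the main obstacle to be the tensor bookkeeping around $\Ddyn_{t,u,w}$: recognizing it as a $v_t$-linear family of linear maps of $w_t$, expanding the Gaussian second moment into trace terms, and tracking how the quadratic-in-$v_t$ and linear-in-$v_t$ parts of that trace feed respectively into $W_{t,vv}$ and into the linear coefficient of $W_t$; once this is in place, the rest is a verbatim adaptation of the deterministic computation.
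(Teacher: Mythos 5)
Your proposal is correct and follows essentially the same route as the paper's proof: downward induction on the noisy Bellman recursion, evaluation of the Gaussian expectation via $\Expect[w]=0$ and $\Expect[ww^\top]=\idm_q$ to produce a trace term that contributes a linear-in-$v$ correction to $g_u$ and a PSD quadratic correction $\Theta$ to $G_{uu}$, followed by reduction to the deterministic computation of Prop.~\ref{prop:LQR_comput} with the Sherman--Morrison--Woodbury argument for $C_{t,xx}\succeq 0$. The only difference is notational — you package the noise map abstractly as $D_t(v_t)w_t$ where the paper expands $\Ddyn_{t,u,w}^\pi$ into its slices $\Psi_i$ — and your observation that $W_{t,ux}$ is unaffected by the noise matches the paper exactly.
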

\begin{proof}
The classical Bellman equation is replaced for problem~\eqref{eq:LQG_step} by
\begin{equation}\label{eq:cost-to-go_noisy}
\valuefunc_t(\hat y_t) = q_{h_t}(\hat y_t) + \min_{v_t}\left\{ q_{g_t}(v_t) + \frac{1}{2 \stepsize} \|v_t\|_2^2 + \Expect_{w_t}\left[ \valuefunc_{t+1}(\Dyn_{t,x}^\top y_t + \Dyn_{t,u}^\top v_t + \Dyn_{t, w}^\top w_t  + \Ddyn_{t,u, w}[ v_t, w_t, \cdot])\right]\right\}.
\end{equation}
where we denoted shortly $ q_{h_t}(\hat y_t) = q_{h_t}(x_t + y_t; x_t)$ and $q_{g_t}(v_t) = q_{g_t}(u_t+v_t; u_t)$.

The dynamic programming procedure is initialized by $c_{\horizon, x} = h_{\horizon,x}$, $C_{\horizon,xx} = H_{\horizon, xx} \succeq 0$. At iteration $t$, we seek to solve analytically the equation~\eqref{eq:cost-to-go_noisy}. For given $y$ and $v$. denoting with $'$ the quantities at time $t+1$ and omitting the index $t$ otherwise, the expectation in~\eqref{eq:cost-to-go_noisy} reads (recall that we supposed $w_t \sim \mathcal{N}(0,\id_q)$)
\begin{align*}
E = & \Expect_{w}\left[ c'(\Dyn_{x}^\top y + \Dyn_{u}^\top v + \Dyn_{w}^\top w  + \Ddyn_{u, w}[ v, w, \cdot])\right] \\
= & \Expect_{w}\Big[ {c_{x}'}^\top(\Dyn_{x}^\top y + \Dyn_{u}^\top v + \Dyn_{w}^\top w  + \Ddyn_{u, w}[ v, w, \cdot]) \\
& \phantom{\Expect_{w}\Big[} +  \frac{1}{2}(\Dyn_{x}^\top y + \Dyn_{u}^\top v + \Dyn_{w}^\top w  + \Ddyn_{u, w}[ v, w, \cdot])^\top {C'}_{xx}(\Dyn_{x}^\top y + \Dyn_{u}^\top v + \Dyn_{w}^\top w  + \Ddyn_{u, w}[ v, w, \cdot])\Big]  \\
= & {c_{x}'}^\top(\Dyn_{x}^\top y + \Dyn_{u}^\top v) + \frac{1}{2}(\Dyn_{x}^\top y + \Dyn_{u}^\top v)^\top {C'}_{xx}(\Dyn_{x}^\top y + \Dyn_{u}^\top v) \\
& + \Expect_{w}\Big[\frac{1}{2}(\Dyn_{w}^\top w  + \Ddyn_{u, w}[ v, w, \cdot])^\top {C'_{xx}}(\Dyn_{w}^\top w  + \Ddyn_{u, w}[ v, w, \cdot])\Big] 
\end{align*}
Now we have
\begin{align*}
E' = & \Expect_{w}\Big[\frac{1}{2}(\Dyn_{w}^\top w  + \Ddyn_{u, w}[ v, w, \cdot])^\top {C'_{xx}}(\Dyn_{w}^\top w  + \Ddyn_{u, w}[ v, w, \cdot])\Big] \\
= & \Expect_{w}\Big[\frac{1}{2}(\Dyn_{w}^\top w  + \Ddyn_{u, w}^\pi[ \cdot, v, w])^\top{C'_{xx}}(\Dyn_{w}^\top w + \Ddyn_{u, w}^\pi[ \cdot, v, w])\Big] \\
= & \Expect_{w}\Big[\frac{1}{2}w^\top(\Dyn_{w}^\top  + \Ddyn_{u, w}^\pi[ \cdot, v, \cdot])^\top{C'_{xx}}(\Dyn_{w}^\top + \Ddyn_{u, w}^\pi[ \cdot, v, \cdot]) w\Big] \\
= & \frac{1}{2}\Tr((\Dyn_{w}^\top  + \Ddyn_{u, w}^\pi[ \cdot, v, \cdot])^\top{C'_{xx}}(\Dyn_{w}^\top + \Ddyn_{u, w}^\pi[ \cdot, v, \cdot])) 
\end{align*}
where $\Ddyn_{u, w}^\pi = (\Psi_1, \ldots, \Psi_q) \in \reals^{d \times p \times q}$ denotes the shuffled tensor $\Ddyn_{u, w}$  as defined in Appendix~\ref{app:notations}. Therefore we have $\Ddyn_{u, w}^\pi[ \cdot, v, \cdot] = (\Psi_1v, \ldots, \Psi_qv) \in \reals^{d \times q}$. Now denoting $\Dyn_{w}^\top = (\psi_1, \ldots, \psi_q) \in \reals^{d \times q}$, we have 
\begin{align*}
E' = & \frac{1}{2} \Tr\left(C'_{xx}\left(\sum_{i=1}^{q} (\psi_i + \Psi_i v )(\psi_i + \Psi_i v )^\top \right)\right) 
=  \sum_{i=1}^{q} \frac{1}{2}(\psi_i + \Psi_i v)^\top C'_{xx} (\psi_i + \Psi_i v) 
=  \theta_0 + v^\top \theta + \frac{1}{2}v^\top \Theta v
\end{align*}
where 
\begin{align*}
\theta_0  = \frac{1}{2}\Tr(\Dyn_w C'_{xx} \Dyn_w^\top),  \qquad 
\theta  = \sum_{i=1}^q \Psi_i^\top C'_{xx} \psi_i, \qquad
\Theta = \sum_{i=1}^q \Psi_i^\top C'_{xx} \Psi_i \succeq 0.
\end{align*}
The computation of the cost-to-go function~\eqref{eq:cost-to-go_noisy} reads then (ignoring the constant terms such as $\theta_0$),
\begin{equation}
\valuefunc(\hat y) = h_x^\top y + \frac{1}{2} y^\top H_{xx} y+ \min_{v}\left\{ \tilde g_u^\top v + \frac{1}{2} v^\top \tilde G_{uu} v + \valuefunc'(\Dyn_{x}^\top y + \Dyn_{u}^\top v)\right\}.
\end{equation}
where 
\[
\tilde g_u = g_u+ \theta \qquad  \tilde G_{uu} = G_{uu} + \Theta + \stepsize^{-1}\idm_p
\]
The rest of the computations follow form Prop.~\ref{prop:LQR_comput}. The positive semi-definiteness of $C_{xx}$ is ensured since $\Theta \succeq 0$.
\end{proof}

\subsection{ILQG as in \citep{tassa2012synthesis}}
For completeness, we detail the implementation of ILQG steps as presented in~\citep{tassa2012synthesis} in Algo.~\ref{algo:ILQG_tassa_step}. The overall method consists in simply iterating these steps, i.e., starting from $\bar u_0$,
\[
\bar u_{k+1} = \texttt{iLQG\_step}(\bar u _k).
\]
This algorithm is the same as the  Differential Dynamic Programming algorithm (see below) except that second order information of the trajectory is not taken into account. We present a simple line-search as the one used for DDP, although more refined line-searches were proposed in~\citep{tassa2012synthesis}.

\subsection{Differential Dynamic Programming \citep{Tass14}} \label{sec:ddp}
For completeness, we present a detailed implementation of Differential Dynamic Programing (DDP) as presented in \citep{Tass14}. Note that several variants of the algorithms exist, especially in the implementation of line-searches, see e.g. \citep{Panto88}. A single step of the differential dynamic programming approach is described in Algo.~\ref{algo:DDP_step}. The overall algorithm simply consists in iterating that step as, starting from $\bar u_0$,
\[
\bar u_{k+1} = \texttt{DDP\_step}(\bar u _k).
\]

 We present the rationale behind the computations as explained in~\citep{Tass14} and precise the discrepancy between the motivation and the implementation. Formally, at a given command $\bar u$ with associated trajectory $\bar x = \tilde x (\bar u)$, the approach consists in approximating the cost-to-go functions as 
\begin{align}\label{eq:ddp_bellman}
\valuefunc_{t}( y) = & q_{h_t}(x_t+ y; x_t) + \min_{v} \left\{q_{g_t}(u_t +v; u_t) + q_{\valuefunc_{t+1} \circ \dyn_t}(x_t + y,u_t + v; x_t, u_t)\right\},
\end{align}
where for a function $f(y)$, $q_f(x + y; x) = f(x) + \nabla f(x)^\top y + y^\top \nabla^2 f(x) y/2$ denotes its second order approximation around $x$. They will take the following form, ignoring constant terms for the minimization,
\[
c_t(y) = \frac{1}{2}y^\top C_{t,xx} y +  c_{t,x}^\top y.
\]
The initial value function is an approximation of the last cost around the current point, i.e.
\[
c_\horizon(y) = \frac{1}{2}y^\top \nabla^2 h(x_\horizon) y +  \nabla h(x_\horizon)^\top y,
\]
where we identify $C_{\horizon,xx} = \nabla^2 h(x_\horizon)$, $c_{\horizon,x} = \nabla h(x_\horizon)$.
At time $t+1$ given an approximate value function $c_{t+1}$, step~\eqref{eq:ddp_bellman} involves computing a second order approximation around points $(x_t, u_t)$ of
\[
M_{t+1}(x,u) = h_t(x) + g_t(u) + c_{t+1}(\phi_{t}(x, u)).
\]
Denote $W_t(y,v) = q_{M_{t+1}}(x_t + y,u_t +v;x_t, u_t)$. We have, denoting $z = (y; v) \in \reals^{d + p}$,
\begin{align}
	W_t(y,v) = & \:q_{M_{t+1}}(x_t + y,u_t +v;x_t, u_t) \label{eq:ddp_formulation} \\
	= & \: M_{t+1}(x_t, u_t) + \nabla h_t (x_t)^\top y + \frac{1}{2}y^\top \nabla ^2 h_t(x_t) y + \nabla g_t(u_t)^\top v + \frac{1}{2} v^\top \nabla^2 g_t(u_t) v \nonumber\\ 
	& \: + \nabla c_{t+1}(\phi_t(x_t, u_t))^\top \nabla \phi_t(x_t, u_t)^\top z + \frac{1}{2}z^\top \nabla \phi_t(x_t, u_t) \nabla^2 c_{t+1}(\phi(x_t, u_t))\nabla \phi_t(x_t, u_t)^\top z \nonumber\\
	& \: + \frac{1}{2}\nabla^2 \phi(x_t, u_t) [z, z, \nabla c_{t+1}(\phi(x_t, u_t))], \nonumber
\end{align}
where $\nabla \phi_t(x_t, u_t)^\top z =  (\nabla_x \phi_t(x_t, u_t)^\top y+ \nabla_u \phi_t(x_t, u_t)^\top v) $, and 
\begin{align*}
\nabla^2 \phi(x_t, u_t) [z, z, \nabla c_{t+1}(\phi(x_t, u_t))]  = \: & \nabla_{xx}^2 \phi(x_t, u_t)[y, y, \nabla c_{t+1}(\phi(x_t, u_t))] \\
&  + \nabla_{uu}^2 \phi(x_t, u_t)[v, v, \nabla c_{t+1}(\phi(x_t, u_t))] \\
& + 2 \nabla_{ux}^2 \phi(x_t, u_t)[v, y, \nabla c_{t+1}(\phi(x_t, u_t))].
\end{align*}
By parameterizing $W_t(y,v)$ as
\begin{align*}
	W_t(y,v) & = w_{t,0} +  w_{t,x}^\top y +  w_{t,u}^\top v +  \frac{1}{2} y^\top W_{t,xx} y + \frac{1}{2}v^\top W_{t,uu} v + v^\top  W_{t,ux} y,
\end{align*}
we get after identification
\begin{align*}
\begin{array}{l}
w_{t,x} = h_{t,x} + \Dyn_{t,x} \tilde c_{t+1, x} \\
w_{t,u} = g_{t,u} + \Dyn_{t,u} \tilde c_{t+1, x} \nonumber
\end{array}
\qquad
\begin{array}{l}
W_{t,xx} = H_{t,xx} + \Dyn_{t,x} C_{t+1,xx}\Dyn_{t,x}^\top + \upphi_{t,xx} [\cdot ,\cdot ,\tilde c_{t+1, x}] \\
W_{t,ux} = \Dyn_{t,u} C_{t+1,xx}\Dyn_{t,x}^\top + \upphi_{t,ux} [\cdot ,\cdot ,\tilde c_{t+1, x}] \nonumber\\
W_{t,uu} = G_{t,uu} + \Dyn_{t,u} C_{t+1,xx}\Dyn_{t,u}^\top + \upphi_{t,uu}[\cdot ,\cdot ,\tilde c_{t+1, x}],
\end{array}
\end{align*}
where 
\begin{gather*}
h_{t, x} = \nabla h_t(x_t), \qquad H_{t,xx} = \nabla^2 h_t(x_t), \qquad g_{t, u} = \nabla g_t(u_t), \qquad G_{t,uu} = \nabla^2 g_t(u_t),  \\
\Dyn_{t,x}  = \nabla_x \phi_t(x_t, u_t), \qquad 
\Dyn_{t,u}  = \nabla_u \phi_t(x_t, u_t), \qquad
\\
\upphi_{t,xx}  = \nabla^2_{xx} \phi_t(x_t, u_t), \qquad 
\upphi_{t,uu}   = \nabla^2_{uu} \phi_t(x_t, u_t), \qquad 
\upphi_{t,ux}   = \nabla^2_{ux} \phi_t(x_t, u_t), \qquad \\
\tilde c_{t+1, x} = c_{t+1, x} + C_{t+1}\phi_t(x_t, u_t).
\end{gather*}
Here rather than using $\tilde c_{t+1, x}$ as advocated by the idea of approximating the Bellman equation around the current iterate, the implementation uses $c_{t+1}$.
To minimize the resulting function in $v$, one must ensure that $W_{t,uu}$ is invertible. This is done by adding a small regularization $\lambda$ such that $W_{t,uu} := W_{t,uu} + \lambda \idm_p \succ 0$ as presented in e.g. \citep{Panto88} and further explored in \citep{Tass14}.

After minimizing in  $v$, we get the new approximate value function to minimize
\[
c_t(y) = \frac{1}{2}y^\top  C_{t,xx} y +  c_{t,x}^\top y,
\]
with 
\begin{align*}
c_{t,x}  = w_{t,x} - W_{t,ux}^\top W_{t,uu} ^{-1} w_{t,u}, \qquad
C_{t,xx}  = W_{t,xx} - W_{t,ux}^\top W_{t,uu}^{-1} W_{t,ux}.
\end{align*}
Once the cost-to-go functions are computed, the next command is given by the solution of these approximated Bellman equations around the trajectory. Precisely, denote 
\[
v^*(y) = \argmin_{v} \left\{q_{g_t}(u_t +v; u_t) + q_{\valuefunc_{t+1} \circ \dyn_t}(x_t + y,u_t + v; x_t, u_t)\right\} = K_t y + k_t,
\]
where $k_t =-W_{t,uu}^{-1}w_{t,u}$ and $K_t=  -W_{t,uu}^{-1}W_{t,ux}y$.
The roll-out phase starts with $x_0^+ = \hat x_0$ and outputs the next command as
\begin{align}
	u_t^+ = u_t + v^*(x_t^+ -x_t) = u_t + K_t(x_t^+ -x_t) + k_t\qquad x_{t+1}^+ = \dyn_t(x_t^+, u_t^+).  \label{eq:ddp_rollout}
\end{align}
A line-search advocated in e.g.~\citep{Tass14} is to move along the direction given by the fixed gain $k_t$, i.e., the roll-out phase reads 
\begin{align*}
	u_t^+ = u_t + K_t(x_t^+ -x_t) + \alpha k_t\qquad x_{t+1}^+ = \dyn_t(x_t^+, u_t^+),
\end{align*}
where $\alpha$ is chosen such that the next iterate has a lower cost than the previous one, by a decreasing  line-search initialized at $\alpha=1$. More sophisticated line-searches were also proposed~\citep{Mayn66, Liao92}.

\begin{algorithm}[t]
	\caption{$\texttt{ILQR\_step}(\bar u, \stepsize)$. ILQR step \citep{Li04} ($\stepsize=\infty$) or regularized ILQR step with step-size $\stepsize$ [Sec.~\ref{sec:prox_lin}] on a command $\bar u$. \label{algo:ILQR_step}}
	\begin{algorithmic}[1]
		\Statex{\textbf{Inputs:} Command $\bar u = (u_0;\ldots, u_{\horizon-1})$, step-size $\stepsize$, initial state $\hat x_0$ convex twice differentiable costs $h_t$ for $t = 0,\ldots \horizon$ with $h_0=0$, convex twice differentiable  penalties $g_t$ for $t=0,\ldots, \horizon-1$
			differentiable	dynamics $\dyn_t$ for $t=0,\ldots, \horizon-1 $
		}
		\Statex{ \bf Forward pass:}
		\State{Set $x_0 = \hat x_0$}
		\For{$t= 0, \ldots, \horizon-1$}
		\State{Compute and store
		\begin{gather*}
		h_{t,x} = \nabla h_t(x_t), \quad H_{t,xx} = \nabla^2 h_t(x_t), \quad g_{t,u} = \nabla g_t(u_t), \quad G_{t,uu} = \nabla^2 g_t(u_t),  \\
		\Dyn_{t,x} = \nabla_x \dyn_t(x_t, u_t),\quad  \Dyn_{t,u} = \nabla_u \dyn_t(x_t, u_t).
		\end{gather*}
		}
	\State{Go to next state $x_{t+1} = \dyn_t(x_t, u_t) $}
		\EndFor
		\Statex{\bf Backward pass:}
		\State{Initialize $C_{\horizon,xx} = \nabla^2 h_\horizon(x_\horizon), c_{\horizon,x} = \nabla h_\horizon(x_\horizon)$ }
		\For{$t = \horizon-1,\ldots, 0$ }
		\State{
			Define
			\begin{align*}
			w_{t,x} & = h_{t,x} + \Dyn_{t,x} c_{t+1,x} & W_{t,xx} & = H_{t,xx} + \Dyn_{t,x} C_{t+1,xx}\Dyn_{t,x}^\top \\
			w_{t,u} & = g_{t,u} + \Dyn_{t,u} c_{t+1,x} & W_{t,uu} & = G_{t,uu} + \Dyn_{t,u} C_{t+1,xx}\Dyn_{t,u}^\top + \red{\stepsize^{-1}\idm_p} \\
			& & W_{t,ux} & = \Dyn_{t,u} C_{t+1,xx}\Dyn_{t,x}^\top 
			\end{align*}
		}
		\State{Compute
			\begin{align*}
			C_{t,xx} = W_{t,xx} - W_{t,ux}^\top W_{t,uu}^{-1} W_{t,ux} \qquad  \qquad
			c_{t,x}  = w_{t,x} - W_{t,ux}^\top W_{t,uu} ^{-1} w_{t,u}
			\end{align*}
		}
		\State{Store \begin{align*}
			K_t = - W_{t,uu}^{-1} W_{t,ux} \qquad \qquad
			k_t = - W_{t,uu}^{-1} w_{t,u}
			\end{align*}
		}
		\EndFor
		\Statex{\bf Roll-out pass:}
		\State{\textbf{Initialize} $ y_0 = 0$}
		\For{$t =0,\ldots,\horizon-1$}
		\begin{align*}
		v_t^* = K_t y_t + k_t \qquad \qquad
		y_{t+1} = \Dyn_{t,x}^\top y_t + \Dyn_{t,u}^\top v_t^*
		\end{align*}
		\EndFor
		\Ensure{$\texttt{ILQR\_step}(\bar u, \stepsize) = \bar v^*$ where $\bar v^* = (v_0^*;\ldots;v_{\horizon-1}^*)$ is optimal for~\eqref{eq:ILQR_regularized}}
	\end{algorithmic}
\end{algorithm}

\begin{algorithm}[t]
	\caption{$\texttt{ILQG\_step}(\bar u, \stepsize)$. ILQG step \citep{Todo05, Li07} ($\stepsize=\infty$) or regularized ILQG step with stepsize $\stepsize$ [Sec.~\ref{sec:prox_lin}] on a command $\bar u$. \label{algo:ILQG_step}}
	\begin{algorithmic}[1]
		\Statex{\textbf{Inputs:} Command $\bar u = (u_0;\ldots, u_{\horizon-1})$, step-size $\stepsize$, initial state $\hat x_0$ convex twice differentiable costs $h_t$ for $t = 0,\ldots \horizon$ with $h_0=0$, convex twice differentiable  penalties $g_t$ for $t=0,\ldots, \horizon-1$
			twice differentiable noisy	dynamics $\dyn_t$ for $t=0,\ldots, \horizon-1 $ as in~\eqref{eq:noisydynamic}.
		}
		\Statex{ \bf Forward pass:}
		\State{Set $x_0 = \hat x_0$}
		\For{$t= 0, \ldots, \horizon-1$}
		\State{Compute and store
			\begin{gather*}
			h_{t,x} = \nabla h_t(x_t), \quad H_{t,xx} = \nabla^2 h_t(x_t), \quad g_{t,u} = \nabla g_t(u_t), \quad G_{t,uu} = \nabla^2 g_t(u_t),  \\
			\Dyn_{t,x} = \nabla_x \dyn_t(x_t, u_t, 0),\quad  \Dyn_{t,u} = \nabla_u \dyn_t(x_t, u_t, 0), \quad \Dyn_{t,w} = \nabla_w \dyn_t(x_t, u_t, 0), \quad \Ddyn_{t, u, w} =\nabla_{u w}^2 \dyn_t(x_t, u_t, 0) 
			\end{gather*}
		}
		\State{Go to the exact next state $x_{t+1} = \dyn_t(x_t, u_t, 0) $}
		\EndFor
		\Statex{\bf Backward pass:}
		\State{Initialize $C_{\horizon,xx} = \nabla^2 h_\horizon(x_\horizon), c_{\horizon,x} = \nabla h_\horizon(x_\horizon)$ }
		\For{$t = \horizon-1,\ldots, 0$ }
		\State{
			Denoting $ (\psi_{t,1}, \ldots, \psi_{t,q}) = \Dyn_{t,w}^\top$ and $(\Psi_{t,1}, \ldots, \Psi_{t,q}) = \Ddyn_{t, u, w}^\pi$, define
			\begin{align*}
			w_{t,x} & = h_{t,x} + \Dyn_{t,x} c_{t+1,x} & W_{t,xx} & = H_{t,xx} + \Dyn_{t,x} C_{t+1,xx}\Dyn_{t,x}^\top \\
			w_{t,u} & = g_{t,u} + \Dyn_{t,u} c_{t+1,x} + \sum_{i=1}^q \Psi_{t,i}^\top C_{t+1,xx} \psi_{t,i} & W_{t,uu} & = G_{t,uu} + \Dyn_{t,u} C_{t+1,xx}\Dyn_{t,u}^\top + \sum_{i=1}^q \Psi_{t,i}^\top C_{t+1,xx} \Psi_{t,i} + \red{\stepsize^{-1}\idm_p} \\
			& & W_{t,ux} & = \Dyn_{t,u} C_{t+1,xx}\Dyn_{t,x}^\top 
			\end{align*}
		}
		\State{Compute
			\begin{align*}
			C_{t,xx} = W_{t,xx} - W_{t,ux}^\top W_{t,uu}^{-1} W_{t,ux} \qquad  \qquad
			c_{t,x}  = w_{t,x} - W_{t,ux}^\top W_{t,uu} ^{-1} w_{t,u}
			\end{align*}
		}
		\State{Store \begin{align*}
			K_t = - W_{t,uu}^{-1} W_{t,ux} \qquad \qquad
			k_t = - W_{t,uu}^{-1} w_{t,u}
			\end{align*}
		}
		\EndFor
		\Statex{\bf Roll-out pass:}
		\State{\textbf{Initialize} $ y_0 = 0$}
		\For{$t =0,\ldots,\horizon-1$}
		\begin{align*}
		v_t^* = K_t y_t + k_t \qquad \qquad
		y_{t+1} = \Dyn_{t,x}^\top y_t + \Dyn_{t,u}^\top v_t^*
		\end{align*}
		\EndFor
		\Ensure{$\texttt{ILQG\_step}(\bar u, \stepsize) = \bar v^*$ where $\bar v^* = (v_0^*;\ldots;v_{\horizon-1}^*)$ is optimal for~\eqref{eq:LQG_step}}
	\end{algorithmic}
\end{algorithm}

\begin{algorithm}[t]
	\caption{$\texttt{iLQG\_step}(\bar u)$ iLQG step  as presented in \citep{tassa2012synthesis} on a command $\bar u$ \label{algo:ILQG_tassa_step}}
	\begin{algorithmic}[1]
		\Require{Command $\bar u = (u_0;\ldots, u_{\horizon-1})$, step-size $\stepsize$, initial state $\hat x_0$ convex twice differentiable costs $h_t$ for $t = 0,\ldots \horizon$ with $h_0=0$, convex twice differentiable  penalties $g_t$ for $t=0,\ldots, \horizon-1$
			differentiable	dynamics $\dyn_t$ for $t=0,\ldots, \horizon-1 $, decreasing factor $\rho_-<1$, control objective $f$
		}
		\Statex{ \bf Forward pass:}
		\State{Set $x_0 = \hat x_0$}
		\For{$t= 0, \ldots, \horizon-1$}
		\State{Compute and store
			\begin{gather*}
			h_{t,x} = \nabla h_t(x_t), \quad H_{t,xx} = \nabla^2 h_t(x_t), \quad g_{t,u} = \nabla g_t(u_t), \quad G_{t,uu} = \nabla^2 g_t(u_t),  \\
			\Dyn_{t,x} = \nabla_x \dyn_t(x_t, u_t),\quad  \Dyn_{t,u} = \nabla_u \dyn_t(x_t, u_t).
			\end{gather*}
		}
		\State{Go to next state $x_{t+1} = \dyn_t(x_t, u_t) $}
		\EndFor
		\Statex{\bf Backward pass:}
		\State{Initialize $C_{\horizon,xx} = \nabla^2 h_\horizon(x_\horizon), c_{\horizon,x} = \nabla h_\horizon(x_\horizon)$ }
		\For{$t = \horizon-1,\ldots, 0$ }
		\State{
			Define
			\begin{align*}
			w_{t,x} & = h_{t,x} + \Dyn_{t,x} c_{t+1,x} & W_{t,xx} & = H_{t,xx} + \Dyn_{t,x} C_{t+1,xx}\Dyn_{t,x}^\top \\
			w_{t,u} & = g_{t,u} + \Dyn_{t,u} c_{t+1,x} & W_{t,uu} & = G_{t,uu} + \Dyn_{t,u} C_{t+1,xx}\Dyn_{t,u}^\top \\
			& & W_{t,ux} & = \Dyn_{t,u} C_{t+1,xx}\Dyn_{t,x}^\top 
			\end{align*}
		}
		\State{Compute
			\begin{align*}
			C_{t,xx} = W_{t,xx} - W_{t,ux}^\top W_{t,uu}^{-1} W_{t,ux} \qquad  \qquad
			c_{t,x}  = w_{t,x} - W_{t,ux}^\top W_{t,uu} ^{-1} w_{t,u}
			\end{align*}
		}
		\State{Store \begin{align*}
			K_t = - W_{t,uu}^{-1} W_{t,ux} \qquad \qquad
			k_t = - W_{t,uu}^{-1} w_{t,u}
			\end{align*}
		}
		\EndFor
		\Statex{\bf Roll-out pass:}
		\State{\textbf{Initialize} $ x_0^+ = \hat x_0$, $\alpha=1$}
		\Repeat{}
		\For{$t =0,\ldots,\horizon-1$}
		\begin{align*}
			u_t^+ = u_t + K_t (x_t^+-x_t)+ \alpha k_t \qquad \qquad
			\textcolor{puorange}{\bm{x_{t+1}^+ = \dyn_t(x_t^+, u_t^+)}}
		\end{align*}
		\EndFor
		\State{Update $\alpha = \rho_- \alpha$}
		\Until{$f(\bar u^+) \leq f(\bar u)$}
		\Ensure{$\texttt{iLQG\_step}(\bar u) = \bar u^+$ where $\bar u^+ = (u_0^+;\ldots;u_{\horizon-1}^+)$}
	\end{algorithmic}
\end{algorithm}

\begin{algorithm}[t]
	\caption{$\texttt{DDP\_step}(\bar u)$. Differential dynamic programming step on a command $\bar u$ \citep{Tass14}\label{algo:DDP_step} }
	\begin{algorithmic}[1]
		\Statex{\textbf{Hyper-parameters}: regularization $\lambda_0$, increasing regularization factor $\rho^+>1$, decreasing factor $\rho_-<1$}
		\Statex{\textbf{Inputs: }Command $\bar u = (u_0;\ldots; u_{\horizon-1})$, initial state $\hat x_0$ convex twice differentiable costs $h_t$ for $t = 0,\ldots \horizon$ with $h_0=0$, convex twice differentiable  penalties $g_t$ for $t=0,\ldots, \horizon-1$,
			twice differentiable	dynamics $\dyn_t$ for $t=0,\ldots, \horizon-1 $,
			control objective $f$
		}
		\Statex{ \bf Forward pass:}
		\State{Set $x_0 = \hat x_0$}
		\For{$t= 0, \ldots, \horizon$}
		\State{Compute and store
			\shortdisplay{
				\begin{gather*}
				h_{t, x} = \nabla h_t(x_t), \qquad H_{t,xx} = \nabla^2 h_t(x_t), \qquad g_{t, u} = \nabla g_t(u_t), \qquad G_{t,uu} = \nabla^2 g_t(u_t),  \\
				\Dyn_{t,x}  = \nabla_x \phi_t(x_t, u_t), \qquad 
				\Dyn_{t,u}  = \nabla_u \phi_t(x_t, u_t), \qquad
				\\
				\upphi_{t,xx}  = \nabla^2_{xx} \phi_t(x_t, u_t), \qquad 
				\upphi_{t,uu}   = \nabla^2_{uu} \phi_t(x_t, u_t), \qquad 
				\upphi_{t,ux}   = \nabla^2_{ux} \phi_t(x_t, u_t). \qquad
				\end{gather*}}
			\State{Go to next state $ x_{t+1} = \dyn_t(x_t,u_t)$}
		}
		\EndFor
		\State{Compute and store $h_{\horizon, x} = \nabla h(x_\horizon), \quad H_{\horizon, xx} = \nabla^2 h(x_\horizon) $}
		\Statex{\bf Backward pass:}
		\State{Initialize $C_{\horizon,xx} = H_{\horizon,xx}, c_{\horizon,x} = h_{\horizon,x}$}
		\For{$t = \horizon-1,\ldots, 0$ }
		\State{
			Compute
			\shortdisplay{
				\begin{align}
				\begin{array}{l}
				\\
				w_{t,x} = h_{t,x} + \Dyn_{t,x} c_{t+1,x} \\
				w_{t,u} = g_{t,u} + \Dyn_{t,u} c_{t+1,x} \nonumber
				\end{array}
				\qquad
				\begin{array}{l}
				W_{t,xx} = H_{t,xx} + \Dyn_{t,x} C_{t+1,xx}\Dyn_{t,x}^\top + \upphi_{t,xx} [\cdot ,\cdot ,c_{t+1,x}] \\
				W_{t,ux} = \Dyn_{t,u} C_{t+1,xx}\Dyn_{t,x}^\top + \upphi_{t,ux} [\cdot ,\cdot ,c_{t+1,x}] \nonumber\\
				W_{t,uu} = G_{t,uu} + \Dyn_{t,u} C_{t+1,xx}\Dyn_{t,u}^\top + \upphi_{t,uu}[\cdot ,\cdot ,c_{t+1,x}]
				\end{array}
				\end{align}}
		}
		\State{Set $\lambda =\lambda_0, \quad W_{t,uu}^0 = W_{t,uu}$}
		\While{$W_{t,uu} \nsucc 0$}
		\State{$W_{t,uu}:= W_{t,uu}^0 + \lambda \idm_p, \qquad \lambda = \rho^+ \lambda$}
		\EndWhile
		\State{Compute
			\shortdisplay{
				\begin{align*}
				c_{t,x}  = w_{t,x} - W_{t,ux}^\top W_{t,uu} ^{-1} w_{t,u}, \quad
				C_{t,xx}  = W_{t,xx} - W_{t,ux}^\top W_{t,uu}^{-1} W_{t,ux}.
				\end{align*}}
		}
		\State{Store $
			K_t = - W_{t,uu}^{-1} W_{t,ux} \quad
			k_t = - W_{t,uu}^{-1} w_{t,u}
			$.
		}
		\EndFor
		\Statex{\bf Roll-out pass:}
		\State{\textbf{Initialize} $ x_0^+ = \hat x_0$, $\alpha=1$}
		\Repeat{}
		\For{$t =0,\ldots,\horizon-1$}
		\begin{align*}
			u_t^+ = u_t + K_t (x_t^+-x_t)+ \alpha k_t \qquad \qquad
			\textcolor{puorange}{\bm{x_{t+1}^+ = \dyn_t(x_t^+, u_t^+)}}
		\end{align*}
		\EndFor
		\State{Update $\alpha = \rho_- \alpha$}
		\Until{$f(\bar u^+) \leq f(\bar u)$}	
		\Ensure{$\texttt{DDP\_step}(\bar u) = \bar u^+$} where $\bar u^+ = (u_0^+;\ldots;u_{\horizon-1}^+)$
	\end{algorithmic}
\end{algorithm}

\begin{algorithm}
\caption{ILQR\label{algo:ILQR} \citep{Li04}}
\begin{algorithmic}[1]
	\Require{Initial state $\hat x_0$, 	differentiable	dynamics $\dyn_t$ for $t=0,\ldots, \horizon-1 $,  convex twice differentiable costs $h_t$ for $t = 1,\ldots, \horizon$, convex twice differentiable  penalties $g_t$ for $t=0,\ldots, \horizon-1$,
, total cost $f$ on the trajectory as defined in~\eqref{eq:composite_pb}, initial command $\bar u_0$, number of iterations $K$}
	\For{$k =0,\ldots,K$}
	\State{Using Algo.~\ref{algo:ILQR_step}, compute
	$
	\bar v_k = \texttt{ILQR\_step}(\bar u_k, +\infty)
	$}
	\State{Find $\stepsize_k$ s.t. $ f(\bar u_k + \stepsize_k \bar v_k) < f(\bar u_k)$}
	\State{Set $\bar u_{k+1} = \bar u_k + \stepsize_k  \bar v_k$}
	\EndFor
	\Ensure{$\bar u^* = \bar u_K$}
\end{algorithmic}
\end{algorithm}

\begin{algorithm}
	\caption{Regularized ILQR\label{algo:ILQR_reg} as presented in Sec.~\ref{sec:prox_lin}}
	\begin{algorithmic}[1]
		\Require{Initial state $\hat x_0$, 	differentiable	dynamics $\dyn_t$ for $t=0,\ldots, \horizon-1 $,  convex twice differentiable costs $h_t$ for $t = 1,\ldots, \horizon$, convex twice differentiable  penalties $g_t$ for $t=0,\ldots, \horizon-1$,
			, total cost $f$ on the trajectory as defined in~\eqref{eq:composite_pb}, initial command $\bar u_0$, number of iterations $K$
		}
		\For{$k =0,\ldots,K$}
		\State{Find $\gamma_k$, such that 
			$
			\bar u_{k+1} = \bar u_k + \texttt{ILQR\_step}(\bar u_k, \stepsize_k)
			$ computed by Algo.~\ref{algo:ILQR_step}	satisfies
			\[
			f(\bar u_{k+1}) \leq f(\bar u_k) + \frac{1}{2\gamma_k} \|\bar u_k - \bar u_{k+1}\|_2^2
			\]
		}
		\EndFor
		\Ensure{$\bar u^* = \bar u_K$}
	\end{algorithmic}
\end{algorithm}

\begin{algorithm}
	\caption{ILQG\label{algo:ILQG} \citep{Todo05, Li07}}
	\begin{algorithmic}[1]
		\Require{Initial state $\hat x_0$, noisy twice	differentiable	dynamics $\dyn_t$ for $t=0,\ldots, \horizon-1 $,  convex twice differentiable costs $h_t$ for $t = 1,\ldots, \horizon$, convex twice differentiable  penalties $g_t$ for $t=0,\ldots, \horizon-1$,
			, total cost $f$ on the trajectory as defined in~\eqref{eq:composite_pb}, initial command $\bar u_0$, number of iterations $K$}
		\For{$k =0,\ldots,K$}
		\State{Using Algo.~\ref{algo:ILQG_step}, compute
			$
			\bar v_k = \texttt{ILQG\_step}(\bar u_k, +\infty)
			$}
		\State{Find $\stepsize_k$ s.t. $ f(\bar u_k + \stepsize_k \bar v_k) < f(\bar u_k)$}
		\State{Set $\bar u_{k+1} = \bar u_k + \stepsize_k  \bar v_k$}
		\EndFor
		\Ensure{$\bar u^* = \bar u_K$}
	\end{algorithmic}
\end{algorithm}

\begin{algorithm}
	\caption{Regularized ILQG\label{algo:ILQG_reg} as presented in Sec.~\ref{sec:prox_lin}}
	\begin{algorithmic}[1]
		\Require{Initial state $\hat x_0$, 	differentiable	dynamics $\dyn_t$ for $t=0,\ldots, \horizon-1 $,  convex twice differentiable costs $h_t$ for $t = 1,\ldots, \horizon$, convex twice differentiable  penalties $g_t$ for $t=0,\ldots, \horizon-1$,
			, total cost $f$ on the trajectory as defined in~\eqref{eq:composite_pb}, initial command $\bar u_0$, number of iterations $K$
		}
		\For{$k =0,\ldots,K$}
		\State{Find $\gamma_k$, such that 
			$
			\bar u_{k+1} = \bar u_k + \texttt{ILQG\_step}(\bar u_k, \stepsize_k)
			$ computed by Algo.~\ref{algo:ILQG_step}	satisfies
			\[
			f(\bar u_{k+1}) \leq f(\bar u_k) + \frac{1}{2\gamma_k} \|\bar u_k - \bar u_{k+1}\|_2^2
			\]
		}
		\EndFor
		\Ensure{$\bar u^* = \bar u_K$}
	\end{algorithmic}
\end{algorithm}
\clearpage

\section{Differential Dynamic Programming interpretation}\label{app:approx_dyn_prog}
A characteristic of Differential Dynamic Programming is that the update pass follows the original trajectory. This little difference makes it very different to the classical optimization schemes we presented so far. Though its convergence is often derived as a Newton's method,  it was shown that in practice it outperforms Newton's method~\citep{Liao91, Liao92}. We analyze it as an optimization procedure on the state variables using recursive model-minimization schemes.

\subsection{Approximate dynamic programming}
We consider problems in the last control variable since any optimal control problem can be written in the form~\eqref{eq:last_state_ctrl} by adding a dimension in the states. We write then Problem~\eqref{eq:last_state_ctrl} as a constrained problem of the form
\begin{align}
\begin{split}
\min_{\substack{x_\horizon \in D_\horizon}} \quad & h_\horizon(x_\horizon) 
\end{split}
\end{align}
where the constraint sets $D_t$ are defined recursively as 
\begin{align*}
D_0 & = \{x_0\} \\
D_{t+1} & = \{x_{t+1} : x_{t+1} = \phi_{t}(x_t,u_t), \: x_t \in D_t \:, u_t \in \reals^{p}\}, \qquad \mbox{for $t = 0,\ldots, \horizon-1$}.
\end{align*}

The approximate dynamic approach consists then as a nested sequence of subproblems that attempt to make an approximate step in the space of the last state. Formally, at a given iterate $\hat x_\horizon$ defined by $(\hat u_0, \ldots, \hat u_{\horizon-1})$, it considers a model-minimization step i.e.
\begin{equation}\label{eq:first_sub_pb}
\min_{z\in D_\horizon} V_\horizon(z) := m_{h_\horizon}(z; \hat x_\horizon) + \stepsize^{-1} d(z,\hat x_\horizon),
\end{equation}
where $m_{h_\horizon}(\cdot; \hat x_t)$ is a given model that approximates $h_\horizon$ around $\hat x_\horizon$, $d(\cdot, \hat x_\horizon)$  is a proximal term and $\stepsize$ is the step-size of the procedure.

Then the procedure consists in considering recursively model-minimizations steps of functions $V_t$, for $t=\horizon,\ldots,1$, where each model-minimization step introduces the minimization of a new value function $V_t$ on a simpler constraint space.

Formally, assume that at time $t$ the problem considered is
\begin{equation}
\min_{z\in D_t} V_t(z)
\end{equation}
for a given function $V_t$ and that one is given an initial point $\hat z_t \in D_t$ with associated sub-command $\hat v_0, \ldots, \hat v_{t-1} $ that defines states $\hat z_0, \ldots, \hat z_{t}$ as $\hat z_{s+1} = \phi_s(\hat z_s, \hat v_s)$ for $0 \leq s\leq t-1$ with $\hat z_0 = \hat x_0$. Then developing the constraint set, the problem reads
\begin{equation}
\min_{z\in D_{t-1}, v \in \reals^p} M_t(z,v): = V_t(\phi_{t-1}(z,v))
\end{equation}
a minimization step on this problem around the given initial point is
\begin{equation}
\min_{z\in D_{t-1}, v \in \reals^p} m_{M_t}(z, v ; \hat z_{t-1},\hat v_{t-1}) + \stepsize^{-1} d((z,v), (\hat z_{t-1},\hat v_{t-1})).
\end{equation}
Then this problem simplifies as
\begin{equation}
\min_{z\in D_{t-1}} V_{t-1}(z) := \min_{v\in \reals^p} m_{M_t}(z, v ; \hat z_{t-1}, \hat v_{t-1}) + \stepsize^{-1} d((z,v), (\hat z_{t-1}, \hat v_{t-1})),
\end{equation}
which defines the next problem. The initial point of this subproblem is chosen as $\hat z_{t-1}$ with associated subcommand $\hat v_0, \ldots, \hat v_{t-2}$.

The recursive algorithm is defined in Algo.~\ref{algo:approx_dyn_prog_recursion}. These use sub-trajectories defined by the dynamics and sub-commands. 
The way the stopping criterion and the step-sizes are chosen depend on the implementation just as the choices of the model $m$ and the proximal term $d$. The optimal command is tracked along the recursion to be output at the end.

\begin{algorithm}[t]
	\begin{algorithmic}[1]
		\State{\textbf{Inputs:}}
		
		\State{-Approximate model $m$, proximal term $d$, initial point $\hat x_0$}
		
		\State{-Time $t$, value function $V_t$}
		\State{-Dynamics $\phi_0,\ldots, \phi_{t}$, initial point $z_t^0\in D_t$ with associated subcommand $v_0^0, \ldots, v_{t-1}^0$ defining states $z_0^0, \ldots, z_{t}^0$ as $z_{s+1}^0 = \phi_s(z_s^0, v_s^0)$ for $0 \leq s\leq t-1$ with $z_0^0 = \hat x_0$}
		\State{-Step sizes $(\stepsize_{t}^k)_{k\in \mathbb{N}}$, stopping criterion $\delta_t:\mathbb{N} \times \reals^d \rightarrow \{0,1\}$}
		
		\If{$t=0$}
		\State{Return $z^* =  \hat x_0$}
		\Else{}
		
		\Repeat{ for $k = 1, \ldots$}
		\State{Denoting $M_t(z,v) = V_t(\phi_{t-1}(z,v))$ and $\hat z = z^{k-1}_{t-1}$, $\hat v = v^{k-1}_{t-1}$, define
			\begin{equation}\label{eq:value_function_algo}
			V_{t-1}(z) = \min_{v\in \reals^p} m_{M_t}(z, v ; \hat z, \hat v) + (\stepsize_{t}^k)^{-1} d((z,v), ( \hat z, \hat v))
			\end{equation}	
		}
		\State{	Find $z^k_{t-1}$ and its associated subcommand $v_0^k, \ldots, v_{t-2}^k$ and subtrajectory $z_0^k, \ldots, z_{t-2}^k$ using Algo.~\ref{algo:approx_dyn_prog_recursion} s.t.
			\[
			z^k_{t-1} \approx \argmin_{y\in D_{t-1}} V_{t-1}(y)
			\]
			fed with
			\begin{itemize}
				\item[-] same $m, d, \hat x_0$ 
				\item[-] time $t-1$, value function $V_{t-1}$
				\item[-] dynamics $\phi_0,\ldots, \phi_{t-1}$, initial point $z^{k-1}_{t-1}$ with associated subcommand $v_0^{k-1}, \ldots, v_{t-2}^{k-1}$
				\item[-] a strategy of step sizes $(\stepsize_{t-1}^k)_{k\in \mathbb{N}}$ and a stopping criterion $\delta_{t-1}:\mathbb{N} \times \reals^d \rightarrow \{0,1\}$
			\end{itemize}
		}
		\State{Compute 
			\begin{equation}\label{eq:opt_control_algo}
			v^k_{t-1} = \argmin_{v\in \reals^p} m_{M_t}(z^k, v ; \hat z, \hat v) + (\stepsize_{t}^k)^{-1} d((z^k,v), (\hat z, \hat v))
			\end{equation}
		}
		\State{Compute $z^k_t = \phi_{t-1}(z^k_{t-1}, v^{k}_{t-1})$}
		\Until{stopping criterion $\delta_t(k,z^k_t)$ is  met}
		\State{Return $z^k_t$ with its associated subcommand $v_0^k, \ldots, v_{t-1}^k$}
		\EndIf
		
		\caption{Approximate Dynamic Programming Recursion}
		\label{algo:approx_dyn_prog_recursion}	
	\end{algorithmic}
\end{algorithm}

The whole procedure instantiates iteratively~Algo.~\ref{algo:approx_dyn_prog_recursion} on~\eqref{eq:first_sub_pb} as presented in Algo.~\ref{algo:approx_dyn_prog}. Note that it is of potential interest to have a different model-minimization scheme for the outer loop and the inner recursive loop. 
\begin{algorithm}[t]
	\begin{algorithmic}
		\State{\textbf{Inputs:}}
		
		\State{-Cost function $h_\horizon$, outer approximate model $m$, outer proximal term $d$, inner approximate model $\tilde m$ and inner proximal term $\tilde d$}
		
		\State{-Dynamics $\phi_0,\ldots, \phi_{\horizon-1}$, initial point $x_\horizon^0\in D_\horizon$ with associated command $u_0^0, \ldots, u_{\horizon-1}^0$ defining states $x_0^0, \ldots, x_{\horizon}^0$ as $x_{t+1}^0 = \phi_t(x_t^0, u_t^0)$ for $0\leq t\leq \horizon-1$ with $x_0^0 = \hat x_0$}
		
		\State{-Step sizes $(\stepsize^k)_{k\in \mathbb{N}}$, stopping criterion $\delta:\mathbb{N} \times \reals^d \rightarrow \{0,1\}$}
		
		\Repeat{ for $k = 1, \ldots$}
		\State{
			Find $x_\horizon^k$ with associated command $u_0^k, \ldots, u_{\horizon-1}^k$ using Algo.~\ref{algo:approx_dyn_prog_recursion} to solve
			\begin{equation}
			x_\horizon^k \approx \argmin_{z\in D_\horizon} V_\horizon(z) := m_{h_\horizon}(z;  x_\horizon^{k-1}) + (\stepsize^k)^{-1} d(z, x_\horizon^{k-1})
			\end{equation}
		}
		\State{fed with
			
			\begin{itemize}
				\item[-] Inner approximate model $\tilde m$ and proximal term $\tilde d$, initial point $\hat x_0$
				\item[-] time $\horizon$, value function $V_{\horizon}$
				\item[-] initial point $x_\horizon^{k-1}$ with associated command $u_0^{k-1}, \ldots, u_{\horizon-1}^{k-1}$
				\item[-] a strategy of step sizes $(\stepsize_{\horizon}^k)_{k\in \mathbb{N}}$ and  a stopping criterion $\delta_{\horizon}:\mathbb{N} \times \reals^d \rightarrow \{0,1\}$
			\end{itemize}
		}		
		\Until{Stopping criterion $\delta(k, x^k_\horizon)$ is met}
		\Ensure{$x^k_\horizon$ with associated command $u_0^k, \ldots, u_{\horizon-1}^k$}
		\caption{Approximate Dynamic Programming}
		\label{algo:approx_dyn_prog}	
	\end{algorithmic}
\end{algorithm}

\subsection{Differential dynamic programming}
Differential dynamic programing is an approximate instance of the above algorithm where (i) one considers a second order approximation of the function to define the model $m$, (ii) one does not use a proximal term $d$, (iii) the stopping criterion is simply to stop after one iteration. 

Precisely, for a twice differentiable function $f$, on a point $\hat w$, we use
$
m_f(w;\hat w) = q_f(w; \hat w ) = f(\hat w) + \nabla f(\hat w)^\top (w-\hat w) + \frac{1}{2} (w-\hat w)^\top \nabla^2 f(\hat w) (w-\hat w)
$.
Notice that without additional assumption on the Hessian $\nabla^2 f(\hat w)$, $q_f(\cdot;\hat w)$ may be unbounded below, such that the model-minimization steps may be not well defined. The definition of the models $q_{M_t}$ in Eq.~\eqref{eq:value_function_algo} correspond to the computations in Eq.~\eqref{eq:ddp_formulation} that lead to the formulation of the cost-to-go functions $c_t$. The solutions output by the recursion in Eq.~\eqref{eq:opt_control_algo} correspond to the roll-out presented in Eq.~\eqref{eq:ddp_rollout}. Crucially, as in the classical DDP formulation, the output at the $t$\textsuperscript{th} time step in the roll-out phase (here when the recursion is unrolled line 13 in Algo.~\ref{algo:approx_dyn_prog_recursion}) is given by the true trajectory. 

Recall that the implementation differs from the motivation. The choice of using the un-shifted cost-to-go functions, i.e., choosing $c_{t+1}$ instead of $\tilde c_{t+1}$ as presented in Sec.~\ref{sec:ddp}, is not explained by our theoretical approach. 

The iLQG method as presented in~\cite{tassa2012synthesis} follows the same approach except that they use the quadratic models defined in a Levenberg-Marquardt steps for each model-minimization of the recursion.

\section{Regularized Gauss-Newton analysis}\label{app:prox_lin}
For completeness we recall how equality~\eqref{eq:explicit_GN} is obtained. As $h, g$ are quadratics, we have 
$h(\bar x+ \bar y) = q_h(\bar x+\bar y;\bar x), g(\bar u+\bar v) = q_g(\bar u +\bar v; \bar u)$. Therefore $c_f(\bar u + \bar v; \bar u) = q_f(\bar u + \bar v; \bar u)$ with $c_f$ defined in~\eqref{eq:cvx_model} and $q_f$ defined in \eqref{eq:quad_model}. The regularized Gauss-Newton step reads then, denoting $\bar x_k = \tilde x(\bar u_k)$, $H = \nabla^2h(\bar x_k)$ and $G = \nabla^2 g(\bar u_k)$
\begin{align*}
	\bar u_{k+1} & = \bar u_k + \argmin_{\bar v} q_f(\bar u_k + \bar v; \bar u_k) + \frac{1}{2\stepsize_k}\|\bar v\|_2^2 \\
	& =  \bar u_k 
	+ \argmin_{\bar v} \Big\{ \nabla h(\bar x_k)^\top (\nabla \tilde x(\bar u_k)^\top \bar v) 
	+ \frac{1}{2} (\nabla \tilde x(\bar u_k)^\top \bar v)^\top H (\nabla \tilde x(\bar u_k)^\top \bar v) \\
	& \phantom{= \bar u_k 
		+ \argmin_{\bar v} \Big\{} + \nabla g(\bar u_k)^\top \bar v + \frac{1}{2}\bar v^\top G \bar v
	+ \frac{1}{2\stepsize_k}\|\bar v\|_2^2 \Big\} \\
	&  = \bar u _k - (\nabla \targetfunc({\bar u}_k) H \nabla\targetfunc({\bar u}_k)^\top + G + \stepsize_k^{-1}\idm_{\horizon p})^{-1} (\nabla \tilde x(\bar u_k) \nabla h(\bar x_k) + \nabla g(\bar u_k)) \\
	& = \bar u_k - (\nabla \targetfunc({\bar u}_k) H \nabla\targetfunc({\bar u}_k)^\top + G + \stepsize_k^{-1}\idm_{\horizon p})^{-1} \nabla \optimobj ({\bar u}_k)
\end{align*} 
We prove the overall convergence of the regularized Gauss-Newton method under a sufficient decrease condition.
\proxlinconv*
\begin{proof}
	For $k \geq 0$,
	\begin{align*}
	f(\unknown_{k}) = c_f(\unknown_{k}; \unknown_{k}) &
	\stackrel{(\star)}{\geq} c_f(\unknown_{k+1}; \unknown_k)+ \frac{1}{\stepsize_k}\|\unknown_{k+1}-\unknown_k\|_2^2
	 \stackrel{\eqref{eq:suff_decrease}}{\geq} \optimobj(\unknown_{k+1}) + \frac{1}{2\gamma_k}\|\unknown_{k+1}-\unknown_k\|^2,
	\end{align*}
	where we used in $(\star)$ the definition of $\unknown_{k+1}$ and strong convexity of $\unknown\rightarrow c(\unknown;\unknown_k) + (2\stepsize_k)^{-1}\|\unknown-\unknown_k\|_2^2$. This ensures first that the iterates stay in the initial level set. Then, summing the inequality and taking the minimum gives 
	\[
	\min_{k=0,\ldots,N} \stepsize_k^{-1}\norm{\unknown_{k+1} - \unknown_{k}}^2 \leq \frac{2 (\optimobj(\unknown_0) - \optimobj^*) }{N+1} .
	\]
	Finally using~\eqref{eq:explicit_GN}, we get 
	\[
	\|\nabla \optimobj (\unknown_k)\|_2 \leq (\ell_{\targetfunc,S}^2L_h  + L_g + \stepsize_k^{-1})\|\unknown_{k+1} - \unknown_k\|_2.
	\]
	Plugging this in previous inequality and rearranging the terms give the result.
\end{proof}

Now we show how the model approximates the objective up to a quadratic error for exact dynamics
\suffdecrease*
\begin{proof}
	As $\targetfunc$ has continuous gradients, it is $\ell_{\targetfunc,C}$-Lipschitz continuous and has $L_{\targetfunc,C}$- Lipschitz gradients on $C \subset \reals^{\horizon p}$. Similarly $\ctrlobj$ is $L_\ctrlobj$-smooth and $\ell_{\ctrlobj,C'}$ on any compact set $C' \subset \reals^{\horizon d}$. Now on $C\subset \reals^{\horizon p}$, denote $B \subset \reals^{\horizon p}$ a ball centered at the origin that contains $C$ and $\rho$ its radius. Define $B' \subset \reals^{\horizon d}$ a ball centered at the origin of radius $2\rho \ell_{\targetfunc,C}$ and finally $C' = \targetfunc(C) + B'$ such that for any  $\unknown, \bar v \in C$, $\targetfunc(\unknown) + \nabla \targetfunc(\unknown)^\top (\bar v- \unknown) \in C'$. Then for any  $\unknown, \bar v \in C$, 
	\begin{align*}
	|\optimobj(\bar v) - c_f(\bar v; \unknown) | & = |h(\tilde x(\bar v)) - h\left(\tilde x(\bar u ) + \nabla \tilde x (\bar u)^\top (\bar v - \bar u) \right)| \\
	& \leq \ell_{\ctrlobj,C'} \|\tilde x(\bar v) - \tilde x(\bar u ) - \nabla \tilde x (\bar u)^\top (\bar v - \bar u)\|_2 \\
	& \leq \frac{\ell_{\ctrlobj,C'} L_{\tilde x,C}}{2} \|\bar v- \bar u\|_2^2
	\end{align*}
	where the last line uses $\tilde x(\bar v) = \tilde x(\bar u) + \int_{0}^{1}\nabla \tilde x(\bar u + s(\bar v -\bar u))^\top (\bar v -\bar u) ds $ and the smoothness of $\tilde x$ on $C$.
\end{proof}

Finally we precise a minimal step-size for which the sufficient decrease condition is ensured.
\minstep*
\begin{proof}
	Using \eqref{eq:explicit_GN}, 
	\[
	\|\unknown_{k+1} - \unknown_{k}\|_2 \leq \stepsize_k \|\nabla \optimobj(\unknown_{k})\|_2
	\]
	so for $\stepsize_k \leq \ell_{\optimobj, S}^{-1}$, $\|\unknown_{k+1} - \unknown_{k}\|_2 \leq 1$ and $\unknown_{k+1} \in C$. As $\unknown_k, \unknown_{k+1} \in C$ we have by \eqref{eq:quad_error}, 
	\[
	\optimobj(\unknown_{k+1}) \leq c(\unknown_{k+1};\unknown_{k}) + \frac{M_C}{2} \|\unknown_{k} -\unknown_{k+1}\|_2^2
	\]
	which is the sufficient decrease condition~\eqref{eq:suff_decrease} for $\stepsize_k \leq M_{C}^{-1}$. 
\end{proof}

We rigorously define the back-tracking line-search that supports Cor.~\ref{cor:line_search} in Algo.~\ref{algo:backtrack} and~\ref{algo:prox_lin_backtrack}.

\begin{algorithm}[t]
	\begin{algorithmic}
		\Require{Objective $f$ as in~\eqref{eq:composite_pb}, convex models $c_f$ as in~\eqref{eq:cvx_model}, point $\bar u$, step size $\stepsize>0$, regularized Gauss-Newton oracle $ \operatorname{GN}(\bar u; \stepsize) \triangleq \argmin_{\bar v \in \reals^{\horizon p}}c_f(\bar v;\bar u) +\frac{\stepsize^{-1}}{2} \|\bar v-\bar u\|_2^2$,  decreasing factor $\rho<1$}
		\While{$f(\operatorname{GN}(\bar u; \stepsize))  > c_{f}(\operatorname{GN}(\bar u; \stepsize);\bar u) + \frac{\stepsize}{2}\|\operatorname{GN}(\bar u; \stepsize) - \bar u\|_2^2$}
		\State{$\stepsize := \rho \stepsize$}			
		\EndWhile
		\Ensure{$\bar u_{+}  := \operatorname{GN}(\bar u; \stepsize), \stepsize_+ = \stepsize$}
		\caption{Line-search for regularized Gauss-Newton method $\mathcal{L}(\bar u,\stepsize) = (\bar u_{+}, \stepsize_+)$}
		\label{algo:backtrack}	
	\end{algorithmic}
\end{algorithm}

\begin{algorithm}[t]
	\begin{algorithmic}
		\Require{Objective $f$ as in~\eqref{eq:composite_pb}, convex models $c_f$ as in~\eqref{eq:cvx_model}, initial point $\bar u_0$, initial step size $\stepsize_{-1}$, accuracy $\epsilon$}
		\Repeat{ for $k = 0, \ldots $}
		\State{Compute $\bar u_{k+1}, \stepsize_{k} = \mathcal{L}(\bar u_k,\stepsize_{k-1})$ using Algo.~\ref{algo:backtrack} such that 
			\[
			f(\bar u_{k+1})  \leq c_{f}(\bar u_{k+1};\bar u_{k}) + \frac{\stepsize_k}{2}\|\bar u_{k+1} - \bar u\|_2^2
			\]
		}
		\Until{$\epsilon$-near stationarity, i.e., $\norm{\nabla f(\bar u_{k+1})} \leq \epsilon$  }
		\Ensure{$\bar u_{k+1}$}
		\caption{Regularized Gauss-Newton method with line-search}
		\label{algo:prox_lin_backtrack}	
	\end{algorithmic}
\end{algorithm}

\section{Accelerated Gauss-Newton}\label{app:acc_proxlin}
We detail the proof of convergence of the accelerated Gauss-Newton algorithm.
\acc*
\begin{proof}
	First part of the statement is ensured by taking the best of both steps. For the second part, note first that assumption~\eqref{eq:lower_bound} implies that the objective $f$ is convex as shown in Lemma 8.3 in~\citep{Drus16}. Now, at iteration $k\geq 1$, for any $\bar u$,
	\begin{align*}
		f(\bar u_k) & \stackrel{\eqref{eq:best_of}}{\leq} f(\bar w_k) \\
		 & \stackrel{\eqref{eq:suff_decrease_acc}}{\leq} c_f(\bar w_k; \bar y_k ) + \frac{\accstepsize^{-1}_k}{2}\|\bar w_k - \bar y_k\|_2^2 \\
		& \stackrel{(\star)}{\leq} c_f(\bar u; \bar y_k) + \frac{\accstepsize^{-1}_k}{2}(\|\bar u -\bar y_k\|_2^2 - \|\bar u -\bar w_k\|_2^2) \\
		& \stackrel{\eqref{eq:lower_bound}}{\leq} f(\bar u)+ \frac{\accstepsize^{-1}_k}{2}(\|\bar u -\bar y_k\|_2^2 - \|\bar u -\bar w_k\|_2^2),
	\end{align*}
	where $(\star)$ comes from strong convexity of
	$\bar u\rightarrow c_f(\bar u;\bar y_k) + \accstepsize_k^{-1}\|\bar u-\bar y_k\|_2^2/2$ and the fact that $\bar w_k$ minimizes it.
	Now choosing $\bar u = \alpha_k \bar u^* + (1-\alpha_k) \bar u_{k-1}$, such that $\bar u -\bar y_k = \alpha_k(\bar u^* - \bar z_{k-1})$  and $\bar u - \bar w_k = \alpha_k(\bar u^* - \bar z_k)$, we get by convexity of $f$, 
	\begin{align*}
		f(\bar u_k) \leq & \alpha_k f(\bar u^*) + (1-\alpha_k) f(\bar u_{k-1})  + \frac{\alpha_k^2\accstepsize^{-1}_k}{2}(\|\bar u^* - \bar z_{k-1}\|_2^2 - \|\bar u^* - \bar z_k\|_2^2).
	\end{align*}
	Subtracting $f^*$ on both sides and rearranging the terms, we get
	\begin{align*}
		f(\bar u_k) -f^* \leq &(1-\alpha_k) f(\bar u_{k-1} -f^*)  + \frac{\alpha_k^2\accstepsize^{-1}_k}{2}(\|\bar u^* - \bar z_{k-1}\|_2^2 - \|\bar u^* - \bar z_k\|_2^2).
	\end{align*}
For $k=1$, using that $\alpha_1=1$, we get
\[
\frac{\accstepsize_1}{\alpha_1^2}\left(f(\bar u_1) -f^*\right)   \leq  \frac{1}{2} \left( \|\bar u^* - \bar z_{0}\|_2^2 - \|\bar u^* - \bar z_1\|_2^2 \right).
\]
	For $k\geq 2$, using the definition of $\alpha_k$, i.e., that $(1- \alpha_k)/\alpha_k^2 = 1/\alpha_{k-1}^2$, we get
	\begin{flalign*}
	\frac{\accstepsize_k}{\alpha_k^2}\left(f(\bar u_k) -f^*\right)   \leq & \frac{\accstepsize_k}{\alpha_{k-1}^2}\left(f(\bar u_{k-1}) -f^*\right)   + \frac{1}{2} \left( \|\bar u^* - \bar z_{k-1}\|_2^2 - \|\bar u^* - \bar z_k\|_2^2 \right) \\
	\leq & \frac{\accstepsize_{k-1}}{\alpha_{k-1}^2}\left(f(\bar u_{k-1}) -f^*\right)   + \frac{1}{2} \left( \|\bar u^* - \bar z_{k-1}\|_2^2 - \|\bar u^* - \bar z_k\|_2^2 \right).
	\end{flalign*}
	Developing the recursion, we obtain
	\begin{align*}
	f(\bar u_k) -f^* \leq  \frac{\alpha_k^2 \delta_k^{-1}}{2} \|\bar u^* - \bar z_0\|_2^2  \leq  \frac{4\accstepsize^{-1}}{(k+1)^2} \|\bar u^* - \bar u_0\|_2^2,
	\end{align*}
	where $\accstepsize = \min_{k \in \{1,\ldots N\}} \accstepsize_k$ and we used the estimate on $\alpha_k$ provided in Lemma B.1 in~\citep{Paqu17}.
\end{proof}

\end{document}